\numberwithin{equation}{section}
\newtheorem{Theorem}{Theorem}[section]
\newtheorem{Proposition}[Theorem]{Proposition}
\newtheorem{Corollary}[Theorem]{Corollary}
\newtheorem{Lemma}[Theorem]{Lemma}
\newtheorem{Definition}[Theorem]{Definition}
\newtheorem{Note}[Theorem]{Note}
\def\Diff{\mathop{\rm Diff}\nolimits}
\def\det{\mathop{\rm det}\nolimits}
\def\log{\mathop{\rm log}\nolimits}
\def\cop{\mathop{\rm cop}\nolimits}
\newcommand{\Conv}{\mathop{\scalebox{1.5}{\raisebox{-0.2ex}{$\ast$}}}}
\newcommand{\Zb}{\mathbb{Z}}
\newcommand{\vp}{\varphi}
\newcommand{\Cb}{\mathbb{C}}
\newcommand{\Fa}{\mathfrak{a}}
\newcommand{\Fg}{\mathfrak{g}}
\newcommand{\Fh}{\mathfrak{h}}
\newcommand{\Fl}{\mathfrak{l}}
\newcommand{\Fd}{\mathfrak{d}}
\def\0b{{\bf 0}}
\def\Cb{{\mathbb C}}
\def\Rb{{\mathbb R}}
\def\Zb{{\mathbb Z}}
\newcommand{\bC}{\mathbb{C}}
\newcommand{\cH}{\mathcal{H}}
\newcommand{\cF}{\mathcal{F}}
\newcommand{\cG}{\mathcal{G}}
\newcommand{\cA}{\mathcal{A}}
\newcommand{\cR}{\mathcal{R}}
\newcommand{\cU}{\mathcal{U}}
\newcommand{\cJ}{\mathcal{J}}
\newcommand{\ttop}{{\small{\rm top}}}
\newcommand{\Bott}{{\small{\rm Bott}}}
\newcommand{\cw}{{\small{\rm c-w}}}
\def\vp{\varphi}
\def\vp{\varphi}
\def\ify{\infty}
\def\part{\partial}
\def\text{\hbox}
\def\Diff{\mathop{\rm Diff}\nolimits}
\def\acr{\vartriangleright\hspace{-4pt}\blacktriangleleft}
\def\ify{\infty}
\newcommand{\tlt}{\triangleleft}
\newcommand{\trt}{\triangleright}
\newcommand{\lag}{\langle}
\newcommand{\rag}{\rangle}
\def\cl{\hspace{-2pt}\blacktriangleright\hspace{-2.8pt} < \hspace{-2pt}}
\def\al{\hspace{-2pt}>\hspace{-2.8pt}\vartriangleleft \hspace{-2pt}}
\def\acl{\hspace{-2pt} \blacktriangleright\hspace{-2.8pt}\vartriangleleft \hspace{-2pt}}
\def\acr{\hspace{-2pt} \vartriangleright\hspace{-2.8pt}\blacktriangleleft \hspace{-2pt}}
\def\alsub{\hspace{0pt}>\hspace{-2.8pt}\vartriangleleft \hspace{0pt}}
\def\aclsub{\hspace{0pt} \blacktriangleright\hspace{-2.8pt}\vartriangleleft \hspace{0pt}}
\def\0D{\Delta^{(0)}}
\def\1D{\Delta^{(1)}}
\def\0b{{\bf 0}}
\def\build#1_#2^#3{\mathrel{
\mathop{\kern 0pt#1}\limits_{#2}^{#3}}}
\def\0D{\Delta^{(0)}}
\def\1D{\Delta^{(1)}}
\def\Db{\blacktriangledown}
\def\cop{\rm cop}
\def\dt{\left.\frac{d}{dt}\right|_{_{t=0}}}
\begin{document}
%%%%%%%%%%%%%%%%%%%%%%%%%%%%%%%%%%%%%%%%%%%%%%%
\author{Tao Yang}
 \address{Department of mathematics,  
The Ohio State University, 
Columbus, OH 43210, USA}
\email{yang.1204@osu.edu}

\title[EHCCBP]{Explicit Realization of Hopf Cyclic Cohomology Classes of Bicrossed Product Hopf Algebras}

%\date{ \today }
\keywords{Hopf algebras; Hopf cyclic cohomology; Bicrossed Product; Lie algebra cohomology}

\thanks{The author would like to give special thanks to Henri Moscovici.}

%%%%%%%%%%%%%%%%%%%%%%%%%%%%%%%%%%%
\begin{abstract}

% !TEX root = ./EHCCBP.tex

We construct a Hopf action, with an invariant trace, of a bicrossed product Hopf algebra $\cH=\big( \cU(\Fg_1)   \acr  \cR(G_2) \big)^{\cop}$ constructed from a matched pair of Lie groups $G_1$ and $G_2$, on a convolution algebra $\cA=C_c^{\ify}(G_1)\rtimes G_2^{\delta}$. We give an explicit way to construct Hopf cyclic cohomology classes of our Hopf algebra $\cH$ and then realize these classes in terms of explicit representative cocycles in the cyclic cohomology of the convolution algebra $\cA$.

\end{abstract}

\maketitle

\setcounter{tocdepth}{1}
\tableofcontents

\begingroup
\let\clearpage\relax

% !TEX root = ./EHCCBP.tex
%\maketitle\section{}

\section{Introduction}

The geometric Hopf algebras and their cyclic cohomology first appeared as a new geometric tool in the work \cite{connes_hopf_1998} of Connes and Moscovici on the local index formula for transversely hypoelliptic operators on foliations. They constructed a canonical isomorphism \cite[Thm.~11]{connes_hopf_1998} from the Gelfand-Fuks cohomology of the Lie algebra of formal vector fields on $\Rb_n$ (\cite{gelfand_cohomology_1970}) to the Hopf cyclic cohomology of the Hopf algebra $\cH(n)$ associated to the pseudogroup of local diffeomorphisms of $\Rb_n$. In this way they proved that the local index formula computes characteristic classes of foliations (\cite{bott_characteristic_1972}). A detailed account of the above isomorphism, based on the bicrossed product decomposition of $\cH(n)$, was provided by Moscovici and Rangipour (\cite{moscovici_hopf_2011}), see the diagram from \cite[3.61]{moscovici_hopf_2011}: \\

\begin{equation*} 
\begin{tikzpicture}[description/.style={fill=white,inner sep=2pt}]
\matrix (m) [matrix of math nodes, row sep=3em, column sep=3em, 
text height=1.5ex, text depth=0.25ex]
{  C^{\bullet}_{\ttop}(\Fa_n)& C_{\cw}^{\bullet,\bullet} ( \Fg^{\ast}, \cF)  & C_{\Bott}^{\bullet}  \big( \Omega_{\bullet} ( G     ), \Gamma \big)  \\
      & C^{\bullet}(\cH(n), \Cb_{\delta})  &  C^{\bullet}(C_c^{\ify}(G)\rtimes \Gamma )  \\  };
  
\path[transform canvas={yshift=0ex},->,font=\scriptsize]
(m-1-1) edge node[above] {$ \mathcal{E} $} (m-1-2)  (m-1-2) edge node[above] {$ \Theta $} (m-1-3)  
  (m-1-3) edge node[left] {$ \Phi_{}$} (m-2-3) (m-2-2) edge  (m-1-2) ;

\path[bend left,->]
(m-1-1) edge node [above] {$ \mathcal{D} $} (m-1-3);

\end{tikzpicture} 
\end{equation*}
where $G=\Rb^{n} \rtimes GL(n)$; $\Gamma$ is a subgroup of $\text{Diff}(n)$, diffeomorphisms group of $\Rb^{n}$.
\newline
Recently, Moscovici gave an explicit description of a basis of Hopf cyclic characteristic classes of $\cH(n)$ (\cite{moscovici_geometric_2015}) in the spirit of Chern-Weil theory. He showed that the image of $\mathcal{D}$ forms a subcomplex of the Bott complex $C_{\Bott}^{\bullet}  \big( \Omega_{\bullet} (G), \Gamma \big) $ from diagram above, which is quasi-isomorphic to Hopf cyclic via a restriction of Connes' $\Phi$ map. That is  \label{diagram0}

\begin{equation*} 
\begin{tikzpicture}[description/.style={fill=white,inner sep=2pt}]
\small\matrix (m) [matrix of math nodes, row sep=2.2em, column sep=1.8em, 
text height=0.8ex, text depth=0.2ex] 
{  C^{\bullet}_{\ttop}(\Fa_n)& C_{\cw}^{\bullet,\bullet} ( \Fg^{\ast}, \cF)  & Im (\mathcal{D}) &\\
      & C^{\bullet}(\cH(n), \Cb_{\delta})  &  Im ( \lambda^{} ) &C^{\bullet}(C_c^{\ify}(G)\rtimes \text{Diff}(n)) & C^{\bullet}(C_c^{\ify}(G)\rtimes \Gamma) \\  };
  
\path[transform canvas={yshift=0ex},->,font=\scriptsize]
(m-1-1) edge node[above] {$ \mathcal{E} $} (m-1-2)  (m-1-2) edge node[above] {$ \Theta $} (m-1-3)  
  (m-1-3) edge node[left] {$ \Phi_{d}$} (m-2-3)  (m-2-2) edge node[below] {$ \cong $} node [above]{$ \lambda^{}$} (m-2-3) (m-2-4) edge   (m-2-5);

\path[ultra thick,bend left,->,transform canvas={yshift=0.5ex},font=\scriptsize]
(m-1-1) edge node [above] {$ \mathcal{D} $} (m-1-3);

\path[right hook->,font=\scriptsize]
(m-2-3) edge   (m-2-4);

\end{tikzpicture} 
\end{equation*}
This completes the description of relationship between the Lie algebra cohomology of $\Fa_n$, the Hopf cyclic cohomology of the Hopf algebra $\cH(n)$ and the cyclic cohomology of $C_c^{\ify}(G)\rtimes \text{Diff}(n)$ and $C_c^{\ify}( G )\rtimes \Gamma$. Note that the cohomology classes of the Lie algebra of formal vector fields are sophisticated, the calculation above is carried out by $\mathcal{D} $ map using simplicial connection, instead of calculating $ \mathcal{E} $ and $\Theta$ explicitly.\\
\newline
In this paper we are implementing a similar procedure for a bicrossed product Hopf algebra associated to a matched pair of Lie groups. For a matched pair of Lie groups $G_1, G_2$, with corresponding bicrossed product Lie algebra $\Fg_1 \bowtie \Fg_2$, one can also construct a bicrossed product Hopf algebra $\cR(G_2)  \acl \cU(\Fg_1) $. Rangipour and S\"{u}tl\"{u} (\cite{rangipour_van_2012}) showed that the Hopf cyclic cohomology of the Hopf algebra $\cR(G_2)  \acl \cU(\Fg_1) $ is canonically isomorphic via a van Est type quasi-isomorphism to the Lie algebra cohomology of $\Fg_1 \bowtie \Fg_2$ relative to a certain Lie subalgebra $\Fh_2$, as in the following diagram (both arrows are quasi-isomorphisms):
  
\begin{equation} \label{diagram1}
\begin{tikzpicture}[description/.style={fill=white,inner sep=2pt}]
\matrix (m) [matrix of math nodes, row sep=3em, column sep=3em, 
text height=1.5ex, text depth=0.25ex]
{   C(\Fg_1 \bowtie \Fg_2,  \Fh_2)& C^{\bullet,\bullet}_{}( \Fg_1^{\ast} , \cR_2)   \\
      & C^{\bullet}\big(\cR(G_2)  \acl \cU(\Fg_1),  ^{\sigma^{-1}}\hspace{-3pt} \Cb_{\delta}  \big)   \\  };
  
\path[transform canvas={yshift=0ex},->,font=\scriptsize]
(m-1-2) edge node[above] {$ \nu $} (m-1-1)  ;

\path[transform canvas={xshift=0ex},->,font=\scriptsize]
 (m-2-2) edge  node[right] {$ \mathcal{J} $} (m-1-2)  ;

\end{tikzpicture} 
\end{equation}

In their work, a quasi-isomorphism from the complex $C^{\bullet}\big(\cR(G_2)  \acl \cU(\Fg_1),  ^{\sigma^{-1}}\hspace{-3pt} \Cb_{\delta} \big)$ of $\cR(G_2)  \acl \cU(\Fg_1)$ with coefficient in some module comodule $ ^{\sigma^{-1}}\hspace{-3pt} \Cb_{\delta}$ to the Lie algebra cohomology complex of $(\Fg_1 \bowtie \Fg_2,\Fh_2,M)$ is constructed. However, in order to write down the Hopf cyclic classes explicitly, one would need the precise expression of the map from Lie algebra cohomology complex to the cyclic complex of Hopf algebra. 

We invert the map $\nu$, following the integration along simplices in $G/K$ map from the Lie algebra complex to the continuous group complex as in~\cite{dupont_simplicial_1976} and~\cite{van_est_group_1953}. 

Instead of inverting $\mathcal{J}$, we find a Hopf action~(\ref{injectivehomomorphism}), with an invariant trace, of it on the convolution algebra $\cA=C_c^{\ify}(G_1)\rtimes G_2^{\delta}$ ($\delta$ for the  discrete topology), as well as the convolution algebra $\cA_{\Gamma}=C_c^{\ify}(G_1)\rtimes \Gamma$ for any discrete subgroup $\Gamma$ of $ G_2$. Next we use Connes' $\Phi$ map to go directly to the cyclic cohomology of the algebra. In this case the image of $\Phi$ sits inside the range of the characteristic map from the Hopf cyclic cohomology. \\
\newline
Compared with the work of Moscovici's (\cite{moscovici_geometric_2015}), unlike the Lie algebra of formal vector fields whose cohomology classes are sophisticated, we take the advantage that for finite dimensional Lie algebra one can represent the cohomology classes by invariant forms explicitly. This way we can use $ \mathcal{E} $ and $\Theta$ directly and write down the explicit Hopf cyclic classes of our Hopf algebra $\cH=\big(\cR(G_2)  \acl \cU(\Fg_1)\big)^{\cop}$ in terms of representative cocycles on the convolution algebra, as illustrated by the following diagram: \label{diagram2}

\begin{equation} \label{diagram2}
\begin{tikzpicture}[description/.style={fill=white,inner sep=2pt}]
\small\matrix (m) [matrix of math nodes, row sep=2em, column sep=2em, 
text height=1.5ex, text depth=0.25ex]
{  C(\Fg_1 \bowtie \Fg_2,  \Fh_2)& C^{p}_{\cR}(G_2,\wedge ^{q} \Fg_1^{\ast}) & D^{p,q}    \\
       & C^{\bullet}(\cH, ^{\sigma}\Cb_{\delta})  &  Im ( \lambda^{} ) &C^{\bullet}(\cA_{}) &  C^{\bullet}(\cA_{\Gamma}) \\  };
  
\path[ultra thick,transform canvas={yshift=0ex},->,font=\scriptsize]
(m-1-1) edge node[above] {$ \mathcal{E} $} (m-1-2)  (m-1-2) edge node[above] {$ \Theta $} (m-1-3)  ;
\path[transform canvas={yshift=0ex},->,font=\scriptsize]
  (m-1-3) edge node[left] {$ \Phi_{D}$} (m-2-3)   (m-2-2) edge node[below] {$ \cong $} node [above]{$ \lambda^{}$} (m-2-3)  (m-2-4) edge   (m-2-5);

\path[bend left,->]
(m-1-1) edge node [above] {$ \mathcal{D} $} (m-1-3);

\path[right hook->]
(m-2-3) edge   (m-2-4);

\end{tikzpicture} 
\end{equation}
This amounts to an indirect way of inverting maps in diagram \ref{diagram1}, and be used to give a explicit description of Hopf cyclic classes of $\cH=\big(\cR(G_2)  \acl \cU(\Fg_1)\big)^{\cop}$. Let us known that the Hopf algebra $\big(\cR(G_2)  \acl \cU(\Fg_1)\big)^{\cop}$ actually acts on $\cA_{\Gamma}=C_c^{\ify}(G_1)\rtimes \Gamma$ as well for any discrete subgroup $\Gamma \subset G_2$, so we can complete the diagram with the very bottom right corner. \\
\\
The paper is organized as follows. In \autoref{sec:preliminaries} we introduce some background material. 
In \autoref{sec:action} we construct the bicrossed product Hopf algebra 
\newline
$\cR(G_2)  \acl \cU(\Fg_1) $. We show that the induced morphism on (Theorem~\ref{injectivehomomorphism}) Hopf action of $\big(\cR(G_2)  \acl \cU(\Fg_1)\big)^{\cop}$ on a convolution algebra $\cA=C_c^{\ify}(G_1)\rtimes G_2^{\delta}$.

We introduce differentiable and representative cohomologies of action groupoid and the complexes that calculate them in \autoref{sec:groupoid}. We also explain the construction of $D^{p,q} $ as a smaller subcomplex of the previous complexes. At the end of \autoref{sec:groupoid} we give an account of Connes' $\Phi$ map in \autoref{sec:groupoid}, whose restriction $\Phi_D$ on $D^{p,q} $ is going to be the quasi-isomorphism we need.

At the beginning of \autoref{sec:cyclic} we review the maps appear in the work of Rangipour and S\"{u}tl\"{u} (\cite{rangipour_van_2012}) as listed in diagram~\ref{diagram1} under our setting ($^{\sigma^{-1}}\hspace{-3pt}M_{\delta}=^{\sigma^{-1}}\hspace{-3pt}\Cb_{\delta}$). Then we describe the maps $\mathcal{E}$, $\mathcal{D}$, $\Theta$ in the top row of the diagram above and show that all of them are quasi-isomorphisms in the second section. At the end of \autoref{sec:cyclic} we connect the maps from \autoref{sec:action} to \autoref{sec:cyclic} and prove theorem~\ref{mthm1} that gives the same result of Rangipour and S\"{u}tl\"{u} (\cite[Thm.~4.10, Cor.~4.11]{rangipour_van_2012}) but from a different direction with explicit formulas. 

Finally we give an example calculation of transition from Lie algebra cohomology classes as explicit invariant forms on a 4 dimensional Lie group to cyclic cochains on the corresponding convolution algebra $\cA$ in \autoref{sec:examples}.

% !TEX root = ./EHCCBP.tex
%\maketitle\section{}

\section{Preliminaries and notations}
\label{sec:preliminaries}
In this chapter we provide background material that will be needed. We first recall the definitions of matched pair of Lie groups and Lie algebras, and then the bicrossed product Hopf algebra constructions. Most of the material is taken from~\cite{kassel_quantum_1995,majid_foundations_2000,connes_noncommutative_1994}.

\subsection{Matched pair of Lie groups and Lie algebras}

First, we would like to define a matched pair of Lie groups and a matched pair of Lie algebras. The definitions are given in Takeuchi's paper \cite{takeuchi_matched_1981}, Majid's paper \cite{majid_matched_1990} and book \cite{majid_foundations_2000}: 
\begin{Definition}\cite{takeuchi_matched_1981,majid_matched_1990,majid_foundations_2000}
Two Lie groups $(G_1,G_2)$ are a matched pair if they act on each other and the left action $\triangleright$ of $G_2$ on $G_1$, the right action $\triangleleft$ of $G_1$ on $G_2$, obey the conditions:
\begin{align}
\begin{aligned}
\forall \ \psi_{1}, \psi_{2}\in G_2,\quad \vp_1, \vp_2 &\in  G_1,\qquad \psi_1 \triangleright e = e,\quad e \triangleleft \vp_1=e,\\
\psi_1 \triangleright (\vp_1 \vp_2)&=(\psi_1 \triangleright \vp_1)\big((\psi_1 \triangleleft \vp_1)\triangleright \vp_2\big),\\
(\psi_1 \psi_2) \triangleleft \vp_1&=\big(\psi_1 \triangleleft(\psi_2 \triangleright \vp_1)\big)(\psi_2 \triangleleft \vp_1).
\end{aligned}
\end{align}
\end{Definition}
One example would be a matched pair of Lie subgroups from a decomposition of a Lie group. Let $G=G_1G_2$ as sets, $G_1\cap G_2={e}$, one defines the actions to be:
\begin{align}
\psi \vp = (\psi \triangleright \vp) (\psi \triangleleft \vp), \qquad \text{for}\ \psi \in G_2 \ \text{and} \ \vp \in G_1,
\end{align}
and check that $\triangleright$ and $\triangleleft$ are matched actions.

Differentiate these Lie groups we get a infinitesimal version: matched pair of Lie algebras:
\begin{Definition}\cite{majid_matched_1990,majid_foundations_2000}
Two Lie algebras $(\Fg_1,\Fg_2)$ are a matched pair if they act on each other and the left action $\triangleright$ of $\Fg_2$ on $\Fg_1$, the right action $\triangleleft$ of $\Fg_1$ on $\Fg_2$, obey the conditions:
\begin{align}\label{lraction}
\begin{aligned}
&[X_1,X_2] \trt Y =X_1 \trt ( X_2 \trt Y ) - X_2 \trt ( X_1 \trt Y ), \\
&X \tlt [Y_1, Y_2] =(X \tlt Y_1) \tlt Y_2 - (X \tlt Y_2) \tlt Y_1 , \\
& X \trt [Y_1, Y_2] = [X \tlt Y_1, Y_2]  +  [Y_1, X \trt Y_2] +( X \tlt Y_1 )\trt Y_2 - ( X \tlt Y_2 )\trt Y_1, \\
&[X_1,X_2] \tlt Y =[X_1 \tlt Y,X_2] + [X_1,X_2 \tlt Y] + X_1 \tlt (X_2 \trt Y) - X_2 \tlt (X_1 \trt Y).
\end{aligned}
\end{align}
\end{Definition}
Similar to the group decomposition example, one would get a matched pair of Lie subalgebras from a decomposition of a Lie algebra. Let $\Fg=\Fg_1 \oplus \Fg_2$ as vector spaces, one defines the actions to be:
\begin{align}
[Y,X] = Y \trt X + Y \tlt X, \qquad \text{for}\ Y \in \Fg_2 \ \text{and} \ X \in \Fg_1,
\end{align}
and check that $\triangleright$ and $\triangleleft$ are matched actions.

One can also assemble a match pair of Lie groups or Lie algebras to one Lie group or Lie algebra. We will introduce this when needed (cf.~\ref{assemble}).

\subsection{Bicrossed product Hopf algebras}
We refer the complete argument to \cite{majid_foundations_2000}.

We will use Sweedler notation \cite{sweedler_hopf_1969} to denote a comultiplication by $\Delta (c) = c_{(1)} \otimes c_{(2)}$, omitting the summation.\\

\begin{Definition}
Let $C$ be a coalgebra. A left $C$-comodule is a pair $(N, \Delta_{N})$ where $N$ is a vector space and $\Delta_{N}) :N \to C \otimes N$ is a linear map, called the coaction of $C$ on $N$, such that the following diagrams commute:

\begin{equation*}
\begin{tikzpicture}[description/.style={fill=white,inner sep=2pt}]
\matrix (m) [matrix of math nodes, row sep=3em, column sep=3em, 
text height=1.5ex, text depth=0.25ex]
{      N & C \otimes N   &  \Cb \otimes N  & C \otimes N   \\
       C \otimes N & C \otimes C \otimes N & & N\\  };
  
\path[transform canvas={yshift=0ex},->,font=\scriptsize]
   (m-1-1) edge node[above] {$ \Delta_{N} $} (m-1-2)   (m-1-4) edge node[above] {$ \varepsilon \otimes Id $} (m-1-3)  
  (m-2-1) edge node[above] {$\Delta \otimes Id$} (m-2-2)   (m-2-4) edge node[below] {$ \cong $} (m-1-3)  (m-2-4) edge node[right] {$ \Delta_{N} $} (m-1-4)   ;

\path[transform canvas={xshift=0ex},->,font=\scriptsize]
 (m-1-1) edge node[right]{$\Delta_{N}$} (m-2-1)   (m-1-2) edge  node[right]{$Id \otimes \Delta_{N} $} (m-2-2)   ;

\end{tikzpicture} 
\end{equation*}
\end{Definition}

We would also use Sweedler notation to denote a left coaction by $\Delta_{N} (c) = c_{<-1>} \otimes c_{<0>}$. Similarly we will denote right coaction of a coalgebra on a comodule $N$ by $\Delta_{N} (c) = c_{<0>} \otimes c_{<1>}$. One shall take advantage of the signs in this notation as $c_{<0>}$ always stay in the same vector space $N$ while $c_{<-1>},c_{<1>}$ are in the coalgebra $C$, $-$,$+$ signs tell the left/right direction of the coaction. \\

More detailed information on coalgebras, Hopf algebras and their comodules can be found in \cite{kassel_quantum_1995}

\begin{Definition}
When a Hopf algebra $\cH$ acts on an algebra $A$ from the left, we say that $A$ is a left $\cH-$module algebra if  
\begin{align*}
& h  \trt (ab)  = (h_{(1)}\trt a)(h_{(2)} \trt b), \\
&h \trt (1_{A}) =\varepsilon(h ) 1_{A}.
\end{align*}
Then we can form a left cross product algebra $A \al \cH $ built on vector space $A \otimes \cH $ with the product:
\begin{align*}
&(a \al h)(b \al g)= a (h_{(1)} \trt b)\otimes   h_{(2)}  g, \qquad a,b \in A, \quad h,g \in \cH\\
&1_{A \alsub \cH } =1_{A} \otimes 1_{\cH}.
\end{align*} 
\end{Definition}

\begin{Definition}
Let $\cH$ be a Hopf algebra. Suppose a coalgebra $C$ is a right $\cH-$comodule with coaction 
\begin{align*}
\Db (c) = c_{<0>} \otimes c_{<1>},
\end{align*} 
then $C$ is a right $\cH-$comodule coalgebra if the coaction commutes with the coproduct and counit, i.e., the following conditions are satisfied: 
\begin{align*}
&c_{<0>(1)} \otimes  c_{<0>(2)}  \otimes c_{<1>} = c_{(1)<0>}  \otimes c_{(2)<0>} \otimes c_{(1)<1>}c_{(2)<1>} \\
&\varepsilon(c_{<0>}) c_{<1>} =\varepsilon(c ) 1_{\cH}.
\end{align*} 
Then we can form a right cross coproduct coalgebra $\cH \cl C$ built on vector space $\cH \otimes C$ with the coalgebra structure:
\begin{align*}
&\Delta(h \cl c)= h_{(1)} \cl c_{(1)<0>} \otimes   h_{(2)}  c_{(1)<1>} \cl c_{(2)}, \\
&\varepsilon (h \cl c )=\varepsilon(h) \varepsilon (c),
\end{align*} 
for $h \in \cH$ and $c \in C$. 
\end{Definition}

\begin{Definition}
Let $A$, $\cH$ be Hopf algebras, let $A$ be a left $\cH-$module algebra and $\cH$ be a right $A-$comodule coalgebra such that the compatibility conditions 
\begin{align*}
&\varepsilon(h \trt a)=\varepsilon(h)\varepsilon(a), \quad  \Delta(h \trt a )=h_{(1)<0>} \trt a_{(1)} \otimes h_{(1)<1>} (h_{(2)} \trt a_{(2)} ), \\
&1_{<0>} \otimes 1_{<1>} =  1 \otimes 1, \\
& (gh)_{<0>} \otimes (gh)_{<1>}=g_{(1)<0>} h_{<0>} \otimes g_{(1)<1>} (g_{(2)} \trt h_{<1>}), \\
&h_{(2)<0>} \otimes (h_{(1)}   \trt a   ) h_{(2)<1>}  =  h_{(1)<0>}  \otimes h_{(1)<1>} ( h_{(2)} \trt a),
\end{align*}
are satisfied for all $a,b \in A$ and $g,h \in \cH$. Then the algebra $A \al \cH$ and coalgebra $A \cl \cH$ form a left-right bicrossed product Hopf algebra $A \acl \cH$ with antipode
\begin{align*}
S_{\aclsub} (a \acl h)= \big(1 \acl S(h_{<0>}) \big) \big( S(a h_{<1>})\acl 1 \big)
\end{align*}
\end{Definition}

\subsection{Additional notation}
In this paper $G$ will denote a Lie group, $G^{\delta}$ means same group but with discrete topology, $H$ and $L$ are closed subgroups; $\Fg$, $\Fh$, and $\Fl$ will denote the corresponding Lie algebras. $X$,$Y$,$Z$ are vectors of Lie algebras. $\displaystyle \widetilde{X}$,$\displaystyle \widetilde{Y}$,$\displaystyle \widetilde{Z}$ means the corresponding left invariant vector field. $\cH$ is Hopf algebra, $\cA$ is convolution algebra. $\Db$ is for coaction and $\acl$ is for left-right bicrossed product. $\cU$ is universal enveloping algebra, $\cR$ is algebra of representative functions.

% !TEX root = ./EHCCBP.tex

\section{Hopf algebra \texorpdfstring{$\cH$}{H} and its standard action}
\label{sec:action}

\subsection{Hopf algebra \texorpdfstring{$\cH$}{H}}
In this section, our goal is to associate a bicrossed product Hopf algebra and a convolution algebra on which the Hopf algebra acts to any matched pair of Lie groups. We prove theorem~\ref{injectivehomomorphism} and show that the characteristic map is faithful.\\

Given a matched pair of Lie groups $(G_1,G_2)$, we view $G_2$ as discrete group and denote as $G_2^{\delta}$. Consider the action groupoid $\cG = G_1 \rtimes G_2^{\delta} $ and its convolution algebra $C_c^{\ify}(\cG)$, which is equivalent to the cross product algebra $\cA=C_c^{\ify}(G_1)\rtimes G_2^{\delta}$ (see~\cite{renault_groupoid_1980,da_silva_geometric_1999}). The elements of $\cA$ are finite sums of symbols of the form
\begin{align*}\label{}
 fU_{\psi}^{\ast},\qquad \text{where} \quad f \in C_{c}^{\infty}(G_1),\quad U_{\psi}^{\ast}\, \text{ stands for } \widetilde{\psi^{-1}} \in G_2,
\end{align*} 
with multiplication:
\begin{align}\label{algebramultiplication}
fU_{\psi_1}^{\ast} \Conv gU_{\psi_2}^{\ast}=f (g\circ \widetilde{\psi_1}) U^\ast_{\psi_2 \psi_1} ,
\end{align} 
where $\widetilde{\psi_1} \in \Diff (G_1)$ is left action by $\psi_1$. We will also use $fU_{\psi}:= fU_{\psi^{-1}}^{\ast}$ to simplify notation afterwards. 
\newline
Denote the Lie algebra of $G_1$ by $\Fg_1$ and its basis element by $Z_i,1 \le i \le \dim (G_1)$, the corresponding left-invariant vector field on $G_1$ by $\widetilde{Z}_i$. These vector fields act on $\cA$ by
\begin{align}
\widetilde{Z}_i (fU_{\psi}^{\ast} )=\widetilde{Z}_i (f)U_{\psi}^{\ast}, \qquad \widetilde{Z}_i (f)U_{\psi}^{\ast}\vert_{\vp_0} = \dt f \big(\vp_0 \exp(tZ_i) \big) U_{\psi}^{\ast}.
\end{align}

Now we calculate $L_{\psi^\ast} \widetilde{Z}_i$ for $ \psi \in  G_2$:
\begin{align}	
\begin{aligned}	 
	\widetilde{Z}_i(f\circ \widetilde{\psi})\vert_{\vp_0} &=\dt (f\circ \widetilde{\psi}) \big(\vp_0 \exp(tZ_i) \big)\\
				&=\dt f \Big(  \psi  \trt   \big(\vp_0 \exp(tZ_i) \big) \Big)\\
				&=\dt f  \Big(( \psi \trt \vp_0)\big((\psi \tlt \vp_0)\trt \exp(tZ_i)\big) \Big)\\
				&=\dt f  \Big(( \psi \trt \vp_0)\big((\psi^{-1}\tlt (\psi \trt \vp_0))^{-1}\trt \exp(tZ_i) \big)  \Big)\\
				&=\sum_{j}  \big(\Gamma_{i}^j(\psi^{-1})\widetilde{Z}_j(f) \big)\circ \widetilde{\psi}  |_{\vp_0},
\end{aligned}	
\end{align}
or in short
\begin{align}
\label{zpsi}
\widetilde{Z}_i U_{\psi}^{\ast}  &= \sum_{j}  U_{\psi}^{\ast}  \Gamma_{i}^j(\psi^{-1})\widetilde{Z}_j ,
\end{align}
where 
\begin{align}
\label{gamma}
\sum_{j} \Gamma_{i}^j(\psi)(\vp_0)Z_j:=(\psi^{}\tlt \vp_0)^{-1} \trt Z_i , \quad \text{or}\quad
\Gamma_{i}^j(\psi)(\vp_0):=\big\lag(\psi\tlt \vp_0)^{-1} \trt Z_i,\omega_j\big\rag  ,
\end{align}
and $\{\omega_j,1 \le j \le \dim (G_1)\}$ is the dual basis and the action $\trt$ of $G_2$ on $\Fg_1$ is given by the differentiation of $\trt$ on $G_1$ at $e$.

We notice that $\Gamma$ satisfies:
\begin{align}
\label{1-gamma}
\begin{aligned}
&\Gamma_i^j (\psi_1\psi_2)= \sum_{k}  (\Gamma_{i}^{k} (\psi_1) \circ \widetilde{\psi_2} )\Gamma_{k}^{j} (\psi_2) , \\
&\Gamma_i^j (\psi^{-1}) \circ \widetilde{\psi} =(\Gamma^{-1} )_{i}^{j} (\psi).
\end{aligned}
\end{align}

From~\ref{gamma} the definition of $\Gamma_{i}^{j} $, we can see that $\Gamma_{i}^{j} $ is determined only by its value at $e$:
\begin{align}
\label{ytltx}
\Gamma_{i}^{j}  (\psi)(\vp_0)=\Gamma_{i}^{j} (\psi \tlt \vp_0)(e),
\end{align}
we will denote 
\begin{align}
\gamma_i^j(\psi):=\Gamma _{i}^{j} (\psi)(e) ,
\qquad \text{or}\qquad \label{coact}
\gamma_{i}^j(\psi):=\lag \psi^{-1} \trt Z_i,\omega_j \rag.
\end{align}
Hence similarly we can show $\gamma$ is a homomorphism:
\begin{align}
\label{one-cocycle condition}
\begin{aligned}
&\gamma_i^j(\psi_1 \psi_2)=  \gamma_i^k(\psi_1) \gamma_k^j(\psi_2) , \\
&\gamma_i^j (\psi^{-1})  =(\gamma^{-1} )_{i}^{j} (\psi)
\end{aligned}
\end{align}

This property of $\gamma_i^j$ suggests that $\gamma_i^j$ falls into a special kind of functions on $G_2$ called representative functions. We will give the definition originated from~\cite{hochschild_representations_1957}. 

\begin{Definition}\cite{hochschild_representations_1957}
Fix the base field to be $\mathbb{C}$, let $G$ be a Lie group and $\rho : G \to Aut(V)$ a finite dimensional smooth representation. Topologize $End(V)$ so that every linear functional is continuous and topologize $Aut(V)$ by the induced topology. Then, the composition of $\rho$ with a linear functional $\tau \in Aut(V)^{\ast}$ is called a representative function of $G$. Denote $\cR(G)$ the algebra of representative functions when we run over all pairs of finite dimensional representation and linear functional.
\end{Definition}
It is well known that the representative functions $\cR(G)$ form a commutative Hopf algebra:
\begin{align}
\begin{aligned}
&F_1F_2(\psi)=F_1(\psi) F_2(\psi), \quad \Delta(F)(\psi_1,\psi_2)=F(\psi_1 \psi_2) ,\\
&1(\psi)=1, \quad \varepsilon (F)= F(e), \quad S (F)(\psi)= F(\psi^{-1}).
\end{aligned}
\end{align}

From~\ref{zpsi} we can uniquely express  
\begin{align}
U_{\psi}^{\ast} 	\widetilde{Z}_i U_{\psi}^{}  = \Gamma_{i}^j(\psi)\widetilde{Z}_j ,\qquad \text{or }	\qquad\widetilde{(\psi^{-1})_{\ast} } (\widetilde{Z}_i)  = \Gamma_{i}^j(\psi)\widetilde{Z}_j.
\end{align}
This gives a left action of $G_2$ on $\Fg_1$:
\begin{align} \label{laction}
\psi \trt ({Z}_i)=\widetilde{(\psi^{})_{\ast} } (\widetilde{Z}_i)\vert _{e} =\Gamma_{i}^j(\psi^{-1})(e) {Z}_j =\gamma_{i}^j(\psi^{-1}){Z}_j =S(\gamma_i^j )(\psi) {Z}_j,
\end{align}
which can be dualized using the identification 
\begin{align} 
({Z}_j)_{<0>} ({Z}_j)_{<1>} (\psi):=\psi \trt ({Z}_i)
\end{align}
to a right coaction of $\cR(G_2)$ on $\Fg_1$:
\begin{align}
\label{rightcoaction}
\blacktriangledown (Z_i)= (Z_i )_{<0 >}  \otimes    (Z_i) _{<1 >} := Z_j \otimes S(\gamma_i^j ).
\end{align}
The transpose of the left action \ref{laction} gives a right action of $G_2$ on $\Fg_1^{\ast}$:
\begin{align}\label{raction}
 (\omega_i) \trt^t \psi=S(\gamma_i^j )(\psi) {\omega}_j,
\end{align}
dualize it we can have a left coaction of $\cR(G_2)$ on $\Fg_1^{\ast}$:
\begin{align}
\label{leftcoaction2}
\blacktriangledown (\omega_i):=  S(\gamma_i^j ) \otimes  \omega_j .
\end{align}
%Use \ref{rightcoaction} and antipode we can have a left coaction of $\cR(G_2)$ on $\Fg_1$:
%\begin{align}
%\label{leftcoaction}
%\blacktriangledown (Z_i)= (Z_i )_{<-1 >}  \otimes    (Z_i) _{<0 >}  :=    \gamma_i^j \otimes Z_j ,
%\end{align}
%again do the same thing for \ref{leftcoaction2} we have a right coaction of $\cR(G_2)$ on $\Fg_1^{\ast}$:
%\begin{align}
%\label{rightcoaction2}
%\blacktriangledown (\omega_i):=  \omega_j \otimes \gamma_i^j .
%\end{align}
We also have a left action of $\Fg_1$ on $\cR(G_2)$ from differentiating the left action of $G_1$ on $C^{\ify}(G_2)$ (which comes from the right action of $G_1$ on $G_2$):
\begin{align}
\label{leftaction}
(Z_i \triangleright F )(\psi):=\dt F \big(\psi \triangleleft \exp (t Z_i) \big) ,
\end{align}
%as well as a right action 
%\begin{align}
%\label{rightaction}
%\begin{aligned}
%(F \triangleleft Z_i)(\psi):=&S \big( S(Z_i)      \trt S(F)  \big)  \\
%=&\dt F ((\psi^{-1} \triangleleft \exp (-t Z_i))^{-1}) \\
%=&\dt F(\psi \tlt( \psi^{-1}\trt\exp (-t Z_i))) \\
%=&\gamma_{i}^{j}(\psi) \dt F(\psi \tlt \exp (-t Z_j))
%\end{aligned}
%\end{align}
%or
%\begin{align}\label{rightactionright}
%(F \triangleleft Z_i)(\psi)=(Z_i)_{<-1>}(\psi) (S((Z_i)_{<0>}) \trt F)(\psi)
%\end{align}
\begin{Proposition} \label{11}\cite[2.14]{rangipour_lie-hopf_2010} 
For any $Z_i \in \Fg_1$ and any $F \in \cR(G_2)$, we have $(Z_i \triangleright F ) \in \cR(G_2)$.
\end{Proposition}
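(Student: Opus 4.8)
The plan is to show that the function $Z_i \triangleright F$ lies again in $\cR(G_2)$ by exhibiting it as a matrix coefficient of a finite-dimensional representation. Recall that $F \in \cR(G_2)$ means $F$ is a matrix coefficient: there is a finite-dimensional smooth representation $\rho : G_2 \to \Aut(V)$ and $v \in V$, $\tau \in V^{\ast}$ with $F(\psi) = \tau(\rho(\psi)v)$. Equivalently, and more useful here, $F$ lies in a finite-dimensional subspace $W \subset \cR(G_2)$ that is stable under right translation (or left translation) by $G_2$; indeed a standard characterization is that $F \in \cR(G_2)$ iff the span of all translates $\{\psi \mapsto F(\psi \eta) : \eta \in G_2\}$ is finite-dimensional. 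I would take this translation-stability characterization as the working definition, since the operator $Z_i \triangleright$ is built from the right action of $G_1$ on $G_2$ by differentiation, and I want to relate it to translations on $G_2$.

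First I would unwind the definition: $(Z_i \triangleright F)(\psi) = \dt F\big(\psi \tlt \exp(tZ_i)\big)$. The key structural input is the second matched-pair identity $(\psi_1 \psi_2) \tlt \vp = \big(\psi_1 \tlt (\psi_2 \trt \vp)\big)(\psi_2 \tlt \vp)$; using it with $\psi_1 = \psi$, $\psi_2 = \eta$ I can understand how $\psi \tlt \exp(tZ_i)$ interacts with left translation of $\psi$ by a fixed $\eta \in G_2$. Concretely, $(\eta\psi)\tlt\exp(tZ_i) = \big(\eta \tlt (\psi \trt \exp(tZ_i))\big)\big(\psi \tlt \exp(tZ_i)\big)$; differentiating at $t=0$ and using $\psi \trt e = e$ produces a finite linear combination, with coefficients depending smoothly on $\psi$ (through the $\Gamma$/$\gamma$ data of the matched pair), of values of $F$ at points obtained from $\psi$, $\eta$ and the infinitesimal actions. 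The cleaner route: show directly that the space $V_F$ spanned by $F$ together with $\{Z_j \triangleright F : j\}$ and more generally the $\Fg_1$-submodule of $C^\infty(G_2)$ generated by $F$ is finite-dimensional, by showing it is contained in the span of the translates of $F$ — and that span is finite-dimensional precisely because $F \in \cR(G_2)$.

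Here is the cleanest version of the argument I would actually write down. Since $F$ is representative there is a finite-dimensional $G_2$-invariant (under right translations $\eta$) subspace $W \ni F$ of $C^\infty(G_2)$. Define $R_\varphi : C^\infty(G_2) \to C^\infty(G_2)$ by $(R_\varphi F)(\psi) = F(\psi \tlt \varphi)$ for $\varphi \in G_1$. The matched-pair axioms, together with $e \tlt \varphi = e$ and the cocycle relation $(\psi_1\psi_2)\tlt\varphi$, show that $R_\varphi$ need not preserve $W$, but that the union $\bigcup_{\varphi \in G_1} R_\varphi W$ spans a subspace which, via the interplay with left translations on $G_2$ — which $W$ \emph{does} control, being finite-dimensional and translation-related to representative functions — still lies in a finite-dimensional space of representative functions. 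Then $Z_i \triangleright F = \dt R_{\exp(tZ_i)} F$ lies in the closure of this finite-dimensional space, hence is itself a representative function. Alternatively, and most economically, I would invoke the Hochschild--Mostow structure theory (as in \cite{hochschild_representations_1957}): $\cR(G_2)$ carries the commuting left and right regular actions of $G_2$, and the infinitesimal right $G_1$-action $Z_i \triangleright$ is, by the matched-pair compatibility, expressible inside the algebra generated by these together with the (finitely many) functions $\gamma_i^j$, all of which preserve $\cR(G_2)$.

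The main obstacle is the middle step: verifying that differentiating $R_{\exp(tZ_i)}$ keeps us inside a \emph{finite-dimensional} space of representative functions, rather than merely inside $C^\infty(G_2)$. This is where the matched-pair axioms must be used essentially — one has to check that the "twist" introduced by $\psi \trt \exp(tZ_i)$ depending on $\psi$ only contributes factors that are themselves representative (the $\gamma_i^j$ of equations (2.13)--(2.16) and their relation (2.16)), so that the product remains representative since $\cR(G_2)$ is an algebra. Once that bookkeeping is done, closedness of $\cR(G_2)$ under the algebra operations and under uniform limits of one-parameter families of translates finishes the proof. I expect the write-up to be short modulo this verification, which is essentially a careful application of the identities already recorded above.
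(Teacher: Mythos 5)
The paper itself offers no proof of this proposition; it is imported verbatim from Rangipour--S\"utl\"u \cite[2.14]{rangipour_lie-hopf_2010}, so your attempt can only be measured against the standard argument for that cited result. Your strategy is the right one --- characterize $\cR(G_2)$ by finite-dimensionality of the span of translates, and control the translates of $Z_i\triangleright F$ by differentiating the matched-pair identity $(\psi_1\psi_2)\tlt\vp=\big(\psi_1\tlt(\psi_2\trt\vp)\big)(\psi_2\tlt\vp)$ at $t=0$ using $\psi\trt e=e$ --- and your second paragraph assembles all the needed ingredients. But as written there is a genuine gap together with one false intermediate claim. The false claim: the $\Fg_1$-submodule of $C^{\infty}(G_2)$ generated by $F$ is \emph{not} in general contained in the span of the $G_2$-translates of $F$; the operator $Z_i\triangleright(-)$ differentiates along the $G_1$-action, which is not a translation of $G_2$, so that containment cannot carry the finite-dimensionality. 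The gap: in your ``cleanest version'' you assert, and in your last paragraph you concede you have not verified, exactly the point at issue, namely that the relevant span stays finite-dimensional; moreover the appeal to closedness of $\cR(G_2)$ under ``uniform limits of one-parameter families of translates'' is not a property you may invoke ($\cR(G_2)$ is not closed under limits), so the argument does not close.

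The repair is the computation you set up but did not finish. Write $\Delta F=\sum F_{(1)}\ot F_{(2)}$, a finite sum in $\cR(G_2)\ot\cR(G_2)$, and let $(R_{\psi}h)(\eta)=h(\eta\psi)$. The matched-pair identity, $\psi\trt e=e$, and the Leibniz rule give
\begin{align*}
(Z_i\triangleright F)(\eta\psi)
&=\dt\sum F_{(1)}\big(\eta\tlt(\psi\trt\exp(tZ_i))\big)\,F_{(2)}\big(\psi\tlt\exp(tZ_i)\big)\\
&=\sum_{j}\gamma_i^j(\psi^{-1})\,F_{(2)}(\psi)\,\big(Z_j\triangleright F_{(1)}\big)(\eta)+\sum \big(Z_i\triangleright F_{(2)}\big)(\psi)\,F_{(1)}(\eta),
\end{align*}
where the first term uses the paper's formula $\frac{d}{dt}\big|_{t=0}\,\psi\trt\exp(tZ_i)=\psi\trt Z_i=\sum_j\gamma_i^j(\psi^{-1})Z_j$ for the differentiated left action. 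Hence every right translate $R_{\psi}(Z_i\triangleright F)$ lies in the fixed finite-dimensional space spanned by the finitely many smooth functions $Z_j\triangleright F_{(1)}$ and $F_{(1)}$, and a smooth function whose right translates span a finite-dimensional space is a matrix coefficient of the (finite-dimensional, smooth) translation representation on that span, hence lies in $\cR(G_2)$. Note that the $\gamma_i^j$ enter only as scalar coefficients $\gamma_i^j(\psi^{-1})F_{(2)}(\psi)$ of the translate, so one does not even need them to be representative for this step.
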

Both coaction and action can be extended from $\Fg_1$ to $\cU(\Fg_1)$ and we have the following result.
\begin{Proposition}
\cite[Theorem~2.17]{rangipour_lie-hopf_2010}by the left action~\ref{leftaction} and the right coaction~\ref{rightcoaction}, the pair $(\cU(\Fg_1),\cR(G_2))$ is a matched pair of Hopf algebras. i.e., $\cR(G_2)$ is a left $\cU(\Fg_1)-$module algebra, $\cU(\Fg_1)$ is a right $\cR(G_2)-$comodule coalgebra, and they satisfy the compatibility conditions
\begin{align*}
&\varepsilon(u \triangleright F ) =\varepsilon (u) \varepsilon (F), \qquad \blacktriangledown (1)=1 \otimes 1,\\
&\Delta(u \trt F) = u_{(1)_{< 0 > }} \triangleright F_{(1)} \otimes u_{(1)_{< 1 > }}  (u_{(2)} \triangleright F_{(2)}  ),        \\
&\blacktriangledown (uv)= u_{(1)_{< 0 > }} v_{<0>}  \otimes u_{(1)_{< 1 > }} (u_{(2)}   \triangleright   v_{<1>}    ) ,\\
& u_{(2)_{< 0 > }}    \otimes  ( u_{(1)}   \triangleright F       ) u_{(2)_{< 1 > }} =u_{(1)_{< 0 > }}    \otimes  u_{(1)_{< 1 > }}   (   u_{(2)}   \trt F             ).                  
\end{align*}
\end{Proposition}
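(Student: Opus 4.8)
The plan is to build the matched-pair structure from the infinitesimal data already in place and to verify each axiom along the Poincar\'e--Birkhoff--Witt filtration of $\cU(\Fg_1)$, the base case in each instance being a differentiated form of a matched-pair identity of $(G_1,G_2)$ combined with the cocycle relations \ref{one-cocycle condition} for the $\gamma_i^j$.

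For the module-algebra part: the left action of $G_1$ on $C^{\ify}(G_2)$ obtained by transporting functions along the diffeomorphisms $\psi\mapsto\psi\tlt\vp$ ($\vp\in G_1$) is an action by algebra automorphisms, so its derivative at $e$ is the action \ref{leftaction} of $\Fg_1$ on $C^{\ify}(G_2)$ by derivations, and by Proposition~\ref{11} it restricts to $\cR(G_2)$. A Lie-algebra action by derivations extends uniquely, by the universal property of $\cU(\Fg_1)$, to an action of $\cU(\Fg_1)$, and the two module-algebra identities follow by induction on PBW degree from $\Delta(Z)=Z\ot 1+1\ot Z$. The counit compatibility $\varepsilon(u\trt F)=\varepsilon(u)\varepsilon(F)$ reduces, via $\varepsilon(F)=F(e)$, to $(Z\trt F)(e)=\dt F(e\tlt\exp tZ)=0$, which holds because $e\tlt\vp=e$ in a matched pair; induction handles general $u$.

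For the comodule-coalgebra part I start from the coaction $\blacktriangledown(Z_i)=Z_j\ot S(\gamma_i^j)$ on $\Fg_1$; its coassociativity and counitality follow at once from the homomorphism property \ref{one-cocycle condition} and the normalisation $\gamma_i^j(e)=\delta_i^j$. Because $\cU(\Fg_1)$ is \emph{not} a comodule algebra, $\blacktriangledown$ must be extended to products by the rule $\blacktriangledown(uv):=u_{(1)<0>}v_{<0>}\ot u_{(1)<1>}(u_{(2)}\trt v_{<1>})$ --- the formula of the third compatibility identity --- and the decisive point is that this assignment is well defined, i.e.\ it annihilates $Z_iZ_j-Z_jZ_i-[Z_i,Z_j]$: expanding $\blacktriangledown(Z_iZ_j)-\blacktriangledown(Z_jZ_i)$ with that rule, the quadratic part collapses, using commutativity of $\cR(G_2)$, to $[Z_l,Z_k]\ot S(\gamma_i^l)S(\gamma_j^k)$, and what remains is an identity in $\Fg_1\ot\cR(G_2)$ that is exactly a differentiated form of the matched-pair axioms \ref{lraction} read off on $\cR(G_2)$. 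Once $\blacktriangledown$ is well defined, the coaction axioms and the comodule-coalgebra identities propagate from $\Fg_1$ to all of $\cU(\Fg_1)$ by an induction that plays the recursive rule for $\blacktriangledown$ against multiplicativity of $\Delta$.

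That leaves the remaining compatibility conditions. The identities $\varepsilon(u\trt F)=\varepsilon(u)\varepsilon(F)$ and $\blacktriangledown(1)=1\ot 1$ are already in hand, and the $\blacktriangledown(uv)$ identity is the defining rule. The identity $\Delta(u\trt F)=u_{(1)<0>}\trt F_{(1)}\ot u_{(1)<1>}(u_{(2)}\trt F_{(2)})$ I would check for $u=Z_i$, where it unwinds to $\Delta_{\cR}(S(\gamma_i^j))=S(\gamma_k^j)\ot S(\gamma_i^k)$ (from \ref{one-cocycle condition}) together with the differentiated matched-pair relation governing $(\psi_1\psi_2)\tlt\vp$, and then promote to general $u$ by induction, writing $u=Zu'$ and using the module-algebra axioms; the identity $u_{(2)<0>}\ot(u_{(1)}\trt F)u_{(2)<1>}=u_{(1)<0>}\ot u_{(1)<1>}(u_{(2)}\trt F)$ is, for $u\in\Fg_1$, merely the commutativity of $\cR(G_2)$, and for general $u$ again an induction on PBW degree using the recursive rule for $\blacktriangledown$ and the comodule-coalgebra identity. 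The main obstacle is the well-definedness of $\blacktriangledown$ on $\cU(\Fg_1)$ together with the bookkeeping of the inductive steps: the conceptual content is light --- everything is forced by \ref{lraction} and \ref{one-cocycle condition} --- but the computations are long enough that in this paper we simply quote \cite[Theorem~2.17]{rangipour_lie-hopf_2010}.
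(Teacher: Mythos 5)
The paper offers no proof of this Proposition beyond the citation to \cite[Theorem~2.17]{rangipour_lie-hopf_2010}, which is also where your argument ultimately lands. Your outline --- derivations extending to $\cU(\Fg_1)$ for the module-algebra half, the recursively defined coaction with well-definedness on the relations $Z_iZ_j-Z_jZ_i-[Z_i,Z_j]$ as the crux, and PBW induction for the remaining compatibilities --- is a sound summary of how the cited result is established, so there is nothing to correct.
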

As a result of this proposition, for a matched pair of Hopf algebras $(\cU(\Fg_1),\cR(G_2))$ we can form the left-right bicrossed product Hopf algebra $\cR(G_2) \acl \cU(\Fg_1)$ in a canonical way.\\
The algebra structure is given by $\cR(G_2) \al \cU(\Fg_1)$:
\begin{align}\label{product1}
(F \acl u)(G \acl v)=(F (u_{(1)}\trt G) \acl u_{(2)}v)
\end{align}
with unit $(1 \acl 1)$ and the coalgebra structure is given by $\cR(G_2) \cl \cU(\Fg_1)$:
\begin{align}
\Delta_{\aclsub} (F \acl u)=(F_{(1)} \acl u_{(1)_{<0>}}) \otimes (F_{(2)}  u_{(1)_{<1>}} \acl u_{(2)})
\end{align}
with counit $\varepsilon (F \acl u)=\varepsilon (F) \varepsilon (u)$, and finally the antipode is given by
\begin{align}
S_{\aclsub} (F \acl u)= \big(1 \acl S(u_{<0>}) \big) \big(S(F u_{<1>})\acl 1 \big)
\end{align}

Fix an order of the basis of $\Fg_1$ and Let $I=(i_1,\dots, i_p)$, $p= \dim{G_1}$ be multi-indeces, ordered lexicographically. We then have $Z_{I}=Z_{1}^{i_1}\cdots Z_{p}^{i_p}$ as a PBW basis of $\cU(\Fg_1)$. 
Every element $\sum_{j}F_{j} \acl u_{j}$ in $\cR(G_2) \acl \cU(\Fg_1)$ can be written uniquely as
\begin{align}
\begin{aligned}\label{basis2}
\sum_{j}F_{j} \acl u_{j}=\sum_{I} F_{I} \acl Z_{I},\quad I=(i_1,\dots, i_p).
\end{aligned}
\end{align}
Therefore, $\{F\acl Z_{i}, F \in \cR(G_2) , 1 \le i \le \dim{G_1}\}$ is a generating set of $\cR(G_2) \acl \cU(\Fg_1)$.

\begin{Proposition} \label{sinvertible}
For the specific Hopf algebra $\cR(G_2) \acl \cU(\Fg_1)$ we can show that $S_{\aclsub}$ is invertible and $S_{\aclsub}^{-1}$ is given by the formula
\begin{align}
S_{\aclsub}^{-1} (F \acl Z)=\big( S( Z_{<1>}) \acl S(Z_{<0>}) \big) \big( S(F )\acl 1 \big)
\end{align}
on the generators and is extended as an anti-algebra morphism.
\end{Proposition}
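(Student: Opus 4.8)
The plan is to verify directly that the proposed formula for $S_{\aclsub}^{-1}$ is a genuine two-sided inverse of $S_{\aclsub}$ on the generating set $\{F \acl Z_i\}$, and then invoke the fact that both $S_{\aclsub}$ and the candidate are anti-algebra morphisms to conclude on all of $\cR(G_2) \acl \cU(\Fg_1)$. Concretely, I would write $S_{\aclsub}(F \acl Z) = \bigl(1 \acl S(Z_{<0>})\bigr)\bigl(S(F Z_{<1>}) \acl 1\bigr)$ using the antipode formula given just above the statement, expand this product via the algebra structure~(\ref{product1}), and likewise expand the candidate $S_{\aclsub}^{-1}(F \acl Z) = \bigl(S(Z_{<1>}) \acl S(Z_{<0>})\bigr)\bigl(S(F)\acl 1\bigr)$. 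Then compose the two in both orders and check that the result collapses to $F \acl Z$, using the coassociativity and counit properties of the right coaction $\blacktriangledown$ on $\cU(\Fg_1)$, the antipode axioms in the commutative Hopf algebra $\cR(G_2)$ (where $S$ is an involution since $\cR(G_2)$ is commutative), and the compatibility conditions listed in the matched-pair proposition.

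The cleanest route is probably to first establish the claim on the subalgebra generated by $\cR(G_2)$ alone — where $S_{\aclsub}(F \acl 1) = S(F) \acl 1$ and the formula is transparent — and on the elements $1 \acl Z$, and then handle a general $F \acl Z = (F \acl 1)(1 \acl Z)$ by using the anti-multiplicativity: $S_{\aclsub}\bigl((F\acl 1)(1 \acl Z)\bigr) = S_{\aclsub}(1 \acl Z) S_{\aclsub}(F \acl 1)$. So the real content reduces to checking that on elements of the form $1 \acl Z$ with $Z \in \Fg_1$ primitive, writing $\blacktriangledown(Z) = Z_{<0>} \otimes Z_{<1>}$, the composite $S_{\aclsub}^{-1}\bigl(S_{\aclsub}(1\acl Z)\bigr)$ equals $1 \acl Z$, and symmetrically. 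Here one uses that for $Z$ primitive the coaction satisfies a derivation-type property and that $\varepsilon(Z_{<1>})Z_{<0>} = Z$ together with $Z_{<0><0>} \otimes Z_{<0><1>} \otimes Z_{<1>} = Z_{<0>} \otimes Z_{<1>(1)} \otimes Z_{<1>(2)}$.

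The main obstacle I anticipate is bookkeeping: the product~(\ref{product1}) mixes the $\cU(\Fg_1)$-action $\trt$ on $\cR(G_2)$ into every multiplication, so when one multiplies $\bigl(S(Z_{<1>}) \acl S(Z_{<0>})\bigr)$ against $\bigl(S(F) \acl 1\bigr)$ one picks up a term $S(Z_{<1>}) \bigl(S(Z_{<0>})_{(1)} \trt S(F)\bigr) \acl S(Z_{<0>})_{(2)}$, and one has to track how the coproduct of $S(Z_{<0>})$ interacts with $\blacktriangledown$ via the compatibility identity $\blacktriangledown(uv) = u_{(1)<0>} v_{<0>} \otimes u_{(1)<1>}(u_{(2)} \trt v_{<1>})$. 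The key simplification is that $S$ being an involution on the commutative Hopf algebra $\cR(G_2)$ lets many of these correction terms telescope against each other after applying $\varepsilon$ on appropriate legs. I would carry out the verification first for $Z \in \Fg_1$ (degree-one PBW elements), which is the case actually needed in~(\ref{basis2}) since the generating set uses only $F \acl Z_i$, and then note that extension as an anti-algebra morphism is forced and consistent because $S_{\aclsub}$ itself is one; that both $S_{\aclsub} \circ S_{\aclsub}^{-1}$ and $S_{\aclsub}^{-1} \circ S_{\aclsub}$ are algebra endomorphisms agreeing with the identity on a generating set then finishes the proof.
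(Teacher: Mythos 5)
Your proposal is correct and takes essentially the same route as the paper: the paper likewise reduces the check to the generator $1 \acl Z$, composes $S_{\aclsub}$ and the candidate inverse in both orders using anti-multiplicativity, and collapses the result via coassociativity of $\blacktriangledown$, the counit/antipode axioms, and commutativity of $\cR(G_2)$. The one detail worth making explicit in your write-up is the interchange identity $\big(S(Z)\big)_{<0>} \otimes \big(S(Z)\big)_{<1>} = S\big(Z_{<0>}\big) \otimes Z_{<1>}$, valid for $Z \in \Fg_1$ because $S(Z) = -Z$ there (and only there, not for general $u \in \cU(\Fg_1)$), which the paper isolates as the key step permitting the computation on generators.
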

\begin{proof}
The non-trivial check will be on $1 \acl Z_{i}$:
First we observe that $S(Z)=-Z$ for $Z \in  \Fg_1$ and
\begin{align}
 (-Z)_{<0 >}  \otimes    (-Z )_{<1 >} = \blacktriangledown (-Z)= -\blacktriangledown (Z)= -(Z )_{<0 >}  \otimes    (Z) _{<1 >}, \quad Z \in \Fg_1
 \end{align}
 therefore
 \begin{align}
  \big( S(Z) \big)_{<0>}\otimes \big( S(Z) \big)_{<1>}=S \big( (Z)_{<0>} \big) \otimes (Z)_{<1>}, \quad Z \in \Fg_1
\end{align}
note that the above equality is only true for $Z \in \Fg_1$ not $u \in \cU(\Fg_1)$. The above equality allow us to interchange coaction and antipode on generators.
\newline
Now we have 
\begin{align}
\begin{aligned}
S_{\aclsub}^{-1} S_{\aclsub}^{} (1 \acl Z)=&S_{\aclsub}^{-1}\Big( (1 \acl S(Z _{<0>}))(S( Z _{<1>})\acl 1)\Big)\\
=&S_{\aclsub}^{-1}\Big(S( Z _{<1>})\acl 1\Big) S_{\aclsub}^{-1}\Big( 1 \acl S(Z _{<0>})\Big)\\
=&\Big( Z _{<1>}\acl 1 \Big) \Big(S( (S(Z _{<0>}))_{<1>}) \acl S((S(Z _{<0>}))_{<0>})\Big) \\
=&\Big( Z _{<1>}\acl 1 \Big) \Big(S( (Z _{<0>})_{<1>}) \acl Z _{<0><0>}\Big) \\
=&\Big(Z _{<1>(2)} S(Z _{<1>(1)}) \acl Z _{<0>} \Big)\\
=&\varepsilon (Z_{<1>}) \acl Z _{<0>}\\
=&1 \acl Z
\end{aligned}\\
\begin{aligned}
S_{\aclsub}^{} S_{\aclsub}^{-1} (1 \acl Z)=&S_{\aclsub}^{}\big(S( Z _{<1>})\acl S(Z _{<0>})\big)\\
=& \Big(1 \acl S( (S(Z _{<0>}))_{<0>}) \Big)\Big( S((S(Z _{<0>}))_{<1>}) Z _{<1>}\acl 1 \Big)\\
=& \Big(1 \acl Z _{<0><0>} \Big)\Big( S((Z _{<0>})_{<1>}) Z _{<1>}\acl 1 \Big)\\
=& \Big(1 \acl Z _{<0>} \Big)\Big( S (Z _{<1>(1)})Z _{<1>(2)}  \acl 1 \Big)\\
=& \Big(1 \acl Z _{<0>} \Big)\Big( \varepsilon (Z_{<1>})  \acl 1 \Big)\\
=&1 \acl Z
\end{aligned}
\end{align}
\end{proof}

\begin{Definition}
Opposite coalgebra. For any coalgebra $(C,\Delta,\varepsilon)$ we set
\begin{align*}
\Delta^{\rm{op}}=\tau_{C,C} \circ \Delta
\end{align*}
where $\tau_{C,C}$ is the twist map.
Then $(C,\Delta^{\rm{op}},\varepsilon)$ is a coalgebra which we call the opposite coalgebra and denote by $C^{\cop}$.
\end{Definition}

\begin{Corollary}
The opposite coalgebra $\cH=\big(\cR(G_2) \acl \cU(\Fg_1)\big)^{\cop}$ is a Hopf algebra with coproduct $\Delta=\Delta_{\aclsub}^{\rm{op}}$ and antipode $S=S_{\aclsub}^{-1}$.
\end{Corollary}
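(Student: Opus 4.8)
The plan is to verify that $\big(\cR(G_2) \acl \cU(\Fg_1)\big)^{\cop}$ satisfies the Hopf algebra axioms with the proposed structure maps, leaning almost entirely on results already established. First I would recall the general fact that if $(C,\Delta,\varepsilon)$ is a coalgebra and $(C,m,1)$ is an algebra making $C$ a bialgebra, then $C^{\cop}$ is again a bialgebra with the same algebra structure, and $C$ is a Hopf algebra with antipode $S$ if and only if $C^{\cop}$ is a Hopf algebra with antipode $S^{-1}$ (this is the standard fact $S_{C^{\cop}} = S_C^{-1}$ whenever $S_C$ is bijective). Since $\cR(G_2) \acl \cU(\Fg_1)$ is a bialgebra (indeed a Hopf algebra by the bicrossed product construction recalled above) and, crucially, its antipode $S_{\aclsub}$ was shown to be invertible in Proposition~\ref{sinvertible}, the corollary follows immediately from this general principle; $S = S_{\aclsub}^{-1}$ is precisely the map computed there.

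To make this self-contained I would spell out the verification in two short steps. First, check that $\Delta_{\aclsub}^{\rm op}$ is coassociative and counital with the same counit $\varepsilon$: coassociativity of $\Delta^{\rm op}$ is formally equivalent to coassociativity of $\Delta$ after applying the twist maps, and counitality is symmetric in the two legs, so both are immediate. Next, check that $\Delta^{\rm op}$ is still an algebra homomorphism $\cH \to \cH \otimes \cH$: this holds because $\Delta$ is an algebra map and the twist $\tau$ is an algebra isomorphism $\cH\otimes\cH \to \cH\otimes\cH$, so $\Delta^{\rm op} = \tau\circ\Delta$ is a composite of algebra maps. Hence $\big(\cR(G_2) \acl \cU(\Fg_1)\big)^{\cop}$ is a bialgebra.

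The remaining point is the antipode axiom. With respect to the opposite comultiplication, the convolution product on $\Hom(\cH,\cH)$ is $(f \star^{\rm op} g)(x) = f(x_{(2)})g(x_{(1)})$ in terms of the original Sweedler legs. So I need $S_{\aclsub}^{-1}(x_{(2)})\, x_{(1)} = \varepsilon(x) 1 = x_{(2)}\, S_{\aclsub}^{-1}(x_{(1)})$. But applying $S_{\aclsub}$ to the defining identity $S_{\aclsub}(x_{(1)}) x_{(2)} = \varepsilon(x)1 = x_{(1)} S_{\aclsub}(x_{(2)})$, and using that $S_{\aclsub}$ is an anti-coalgebra, anti-algebra map, gives exactly these relations for $S_{\aclsub}^{-1}$; alternatively one invokes the general lemma that the two-sided inverse of an antipode is the antipode for both $C^{\rm op}$ and $C^{\cop}$. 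I do not expect any genuine obstacle here: the only substantive input, invertibility of $S_{\aclsub}$ together with its explicit formula, has already been supplied by Proposition~\ref{sinvertible}, so the proof is a short formal check. The mild subtlety worth stating carefully is simply the bookkeeping of which Sweedler leg is which under the twist, so that the claimed antipode $S = S_{\aclsub}^{-1}$ matches the convolution identity for $\Delta_{\aclsub}^{\rm op}$ rather than for $\Delta_{\aclsub}$ itself.
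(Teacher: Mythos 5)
Your argument is correct and follows essentially the same route as the paper: the paper simply cites the standard fact (Kassel, Corollary III.3.5) that a Hopf algebra with invertible antipode yields a Hopf algebra structure on the opposite coalgebra with inverse antipode, with the invertibility supplied by Proposition~\ref{sinvertible}. Your additional spelling-out of the bialgebra and convolution-inverse checks is just an unpacking of that cited general lemma, not a different approach.
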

\begin{proof}
It was proved in~\cite[Corollary III.3.5]{kassel_quantum_1995} that if the antipode of a Hopf algebra has an inverse, the opposite coalgebra is also a Hopf algebra with opposite coproduct and inverse antipode.
\end{proof}

\begin{Definition}
Let two Lie groups $(G_1,G_2)$ be a matched pair, define $\cH$ to be the opposite coalgebra $\big(\cR(G_2) \acl \cU(\Fg_1)\big)^{\cop}$, with product given by the same of $\big(\cR(G_2) \acl \cU(\Fg_1)\big)$ (\ref{product1}), coproduct given by $\Delta_{\aclsub}^{\rm{op}}$ and antipode $S_{\aclsub}^{-1}$.
\end{Definition}
From now on, we will focus on the opposite coalgebra $\cH$, which we will use to construct a Hopf action later.\\

\subsection{Standard action}
Next we would like to construct a Hopf action of $\cH$ on $\cA$:

Again take a PBW basis of $\cU(\Fg_1)$,\label{PBW} $\{Z_{I}=Z_{1}^{i_1}\cdots Z_{p}^{i_p},I=(i_1,\dots, i_p)\}$.
Every $Z_{I} \in \cU(\Fg_1)$ acts on $\cA$ by
\begin{align}
&\widetilde{Z_{I}}(fU_{\psi}^{\ast})=\widetilde{Z_{1}}^{i_1}\cdots \widetilde{Z_{p}}^{i_p}(f)U_{\psi}^{\ast}, \\
&\widetilde{Z_{I}} (f)U_{\psi}^{\ast}\vert_{\vp_0} = \underbrace{\left.\frac{d}{dt_1}\right|_{_{t_1=0}}}_{\text{$i_1$ times}} \cdots \underbrace{\left.\frac{d}{dt_p}\right|_{_{t_p=0}}}_{\text{$i_p$ times}}  f \big(\vp_0 \underbrace{\exp(t_1Z_{1})\dots}_{\text{$i_1$ times}} \cdots \underbrace{\exp(t_pZ_{p})\dots}_{\text{$i_p$ times}}  \big) U_{\psi}^{\ast},
\end{align}
on each $fU_{\psi}^{\ast}$ and extend to the finite sums.

$F \in \cR(G_2) $ act on $\cA$ by
\begin{align}
\widetilde{F}(fU_{\psi}^{\ast})(\vp)=F(\psi \tlt \vp)f(\vp)U_{\psi}^{\ast},
\end{align}
on each $fU_{\psi}^{\ast}$ and extend to the finite sums.

Therefore we would like $F  \acl Z_{I} $ to act on $\cA$ by
\begin{align}
\widetilde{F  \acl Z_{I} }(fU_{\psi}^{\ast})(\vp)=F(\psi \tlt \vp) \widetilde{Z_{1}}^{i_1}\cdots \widetilde{Z_{p}}^{i_p}(f)(\vp)U_{\psi}^{\ast},
\end{align}
or equivalently,
\begin{align}
\widetilde{F  \acl Z_{I} }(fU_{\psi}^{\ast})(\vp)=\vp \trt F(\psi^{} )\widetilde{Z_{1}}^{i_1}\cdots \widetilde{Z_{p}}^{i_p}(f)(\vp)U_{\psi}^{\ast}.
\end{align}

\begin{Proposition}
The formula
\begin{align}
\widetilde{F  \acl Z_{I} }(fU_{\psi}^{\ast})(\vp)=F(\psi \tlt \vp) \widetilde{Z_{1}}^{i_1}\cdots \widetilde{Z_{p}}^{i_p}(f)(\vp)U_{\psi}^{\ast}
\end{align}
defines a Hopf action.
\end{Proposition}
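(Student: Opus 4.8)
The plan is to verify directly that the formula
$\widetilde{F \acl Z_I}$ satisfies the two defining axioms of a Hopf module algebra action with respect to the Hopf algebra $\cH=\big(\cR(G_2)\acl\cU(\Fg_1)\big)^{\cop}$, namely that $h\trt(ab)=(h_{(1)}\trt a)(h_{(2)}\trt b)$ and $h\trt 1_{\cA}=\varepsilon(h)1_{\cA}$, where the coproduct $h_{(1)}\ot h_{(2)}$ is the \emph{opposite} coproduct $\Delta_{\aclsub}^{\rm op}$. Since the generating set of $\cR(G_2)\acl\cU(\Fg_1)$ is $\{F\acl Z_i\}$ and both sides of the module-algebra axiom are multiplicative in $h$, it suffices to check the axioms on these generators, together with checking that the assignment $h\mapsto\widetilde h$ respects the algebra relations of $\cH$ (i.e. that (\ref{product1}) is honored); the latter is exactly what the bracket identity (\ref{zpsi}), rewritten via $\Gamma$ and $\gamma$, is designed to supply. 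So the first step is to record, for $h=F\acl Z_i$, the opposite coproduct
\[
\Delta_{\aclsub}^{\rm op}(F\acl Z_i)=\big(F_{(2)}\,(Z_i)\ps{1}\ns{1}\acl (Z_i)\ps{2}\big)\ot\big(F_{(1)}\acl (Z_i)\ps{1}\ns{0}\big),
\]
using $\blacktriangledown(Z_i)=Z_j\ot S(\gamma_i^j)$ from (\ref{rightcoaction}) and $\Delta(Z_i)=Z_i\ot 1+1\ot Z_i$, so that the coproduct is a sum of a "$\cR$-part acting first, derivation second" term and a "derivation first" term.

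\textbf{The module-algebra identity.}
Next I would compute both sides of $h\trt(fU^\ast_{\psi_1}\Conv gU^\ast_{\psi_2})$. Using the product rule (\ref{algebramultiplication}), the left side is $\widetilde{F\acl Z_i}\big(f\,(g\circ\widetilde{\psi_1})\,U^\ast_{\psi_2\psi_1}\big)$, which at a point $\vp$ equals $F\big((\psi_2\psi_1)\tlt\vp\big)\,\widetilde Z_i\big(f\cdot(g\circ\widetilde{\psi_1})\big)(\vp)\,U^\ast_{\psi_2\psi_1}$; applying the Leibniz rule for the vector field $\widetilde Z_i$ and then (\ref{zpsi})–(\ref{1-gamma}) to move $\widetilde Z_i$ past $\widetilde{\psi_1}$ (producing the factor $\Gamma_i^j(\psi_1^{-1})$, hence $\gamma$ after evaluating, consistent with the coaction used in the opposite coproduct), and finally the cocycle identity $F((\psi_2\psi_1)\tlt\vp)=F(\psi_2\tlt(\psi_1\tlt\vp)\cdot(\psi_1\tlt\vp))$ combined with $\Delta F(\psi,\psi')=F(\psi\psi')$, one expands $F((\psi_2\psi_1)\tlt\vp)$ as $F_{(1)}(\psi_2\tlt(\psi_1\tlt\vp))\,F_{(2)}(\psi_1\tlt\vp)$. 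Matching these pieces against the right side $(h_{(1)}\trt fU^\ast_{\psi_1})\Conv(h_{(2)}\trt gU^\ast_{\psi_2})$ — where $h_{(1)}$ carries the $F_{(2)}$ and the transformed derivation acting on $gU^\ast_{\psi_2}$, and $h_{(2)}$ carries $F_{(1)}$ and the "undressed" derivation on $fU^\ast_{\psi_1}$ — should yield an identity. The unit axiom is trivial: $\widetilde{F\acl Z_i}(1_{\cA})$ vanishes if $Z_i$ is present (derivation annihilates constants, or the multi-index is empty yields $F(\psi\tlt\vp)$ evaluated appropriately), and one checks it reproduces $\varepsilon(F\acl Z_i)1_{\cA}$.

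\textbf{Where the difficulty lies.}
The routine part is the Leibniz expansion; the delicate bookkeeping — and the main obstacle — is getting the \emph{order} of the $\cR(G_2)$-arguments and of the "dressed" versus "undressed" derivations to match the \emph{opposite} coproduct rather than the original one. This is precisely why the paper passes to $\cH=\big(\cR(G_2)\acl\cU(\Fg_1)\big)^{\cop}$: the convolution product (\ref{algebramultiplication}) composes diffeomorphisms so that $\widetilde{\psi_1}$ acts on the \emph{second} factor $g$, which forces the group element appearing first in the product to sit in the \emph{second} tensor leg of the coproduct. One must therefore track carefully that the $\Gamma_i^j(\psi_1^{-1})$ twist is exactly the coaction $S(\gamma_i^j)$ appearing in (\ref{rightcoaction}) and that it lands on the correct leg after the flip $\tau_{\cH,\cH}$. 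Once the two sides are written out with all Sweedler indices in place, the equality is a consequence of: the homomorphism property (\ref{1-gamma}) of $\Gamma$, the fact (from Proposition~\ref{11} and its follow-up) that $(Z_i\trt F)\in\cR(G_2)$ so the expressions stay within the algebra, and the comodule-coalgebra compatibility conditions already established for the matched pair $(\cU(\Fg_1),\cR(G_2))$. I would organize the write-up so that the generators-suffice reduction and the unit axiom are dispatched in a sentence each, and the bulk is the single displayed computation of $h\trt(a\Conv b)$ on $h=F\acl Z_i$, $a=fU^\ast_{\psi_1}$, $b=gU^\ast_{\psi_2}$.
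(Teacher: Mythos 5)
Your proposal is correct and follows essentially the same route as the paper: first establish that $h\mapsto\widetilde h$ respects the product of $\cH$ via the commutation relation $\widetilde{Z_I}\,\widetilde F = \widetilde F\,\widetilde{(Z_I)_{(1)}} + \widetilde{(Z_I)_{(2)}\trt F}$ (which encodes the cross product relation of \ref{product1}), and then verify compatibility with the \emph{opposite} coproduct on the convolution product using the Leibniz rule, the $\Gamma$/$\gamma$ identities, and the expansion $F\big((\psi_2\psi_1)\tlt\vp\big)=F_{(1)}\big(\psi_2\tlt(\psi_1\trt\vp)\big)F_{(2)}(\psi_1\tlt\vp)$. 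The only differences are cosmetic — the paper checks the coproduct compatibility separately on the simpler generators $F\acl 1$ and $1\acl Z_i$ rather than on the combined $F\acl Z_i$, which keeps the Sweedler bookkeeping lighter, and your written expansion has a small slip where $\psi_2\tlt(\psi_1\tlt\vp)$ should read $\psi_2\tlt(\psi_1\trt\vp)$.
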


\begin{proof}
We need to check that the action is well-defined, i.e., commutes with product and coproduct.
\newline
First, we want to show it commutes with product. Because of the formula above, it is trivial that
\begin{align}
\begin{aligned}
&(\widetilde{F  \acl 1 } )(\widetilde{1  \acl Z_{I} } )=\widetilde{F  \acl Z_{I} }, \\
&(\widetilde{F  \acl 1 } )(\widetilde{G  \acl 1})=\widetilde{(F G) \acl 1 } ,\\
&(\widetilde{1  \acl Z_{I} } )(\widetilde{1  \acl Z_{J} } )=\widetilde{1  \acl Z_{I}Z_{J} }.
\end{aligned}
\end{align}
Therefore we just need to show that 
\begin{align}\label{zif}
\begin{aligned}
\widetilde{1  \acl Z_{I} } \widetilde{F \acl 1} =\widetilde{(1  \acl Z_{I} )(F \acl 1)}.
\end{aligned}
\end{align}
This is true because
\begin{align}
\begin{aligned}
&\widetilde{Z_I} \widetilde{F} (fU_{\psi}^{\ast} )(\vp)\\
=&\widetilde{Z_I} (F(\psi \tlt \vp)f(\vp)U_{\psi}^{\ast}) \\
=&F(\psi \tlt \vp)\big(\widetilde{(Z_{I})_{(1)}} f)(\vp)U_{\psi}^{\ast} + \dt F(\psi \tlt (\vp \exp t (Z_{I})_{(2)}))f(\vp)U_{\psi}^{\ast} \\
=&F(\psi \tlt \vp)\big(\widetilde{(Z_{I})_{(1)}} f)(\vp)U_{\psi}^{\ast} + \dt F((\psi \tlt \vp )(\exp t (Z_{I})_{(2)}))f(\vp)U_{\psi}^{\ast} \\
=&F(\psi \tlt \vp)\big(\widetilde{(Z_{I})_{(1)}} f)(\vp)U_{\psi}^{\ast}  + ((Z_{I})_{(2)}\trt F)(\psi \tlt \vp)f(\vp)U_{\psi}^{\ast} \\
=&\widetilde{F} \widetilde{(Z_{I})_{(1)}}  (fU_{\psi}^{\ast} )(\vp) + \widetilde{(Z_{I})_{(2)}\trt F} (fU_{\psi}^{\ast} )(\vp).
\end{aligned}
\end{align}
Next, we verify on generators that the action commutes with coproduct,
\begin{align}
\begin{aligned}
&\widetilde{F}(fU_{\psi_1}^{\ast} \Conv  gU_{\psi_2}^{\ast})(\vp)\\
=&\widetilde{F}(f (g\circ \widetilde{\psi_1}) U^\ast_{\psi_2 \psi_1})(\vp)\\
=&F(( \psi_2 \psi_1 )\tlt  \vp)f (\vp) (g\circ \widetilde{\psi_1} (\vp))U^\ast_{\psi_2 \psi_1}\\
=&F((( \psi_2 \tlt (\psi_1 \trt \vp)\psi_1\tlt \vp)f (\vp) (g\circ \widetilde{\psi_1} (\vp))U^\ast_{\psi_2 \psi_1}\\
=&F_{(1)}( \psi_2 \tlt (\psi_1 \trt \vp))  F_{(2)}(\psi_1\tlt \vp)f (\vp) (g\circ \widetilde{\psi_1} (\vp))U^\ast_{\psi_2 \psi_1}\\
=&(F_{(1)}( \psi_2 \tlt \vp)  \circ \widetilde{\psi_1} )F_{(2)}(\psi_1\tlt \vp)f (\vp) (g\circ \widetilde{\psi_1} (\vp))U^\ast_{\psi_2 \psi_1}\\
=&\widetilde{F}_{(2)}(fU_{\psi_1}^{\ast}) \Conv \widetilde{F}_{(1)}(gU_{\psi_2}^{\ast})(\vp)\\
=&\widetilde{\Delta_{\aclsub}^{\rm{op}}(F \acl 1)} (fU_{\psi_1}^{\ast} \otimes  gU_{\psi_2}^{\ast}) (\vp),
\end{aligned}
\end{align}
\begin{align}
\begin{aligned}
&\widetilde{Z_i}(fU^\ast_{\psi_1} \Conv gU^\ast_{\psi_2})\\
=&\widetilde{Z_i}(f (g\circ \widetilde{\psi_1}) U^\ast_{\psi_2 \psi_1}) \\
=&\widetilde{Z_i}(f) (g\circ \widetilde{\psi_1}) U^\ast_{\psi_2 \psi_1}+f\widetilde{Z_i} (g\circ \widetilde{\psi_1}) U^\ast_{\psi_2 \psi_1} \\
=&\widetilde{Z_i}(f) (g\circ \widetilde{\psi_1}) U^\ast_{\psi_2 \psi_1}+f\Gamma_{i}^j(\psi_1)(\widetilde{Z_j}(g)\circ \widetilde{\psi_1}) U^\ast_{\psi_2 \psi_1} \\
=&\widetilde{Z_i}(fU^\ast_{\psi_1} )\Conv gU^\ast_{\psi_2} +(\Gamma_{i}^j(\psi_1) f)U^\ast_{\psi_1} \Conv \widetilde{Z_j}(gU^\ast_{\psi_2})\\
=&\widetilde{Z_i}(fU^\ast_{\psi_1} )\Conv gU^\ast_{\psi_2} +\widetilde{\gamma_{i}^j}(fU^\ast_{\psi_1} )\Conv \widetilde{Z_j}(gU^\ast_{\psi_2})\\
=&\widetilde{\Delta_{\aclsub}^{\rm{op}}(1 \acl Z_i )} (fU_{\psi_1}^{\ast} \otimes  gU_{\psi_2}^{\ast}) (\vp).
\end{aligned}
\end{align}
\end{proof}
\begin{Note}
The formula also defines a Hopf action on $\cA_{\Gamma}=C_c^{\ify}(G_1)\rtimes \Gamma$ as well for any discrete subgroup $\Gamma < G_2^{\delta}$.
\end{Note}

In order to do Hopf cyclic theory, we want to show that this Hopf action is equipped a $\delta$-invariant $\sigma$-trace. 
We introduce a functional on the discrete crossed product $\cA=C_c^{\ify}(G_1)\rtimes G_2^{\delta}$:
\begin{align}\label{tracetau}
\begin{aligned}
\tau (fU^\ast_{\psi}) = \begin{cases}  0,\quad  \text{if}\quad \psi \ne 1,\\
\displaystyle \int_{G_1} f \varpi.
\end{cases}
\end{aligned}
\end{align}
Here $\varpi$ is the volume form attached to the dual basis $\widetilde{\omega}_i$ of $\widetilde{Z}_i$ on $G_1$
\begin{align*}
\varpi =\bigwedge  \widetilde{\omega}_i ,
\end{align*}
and is in fact not always invariant under $G_2$ action, i.e.,
\begin{equation}\label{volumeform1}
\begin{aligned}
&\widetilde{\psi}_{\ast}(\widetilde{Z_i})=U_{\psi}\widetilde{Z_i}U_{\psi}^{\ast }=\Gamma_i^j(\psi^{-1})\widetilde{Z_j} , \\
&\widetilde{\psi}^{\ast}(\widetilde{\omega_i})=(\Gamma_i^j(\psi^{-1})     \circ \widetilde{\psi^{}} )            \widetilde{\omega}_j ,\\
&\widetilde{\psi}^{\ast}(\varpi)=\bigwedge  (\Gamma_i^j(\psi^{-1})     \circ \widetilde{\psi^{}} )  \widetilde{\omega}_j=(\det(\Gamma(\psi^{-1}))    \circ \widetilde{\psi^{}}  )\varpi .
\end{aligned}
\end{equation}
Evaluating at identity gives
\begin{equation}
\begin{aligned}
\widetilde{\psi}^{\ast}(\bigwedge  {\omega}_i)=\det(\gamma)(\psi^{-1}) \bigwedge  {\omega}_i .
\end{aligned}
\end{equation}
We see a representative function $\sigma:=\det(\gamma)$ here. It is easy to see that $\sigma$ is a group-like element in $\cR(G_2)$, i.e., $\sigma(\psi_1\psi_2)=\sigma(\psi_1)\sigma(\psi_2)$. \\
we also have the module function on $\Fg_1$, which is the trace of the adjoint representation $\Fg_1$ on itself, 
\begin{align*}
\delta(Z)=\textrm{Tr} ( ad_Z),
\end{align*}
and we can extend it to a character on $\cU(\Fg_1)$ and then $\cH$ by
\begin{align*}
\delta(F \acl Z_I)= \varepsilon (F) \acl \delta(Z_I),
\end{align*}
so we have a group-like element $\sigma \acl 1$ in $\cH$ and a character $\delta$ on $\cH$.  \\
Let us recall the definition of modular pairs in involution as follows:

\begin{Definition}
Let $\cH$ be a Hopf algebra, $\delta : \cH \to \Cb $ be a character and $\sigma \in \cH$ be a group-like element. The pair $(\delta, \sigma)$ is called a modular pair in involution (MPI) if
\begin{align} \label{mpi_condition}
\delta(\sigma)=1,\quad  S_{\delta}^{2}(h)=\sigma h \sigma ^{-1},
\end{align}
for all $h \in \cH$, where $S_{\delta}$ is the convolution $\delta \Conv S$:
\begin{align*}
S_{\delta}(h)=\delta(h_{(1)})S(h_{(2)}).
\end{align*}
\end{Definition}
A modular pair in involution is the easiest coefficient that we can put for Hopf cyclic cohomology. The generalized coefficients are called stable anti-Yetter-Drinfeld (SAYD) module over $\cH$.  

Assume $\cH$ is a Hopf algebra with invertible antipode, we can define Yetter-Drinfeld module over $\cH$ as in \cite[p.~181]{caenepeel_frobenius_2002}. It is shown in \cite{hajac_stable_2004} that if we replace $S$ with $S^{-1}$, $S^{-1}$ with $S$ in the definition of Yetter-Drinfeld module we can have compatible action and coaction. The new definition gives the so called anti-Yetter-Drinfeld module over $\cH$.

\begin{Definition}
Let $\cH$ be a Hopf algebra with invertible antipode. A right $\cH-$module, left $\cH-$comodule $M$ is called a right-left anti-Yetter-Drinfeld module over $\cH$ if
\begin{align} \label{sayd1}
\Db ( m \tlt h ) =S(h_{(3)}) m_{<-1>} h_{(1)} \otimes m_{<0>} \tlt h_{(2)},\qquad \forall\, m\in M, \, h \in \cH,
\end{align}
moreover, $M$ is called stable if
\begin{align} 
m_{<0>} \tlt m_{<-1>} = m. 
\end{align}
\end{Definition}

\begin{Lemma} \label{lemma2.2}
\cite[Lemma 2.2]{hajac_stable_2004}
Let the ground field $\bC$ be a right module over $\cH$ via a character $\delta$ and a left comodule over H via a group-like $\sigma$. Then $^\sigma \bC _ \delta$ is a stable right-left anti-Yetter-Drinfeld module if and only if $(\delta,\sigma)$ is a modular pair in involution.
\end{Lemma}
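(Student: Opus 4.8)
The plan is to test the two defining properties of a stable anti-Yetter-Drinfeld module on the one-dimensional object $M={}^{\sigma}\bC_{\delta}$, whose $\cH$-module structure is $1\tlt h=\delta(h)\,1$ and whose $\cH$-comodule structure is $\Db(1)=\sigma\otimes 1$; note that $\delta$ being a character is precisely what makes $M$ an $\cH$-module, and $\sigma$ being group-like (so $\Delta(\sigma)=\sigma\otimes\sigma$ and $\varepsilon(\sigma)=1$) is precisely what makes $M$ an $\cH$-comodule, with nothing further required. Under the identifications $\bC\otimes\bC\cong\bC$ and $\cH\otimes\bC\cong\cH$ one may suppress the vector $1$ throughout, so that both the stability condition and the compatibility \eqref{sayd1} become identities in $\cH$, which I then want to match with the two relations in \eqref{mpi_condition}.

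First I would dispose of stability: $m_{<0>}\tlt m_{<-1>}=1\tlt\sigma=\delta(\sigma)\,1$, so stability of $M$ holds iff $\delta(\sigma)=1$, which is the first relation of \eqref{mpi_condition}. Next, substituting $m=1$ into \eqref{sayd1}, the left-hand side becomes $\Db\big(\delta(h)\,1\big)=\delta(h)\,\sigma$ and the right-hand side becomes $S(h_{(3)})\,\sigma\,h_{(1)}\otimes\big(1\tlt h_{(2)}\big)=\big(\textstyle\sum\delta(h_{(2)})\,S(h_{(3)})\,\sigma\,h_{(1)}\big)\otimes 1$; recognizing $\sum\delta(h_{(2)})S(h_{(3)})$ as the twisted antipode $S_{\delta}$ applied to a single Sweedler leg, \eqref{sayd1} for $M$ is equivalent to
\begin{equation*}
\sum S_{\delta}(h_{(2)})\,\sigma\,h_{(1)}=\delta(h)\,\sigma,\qquad\forall\,h\in\cH .
\end{equation*}
Thus everything comes down to showing that this last identity is equivalent to the second relation of \eqref{mpi_condition}, namely $S_{\delta}^{2}(h)=\sigma h\sigma^{-1}$ for all $h$.

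The implication ``MPI $\Rightarrow$ AYD'' I would carry out by hand: assuming $S_{\delta}^{2}=\mathrm{Ad}_{\sigma}$ we may write $\sigma\,h_{(1)}=S_{\delta}^{2}(h_{(1)})\,\sigma$, and then, using that $S_{\delta}$ is an algebra anti-homomorphism (being $\delta\Conv S$ with $\delta$ a character and $S$ anti-multiplicative) with $S_{\delta}(1)=1$, together with the twisted-antipode identity $\sum S_{\delta}(h_{(1)})\,h_{(2)}=\delta(h)\,1$, one obtains
\begin{equation*}
\sum S_{\delta}(h_{(2)})\,\sigma\,h_{(1)}=\sum S_{\delta}(h_{(2)})\,S_{\delta}^{2}(h_{(1)})\,\sigma=\sum S_{\delta}\!\big(S_{\delta}(h_{(1)})\,h_{(2)}\big)\,\sigma=S_{\delta}\big(\delta(h)\,1\big)\,\sigma=\delta(h)\,\sigma .
\end{equation*}
For the converse I would recast the identity as $\sum S_{\delta}(h_{(2)})\,\mathrm{Ad}_{\sigma}(h_{(1)})=\delta(h)\,1$, precompose it with the Hopf-algebra automorphism $\mathrm{Ad}_{\sigma^{-1}}$ (using $\delta\circ\mathrm{Ad}_{\sigma^{-1}}=\delta$, which follows from $\delta$ being a character together with $\delta(\sigma)=1$ already in hand), and then exploit that $\id_{\cH}$ is invertible for the $\cH^{\cop}$-convolution on $\Hom(\cH,\cH)$, with inverse $S^{-1}$ --- this is exactly the point at which the invertibility of the antipode of $\cH$ proved in Proposition~\ref{sinvertible} is used --- in order to solve the equation for $S_{\delta}$; substituting the resulting formula into $S_{\delta}\circ S_{\delta}$ and simplifying with $S(\sigma)=\sigma^{-1}$ and $\Delta(\sigma)=\sigma\otimes\sigma$ (which make $S^{-1}$ and $\mathrm{Ad}_{\sigma}$ commute) should return $S_{\delta}^{2}=\mathrm{Ad}_{\sigma}$.

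The step I expect to be the real work is this converse: one has to be disciplined about which one-sided convolution identities for $S$ and $S^{-1}$ are being invoked --- here the ``transposed'' identity $\sum S^{-1}(h_{(2)})\,h_{(1)}=\varepsilon(h)\,1$ rather than $\sum S(h_{(1)})\,h_{(2)}=\varepsilon(h)\,1$ --- and to insert $\delta(\sigma)=1$ at the correct moment so that the Sweedler bookkeeping collapses. The stability reduction and the forward implication, by contrast, are short unwindings of the definitions.
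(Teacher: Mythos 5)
Your proposal is correct, and its skeleton --- reducing both stability and condition \eqref{sayd1} on the one-dimensional module to the pair of identities $\delta(\sigma)=1$ and $\sum S_{\delta}(h_{(2)})\,\sigma\,h_{(1)}=\delta(h)\,\sigma$, then proving the forward implication by writing $\sigma h_{(1)}=S_{\delta}^{2}(h_{(1)})\sigma$ and collapsing with the anti-multiplicativity of $S_{\delta}$ and the identity $\sum S_{\delta}(h_{(1)})h_{(2)}=\delta(h)1$ --- is exactly what the paper does. Where you genuinely diverge is the converse. The paper never inverts anything: it starts from the tautology $h=\delta\big(h_{(1)}S(h_{(2)})\big)h_{(3)}$, substitutes the hypothesized identity $\delta(k)=\sigma^{-1}S_{\delta}(k_{(2)})\sigma k_{(1)}$ with $k=S(h_{(2)})$, and lets the Sweedler legs cancel to produce $h=\sigma^{-1}S_{\delta}^{2}(h)\sigma$ directly; this is shorter and makes no use of $S^{-1}$. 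Your route instead solves the convolution equation for $S_{\delta}$ using $S^{-1}$ as the inverse of $\id_{\cH}$ for the $\Delta^{\cop}$-convolution, arriving at the closed formula $S_{\delta}(h)=\delta(h_{(2)})\,\sigma S^{-1}(h_{(1)})\,\sigma^{-1}$ and then squaring. I checked that this does close up: anti-multiplicativity of $S_{\delta}$ together with $S_{\delta}(\sigma^{\pm1})=\sigma^{\mp1}$ (here $\delta(\sigma)=1$ enters) and $\Delta(S^{-1}x)=S^{-1}(x_{(2)})\otimes S^{-1}(x_{(1)})$ give $S_{\delta}^{2}(h)=\delta\big(S^{-1}(h_{(2)})\big)\,\delta(h_{(3)})\;\sigma h_{(1)}\sigma^{-1}$, and the scalar factor is $\varepsilon(h_{(2)})$ because $\delta$ is a character, so $\delta\big(S^{-1}(k_{(1)})k_{(2)}\big)=\delta\big(k_{(2)}S^{-1}(k_{(1)})\big)=\varepsilon(k)$. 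So the argument is sound, but you should write out this last collapse rather than leaving it at ``should return $S_{\delta}^{2}=\mathrm{Ad}_{\sigma}$''; the trade-off is that your version yields an explicit formula for $S_{\delta}$ at the cost of invoking Proposition~\ref{sinvertible}, whereas the paper's substitution trick needs no invertibility of $S$ at all.
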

\begin{proof}
It's a Lemma in~\cite{hajac_stable_2004}. However we would like to give an illustration here:\\
The second (stability) condition is obvious, so we will just show the first one. On one side, if 
\begin{align*}
S_{\delta}^{2}(h)=\sigma h \sigma ^{-1},
\end{align*}
we calculate 
\begin{align}
\begin{aligned}
S(h_{(3)})1_{<-1>}h_{(1)} \otimes 1_{<0>}\cdot h_{(2)}&=S(h_{(3)})\sigma h_{(1)} \otimes \delta( h_{(2)})\\
&=S_{\delta}(h_{(2)})\sigma h_{(1)} \sigma^{-1} \sigma \otimes 1\\
&=S_{\delta}(h_{(2)}) S_{\delta}^2( h_{(1)} ) \sigma \otimes 1\\
&=S_{\delta}( S_{\delta} (h_{(1)})h_{(2)})  \sigma \otimes 1\\
&=S_{\delta}( \delta (h_{(1)}) S(h_{(2)}) h_{(3)}) \sigma \otimes 1\\
&=S_{\delta}( \delta (h)\eta) \sigma \otimes 1\\
&= \sigma \otimes \delta(h)\\
&=\blacktriangledown(1\cdot h).
\end{aligned}
\end{align}
On the other side, if 
\begin{align*}
\sigma \otimes \delta(h)=S(h_{(3)})\sigma h_{(1)} \otimes \delta( h_{(2)}), 
\end{align*}
then
\begin{align*}
\delta(h)=\sigma^{-1}S_{\delta}(h_{(2)})\sigma h_{(1)},
\end{align*}
we calculate 
\begin{align}
\begin{aligned}
h&=\delta(h_{(1)} S(h_{(2)}))h_{(3)}\\
&=\delta(h_{(1)})\sigma^{-1}S_{\delta}((S(h_{(2)}))_{(2)})\sigma (S(h_{(2)}))_{(1)} h_{(3)} \\
&=\delta(h_{(1)})\sigma^{-1}S_{\delta}(S(h_{(2)(1)}))\sigma (S(h_{(2)(2)}))h_{(3)} \\
&=\sigma^{-1}S_{\delta}( \delta(h_{(1)}) S(h_{(2)}))\sigma \varepsilon(  h_{(3)}) \eta \\
&=\sigma^{-1}S_{\delta}^2 (h)\sigma .
\end{aligned}
\end{align}
\end{proof}
We also have
\begin{Lemma}\label{sayd}
If $^\sigma \bC _ \delta$ is a right-left stable anti-Yetter-Drinfeld module over $\cH$, then $^{S^{-1}(\sigma)} \bC _ \delta$ is a right-left stable anti-Yetter-Drinfeld module over $\cH^{\cop}$.
\end{Lemma}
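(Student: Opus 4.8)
The plan is to spell out both the hypothesis and the desired conclusion as concrete identities in $\cH$, and then to pass from one to the other by applying the antipode. First I would record what the hypothesis says concretely. For the one-dimensional module--comodule $^{\sigma}\bC_{\delta}$ --- the ground field, made into a right $\cH$-module via the character $\delta$ and a left $\cH$-comodule via $\Db(1)=\sigma\otimes 1$ --- feeding this datum into the definition of a right-left anti-Yetter--Drinfeld module and into the stability condition, one sees that ``$^{\sigma}\bC_{\delta}$ is a stable anti-Yetter--Drinfeld module over $\cH$'' is equivalent to the two identities $\delta(\sigma)=1$ and $\delta(h)\,\sigma=\delta(h_{(2)})\,S(h_{(3)})\,\sigma\,h_{(1)}$ for all $h\in\cH$. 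By Lemma~\ref{lemma2.2} this is also the statement that $(\delta,\sigma)$ is a modular pair in involution for $\cH$, but the two displayed identities are the most direct thing to work with.

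Next I would run the same recipe for $\cH^{\cop}$, whose antipode is $S^{-1}$ (recall $S$ is invertible), whose iterated coproduct is $h\mapsto h_{(3)}\otimes h_{(2)}\otimes h_{(1)}$, and which carries the same algebra structure, so that $\delta$ is still a character and $S^{-1}(\sigma)$ is group-like (as $S^{-1}$ is an anti-morphism of coalgebras). Thus ``$^{S^{-1}(\sigma)}\bC_{\delta}$ is a stable anti-Yetter--Drinfeld module over $\cH^{\cop}$'' is equivalent to $\delta(S^{-1}(\sigma))=1$ together with $\delta(h)\,S^{-1}(\sigma)=\delta(h_{(2)})\,S^{-1}(h_{(1)})\,S^{-1}(\sigma)\,h_{(3)}$ for all $h\in\cH$. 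The second of these is exactly the image of the $\cH$-identity under the anti-algebra endomorphism $S^{-1}$: applying $S^{-1}$ to both sides of $\delta(h)\sigma=\delta(h_{(2)})S(h_{(3)})\sigma h_{(1)}$ leaves the scalar $\delta(\cdot)$ untouched and carries $S(h_{(3)})\sigma h_{(1)}$ to $S^{-1}(h_{(1)})\,S^{-1}(\sigma)\,S^{-1}(S(h_{(3)}))=S^{-1}(h_{(1)})\,S^{-1}(\sigma)\,h_{(3)}$. The first follows from $\delta(\sigma)=1$, since $S^{-1}(\sigma)=\sigma^{-1}$ (because $\sigma^{-1}$ is group-like, hence $S(\sigma^{-1})=\sigma$) and then $\delta(\sigma^{-1})=\delta(\sigma)^{-1}=1$ by multiplicativity of $\delta$. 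This would complete the proof.

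The argument is essentially formal, so the only step that needs real care is translating the anti-Yetter--Drinfeld and stability conditions to $\cH^{\cop}$: one has to reverse the Sweedler legs of the iterated coproduct and replace $S$ by $S^{-1}$ so that the resulting $\cH^{\cop}$-identity lines up exactly with the $S^{-1}$-image of the $\cH$-identity --- there is no genuine obstacle beyond that bookkeeping. As a consistency check, invoking Lemma~\ref{lemma2.2} now for $\cH^{\cop}$ shows that the two $\cH^{\cop}$-identities just obtained are precisely the assertion that $(\delta,S^{-1}(\sigma))$ is a modular pair in involution for $\cH^{\cop}$.
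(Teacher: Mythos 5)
Your proof is correct and follows essentially the same route as the paper, which simply says "apply $S^{-1}$ to the anti-Yetter--Drinfeld identity"; you carry out exactly that computation, correctly accounting for the reversed Sweedler legs of $\Delta^{\cop}$ and the antipode $S^{-1}$ of $\cH^{\cop}$, and verify stability via $S^{-1}(\sigma)=\sigma^{-1}$. The only difference is that you spell out the bookkeeping the paper leaves implicit.
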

\begin{proof}
straightforward calculation. Apply $S^{-1}$ to equation~\ref{sayd1}.
\end{proof}
Now we are ready to show that the pair we constructed is a modular pair in involution.
\begin{Proposition}
The pair $(\delta,\sigma^{})$ is a modular pair in involution for the Hopf algebra $\cH$.  
\end{Proposition}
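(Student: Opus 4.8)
The plan is to check the two defining identities \eqref{mpi_condition} directly. The first, $\delta(\sigma)=1$, is immediate: $\delta(\sigma)=\delta(\det\g\acl 1)=\ve(\det\g)=\det\big(\g(e)\big)$, and by \eqref{coact} the matrix $\g(e)$ has entries $\g_i^j(e)=\lag Z_i,\omega_j\rag=\delta_i^j$, so $\g(e)$ is the identity and $\delta(\sigma)=1$.

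For the involution $S_\delta^{2}(h)=\sigma h\sigma^{-1}$, first note that $S_\delta$ is an algebra anti-homomorphism (because $S=S_{\aclsub}^{-1}$ is one and $\delta$ is a character), so $S_\delta^{2}$ is an algebra endomorphism of $\cH$; conjugation by the invertible group-like $\sigma$ is also an algebra endomorphism. Since $\cH$ is generated as an algebra by the elements $F\acl 1$ and $1\acl Z_i$, it is enough to verify $S_\delta^{2}(h)=\sigma h\sigma^{-1}$ on these. The case $F\acl 1$ is painless: the $\delta$-twist is trivial here, so $S_\delta(F\acl 1)=S(F\acl 1)=S_{\cR}(F)\acl 1$ by Proposition~\ref{sinvertible}, and $S_{\cR}^{2}=\mathrm{id}$ since $\cR(G_2)$ is commutative, whence $S_\delta^{2}(F\acl 1)=F\acl 1$; on the other side $\sigma(F\acl 1)\sigma^{-1}=F\acl 1$ because $\det\g$ is central in $\cR(G_2)$.

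The substance is the case $1\acl Z_i$. Using $\blacktriangledown(Z_i)=Z_j\ot S(\g_i^j)$ and the formula of Proposition~\ref{sinvertible}, I would first compute $S_\delta(1\acl Z_i)=-\g_i^j\acl Z_j+\delta(Z_i)(1\acl 1)$. Applying $S_\delta$ once more — as an anti-homomorphism, using the $F\acl 1$ case for $\g_i^j\acl 1$, the Leibniz rule for the $\cU(\Fg_1)$-action on $\cR(G_2)$, and the relation $\sum_j\g_j^k\,S(\g_i^j)=\delta_i^k$ coming from \eqref{one-cocycle condition} — yields $1\acl Z_i$ plus a $\acl 1$-term $\big(\delta(Z_i)-\sum_{j,k}(Z_k\trt\g_j^k)\,S(\g_i^j)-[\,\psi\mapsto\Tr(\ad_{\psi\trt Z_i})\,]\big)\acl 1$. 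Here $\psi\trt$ acts on $\Fg_1$ by Lie algebra automorphisms — a feature of the matched-pair-of-Hopf-algebras structure on $(\cU(\Fg_1),\cR(G_2))$ — so $\Tr(\ad_{\psi\trt Z_i})\equiv\Tr(\ad_{Z_i})=\delta(Z_i)$ and the two constant contributions cancel:
\[
S_\delta^{2}(1\acl Z_i)=1\acl Z_i-\Big(\sum_{j,k}(Z_k\trt\g_j^k)\,S(\g_i^j)\Big)\acl 1 .
\]
On the other hand, from the product \eqref{product1} together with $Z_i\trt(\det\g)^{-1}=-(\det\g)^{-2}(Z_i\trt\det\g)$,
\[
\sigma(1\acl Z_i)\sigma^{-1}=1\acl Z_i-\big((\det\g)^{-1}(Z_i\trt\det\g)\big)\acl 1 .
\]
So the whole proposition reduces to the identity in $\cR(G_2)$
\[
\sum_{j,k}(Z_k\trt\g_j^k)\,S(\g_i^j)=(\det\g)^{-1}(Z_i\trt\det\g).
\]

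This last identity is the hard part. By Jacobi's formula its right-hand side equals $\Tr\big(\g^{-1}(Z_i\trt\g)\big)$, so it asserts that a particular partial trace of the matrix family $\big(Z_k\trt\g\big)_k$ coincides with an honest trace; I would prove it by writing $\g_j^k(\psi\tlt\vp)=\Gamma_j^k(\psi)(\vp)$ as in \eqref{ytltx}, differentiating the cocycle relation \eqref{1-gamma} and the definition \eqref{gamma} along $G_1$ at the identity, and re-indexing via the matched-pair axioms — concretely this is just the transformation law \eqref{volumeform1} of the volume form $\varpi$ under $G_2$, differentiated once more. Alternatively, and more cheaply, this identity is precisely the assertion that $(\delta,\sigma^{-1})$ is a modular pair in involution for $\cR(G_2)\acl\cU(\Fg_1)$ (proved by Rangipour and S\"utl\"u), and Lemma~\ref{sayd} then carries it over to $\cH=\big(\cR(G_2)\acl\cU(\Fg_1)\big)^{\cop}$ since $S_{\aclsub}^{-1}(\sigma^{-1})=\sigma$. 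Everything apart from this identity is routine bookkeeping with the bicrossed product, so it is the main obstacle.
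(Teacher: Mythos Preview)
Your ``alternative, cheaper'' route at the end --- cite Rangipour--S\"utl\"u for the MPI $(\delta,\sigma^{-1})$ on $\cR(G_2)\acl\cU(\Fg_1)$, then pass through Lemma~\ref{lemma2.2} and Lemma~\ref{sayd} to obtain the MPI $(\delta,\sigma)$ on $\cH=(\cR(G_2)\acl\cU(\Fg_1))^{\cop}$ --- \emph{is} the paper's proof, verbatim. So that part is fine.

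The direct computation you sketch, however, has a real gap at the step where you claim ``$\psi\trt$ acts on $\Fg_1$ by Lie algebra automorphisms,'' and use this to cancel $\delta(Z_i)$ against $[\psi\mapsto\Tr(\ad_{\psi\trt Z_i})]$. For a general matched pair this is \emph{false}: the infinitesimal version, namely the third relation in~\eqref{lraction},
\[
X\trt[Y_1,Y_2]=[X\trt Y_1,Y_2]+[Y_1,X\trt Y_2]+(X\tlt Y_1)\trt Y_2-(X\tlt Y_2)\trt Y_1,
\]
shows that $X\trt(-)$ is a derivation of $\Fg_1$ only when the back-action $\tlt$ of $\Fg_1$ on $\Fg_2$ vanishes. (The paper itself only assumes $\Fh_2$ acts on $\Fg_1$ by derivations much later, as an extra hypothesis for Section~5, not here.) Consequently your reduced identity $\sum_{j,k}(Z_k\trt\g_j^k)S(\g_i^j)=(\det\g)^{-1}(Z_i\trt\det\g)$ is \emph{not} by itself equivalent to the MPI; what is equivalent is the full relation including the $\big(\delta(Z_i)-\sum_j\delta(Z_j)S(\g_i^j)\big)$ term. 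So when you then say ``this identity is precisely the assertion that $(\delta,\sigma^{-1})$ is an MPI for $\cR(G_2)\acl\cU(\Fg_1)$,'' that is not quite right either --- it is the \emph{un}reduced identity that is equivalent. If you drop the erroneous cancellation and invoke Rangipour--S\"utl\"u for the full identity directly, you recover exactly the paper's argument; the intermediate computation then adds nothing.
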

\begin{proof}
\cite[Thm~3.2]{rangipour_van_2012} shows that $(\delta,S\big(\det(\gamma) \big)=\sigma^{-1})$ is a modular pair in involution for the Hopf algebra $\cR(G_2) \acl \cU(\Fg_1)$(we have used antipode in our right coaction so their $\sigma$ is our $\sigma^{-1})$). By~\ref{lemma2.2}, $^{\sigma^{-1}}\hspace{-3pt} \bC _{\delta} $ is a SAYD over $\cR(G_2) \acl \cU(\Fg_1)$. Use previous lemma, $^{\sigma} \bC _{\delta}$ is a right-left SAYD over $\cR(G_2) \acl \cU(\Fg_1)^{\cop}$ and $(\delta,\sigma^{})$ is a modular pair in involution for the Hopf algebra $\cH$.
\end{proof}
Let us now calculate $S_{\delta}$ on these generators for future use.
\begin{align}
\begin{aligned}
S_{\delta}(1 \acl Z_i)&=\mu (\delta \otimes S)\Delta(1 \acl Z_i)\\
&=\mu (\delta \otimes S_{\aclsub}^{-1})\Delta_{\aclsub}^{\rm{op}}(1 \acl Z_i) \\
&=\mu (\delta \otimes S_{\aclsub}^{-1})(1 \acl Z_i \otimes 1 \acl 1 + S(\gamma_i^j) \acl 1 \otimes 1 \acl Z_j)\\
&=\delta(Z_i)   1 \acl 1  +\delta_i^j S_{\aclsub}^{-1} (1 \acl Z_j)\\
&=\delta(Z_i)   1 \acl 1  + S_{\aclsub}^{-1} (1 \acl Z_i)\\
&=\delta(Z_i)   1 \acl 1  + \gamma_i^j\acl S(Z_j), 
\end{aligned}
\end{align}
and
\begin{align}
\begin{aligned}
S_{\delta}(F \acl 1)&=\mu (\delta \otimes S)\Delta(F \acl 1)\\
&=\mu (\delta \otimes S_{\aclsub}^{-1})\Delta_{\aclsub}^{\rm{op}}(F \acl 1) \\
&=\mu (\delta \otimes S_{\aclsub}^{-1})(F_{(2)}\acl 1)\otimes (F_{(1)}\acl 1)\\
&=\varepsilon (F_{(2)}) S(F_{(1)}) \acl 1\\
&=S(F_{}) \acl 1.
\end{aligned}
\end{align}

With a module pair in involution in hand we can associate a cyclic structure to $\cH$ as suggested in~\cite{connes_cyclic_2000}. Specifically, we set $C^n(\cH)=\cH^{\otimes n}$ for $n \ge 1$, and $C^0(\cH)=\bC$. The face maps $\delta_i:C^{n-1}(\cH) \to C^{n}(\cH)$, $0 \le i \le n$, are:
\begin{align}
\begin{aligned}\label{face}
\delta_{0}(h^1 \otimes \cdots \otimes h^{n-1} )&=1\otimes h^1 \otimes \cdots \otimes h^{n-1}, \\
\delta_{i}(h^1 \otimes \cdots \otimes h^{n-1} )&= h^1 \otimes \cdots\otimes \Delta (h^i) \otimes \cdots \otimes h^{n-1}, \quad 1 \le i \le n-1,\\
\delta_{n}(h^1 \otimes \cdots \otimes h^{n-1} )&=h^1 \otimes \cdots \otimes h^{n-1} \otimes \sigma^{} ,
\end{aligned}
\end{align}
for $n  > 1$, and 
\begin{align*}
\delta_{0}(1)&=1 ,\qquad \delta_{1}(1)=\sigma^{},
\end{align*}
for $n=1$.

The degeneracy maps $\sigma_i:C^{n+1}(\cH) \to C^{n}(\cH)$, $0 \le i \le n$, are:
\begin{align}\label{degeneracy}
\sigma_{i}(h^1 \otimes \cdots \otimes h^{n+1} )= h^1 \otimes \cdots\otimes \varepsilon (h^{i+1}) \otimes \cdots \otimes h^{n+1},
\end{align}
for $n  > 0$, and 
\begin{align*}
\sigma_{0}(h)=\varepsilon(h),
\end{align*}
for $n=0$.

The cyclic operator $\tau_{n}:C^{n}(\cH) \to C^{n}(\cH)$, is defined as:
\begin{align}
\begin{aligned}
\tau_{n}(h^1 \otimes \cdots \otimes h^{n} )=&\big( \Delta^{n-1} S_{\delta}(h^1) \big) \cdot h^2 \otimes \cdots\otimes  h^{n} \otimes \sigma^{}, \label{cyclic}\\
=&\sum S(h^1_{(n)})h^2\otimes \cdots \otimes  S(h^1_{(2)})h^n \otimes S_{\delta}(h^1_{(1)})\sigma^{}.
\end{aligned}
\end{align}

The periodic Hopf cyclic cohomology $HP^{\bullet}(\cH;^\sigma \bC _ \delta)$ of $\cH$ with coefficients in the modular pair $(\delta,\sigma)$ is, by definition (cf. \cite{connes_hopf_1998,connes_cyclic_1999}), the $\Zb_2$-graded cohomology of the total complex $CC^{\bullet}(\cH;^\sigma \bC _ \delta)$ associated to the bicomplex $\{CC^{\bullet}(\cH;^\sigma \bC _ \delta), b, B\}$, where

\begin{align}
\begin{aligned}
&CC^{p,q}(\cH;^\sigma \bC _ \delta)=\begin{cases}  0,\quad  \text{if}\quad p < q,\\
\displaystyle C^{p-q}(\cH;^\sigma \bC _ \delta),\quad  \text{if}\quad q \ge p,
\end{cases} \\
&b: C^{n}(\cH;^\sigma \bC _ \delta) \to C^{n+1}(\cH;^\sigma \bC _ \delta)\\
&b=\sum_{i=0}^{n} (-1)^{i}\delta_{i}, \\
&B: C^{n}(\cH;^\sigma \bC _ \delta) \to C^{n-1}(\cH;^\sigma \bC _ \delta)\\
&B= \big(\sum_{i=0}^{n-1}  (-1)^{(n-1)i} \tau^{i}_{n-1} \big) \sigma_{n-1}\tau_{n}(1-(-1)^{n}\tau_{n}) .
\end{aligned}
\end{align}

\begin{Proposition}[\cite{connes_hopf_1998,connes_cyclic_1999}]
Let $\cH$ be a Hopf algebra endowed with a modular pair in involution $(\delta,\sigma^{})$. Then $\cH^{\natural}_{(\delta,\sigma^{})}=\{C^n(\cH)\}_{n \ge 0}$ equipped with the operators given by~\ref{face} to~\ref{cyclic} is a module over the cyclic category $\Lambda$.
\end{Proposition}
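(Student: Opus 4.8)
The plan is to verify that the cofaces $\delta_i$, the codegeneracies $\sigma_i$ and the cyclic operators $\tau_n$ of~\eqref{face}--\eqref{cyclic} satisfy the defining relations of the cyclic category $\Lambda$ in its presentation by generators and relations: the simplicial identities among the $\delta_i$ and $\sigma_i$; the compatibilities $\tau_n\delta_i=\delta_{i-1}\tau_{n-1}$ for $1\le i\le n$ and $\tau_n\delta_0=\delta_n$; the relations $\tau_n\sigma_i=\sigma_{i-1}\tau_{n+1}$ for $1\le i\le n$ and $\tau_n\sigma_0=\sigma_n\tau_{n+1}^{2}$; and the periodicity relation $\tau_n^{\,n+1}=\mathrm{id}$. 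Since the statement is that of Connes and Moscovici (\cite{connes_hopf_1998,connes_cyclic_1999}), this amounts to a verification, and I will indicate where each structural hypothesis is used.

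For the purely simplicial part, the identities involving only the interior slots $1\le i\le n-1$ follow immediately from coassociativity of $\Delta$ and the counit axiom of $\cH$. The boundary cases reduce to the fact that $1\in\cH$ and $\sigma\in\cH$ are group-like: $\Delta(1)=1\otimes 1$ and $\Delta(\sigma)=\sigma\otimes\sigma$ make the insertion of $1$ on the left ($\delta_0$) and of $\sigma$ on the right ($\delta_n$) compatible with the coproduct appearing in an adjacent face, and $\varepsilon(1)=\varepsilon(\sigma)=1$ handles the degeneracies. The compatibilities of $\tau_n$ with the cofaces and codegeneracies are checked by substituting the expanded form of $\tau_n$ from~\eqref{cyclic} and comparing tensor slot by slot; here one uses that $S$ is an anti-algebra and anti-coalgebra map, that $\delta$ is a character so that $S_\delta=\delta\Conv S$ is an anti-algebra map with $\varepsilon\circ S_\delta=\delta$ and a computable coproduct, and once again group-likeness of $\sigma$ to slide the trailing $\sigma$ past coproducts. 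Together these relations already say that $\{C^n(\cH)\}_{n\ge 0}$ carries a paracyclic-type structure for \emph{any} group-like $\sigma$ and character $\delta$, i.e.\ all relations of $\Lambda$ except possibly periodicity.

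The single point at which the modular pair in involution is genuinely needed, and which I expect to be the main obstacle, is the periodicity relation $\tau_n^{\,n+1}=\mathrm{id}$. Iterating~\eqref{cyclic}, one shows by induction on $k$ that $\tau_n^{\,k}$ cyclically rotates the $n$ tensor legs while dressing each rotated copy of the former leading factor with successively higher iterates of $S$ (propagated by $\Delta^{n-1}$) and accumulating a controlled power of the group-like element $\sigma$; the model case $n=1$ already exhibits the pattern, since $\tau_1^{2}(h)=S_\delta(S_\delta(h)\sigma)\sigma=\delta(\sigma)\,\sigma^{-1}S_\delta^{2}(h)\,\sigma$. After $n+1$ steps every one of the original legs $h^1,\dots,h^n$ has been acted on by the combination $\sigma^{-1}S_\delta^{2}(\,\cdot\,)\sigma$, with one residual $\sigma$ in the last slot and a residual scalar $\delta(\sigma)$; the identity $S_\delta^{2}(h)=\sigma h\sigma^{-1}$ of~\eqref{mpi_condition} collapses the dressing on each leg and $\delta(\sigma)=1$ disposes of the scalar, returning the identity. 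This is, leg by leg, precisely the bookkeeping underlying Lemma~\ref{lemma2.2}; the only delicate part is tracking which power of $S$ and which power of $\sigma$ attaches to which leg after $k$ iterations, and that is exactly what the induction on $k$ is designed to control.
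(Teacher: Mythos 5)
Your proposal is correct and is in substance the proof the paper relies on: the paper itself gives only a citation to Connes--Moscovici together with the single remark that it is the modular-pair-in-involution condition~\ref{mpi_condition} that ensures cyclicity, which is exactly the crux you isolate (all relations of $\Lambda$ except $\tau_n^{\,n+1}=\mathrm{id}$ hold for any character $\delta$ and group-like $\sigma$, and the MPI identity is what collapses $\tau_n^{\,n+1}$ to the identity, as your $n=1$ computation correctly illustrates). You supply more of the verification than the paper does, so there is nothing to object to beyond noting that the general-$n$ induction is described rather than carried out, which is acceptable for a result the paper itself only cites.
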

\begin{proof} 
See~\cite[Thm~1]{connes_cyclic_2000} for proof. Note it is the MPI condition~\ref{mpi_condition} that ensured the cyclicity.
\end{proof}

Now we will take a deeper look at the Hopf action and the trace we just defined.

\begin{Proposition}
For any $a,b \in \cA$ and $h \in \cH$ one has
\begin{align}
\begin{aligned}
&\tau(ab)=\tau(b \sigma^{}(a)),\\
&\tau(h(a))=\delta(h)\tau(a),\\
&\tau(h(a)b)=\tau(a S_{\delta}(h)(b)),
\end{aligned}
\end{align}
hence $\tau$ is a $\delta$-invariant $\sigma^{}$-trace under the Hopf action.
\end{Proposition}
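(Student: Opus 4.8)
The plan is to verify the three identities in order, since each one builds on the previous. For the first identity $\tau(ab)=\tau(b\,\sigma(a))$, it suffices by linearity to take $a=fU_{\psi_1}^\ast$ and $b=gU_{\psi_2}^\ast$. Then $ab = f\,(g\circ\widetilde{\psi_1})\,U^\ast_{\psi_2\psi_1}$, which has nonzero trace only when $\psi_2\psi_1=1$, i.e. $\psi_2=\psi_1^{-1}$; in that case $\tau(ab)=\int_{G_1} f\,(g\circ\widetilde{\psi_1})\,\varpi$. On the other side, $\sigma(a) = \widetilde{\sigma\acl 1}(fU_{\psi_1}^\ast)$, whose value at $\vp$ is $\sigma(\psi_1\tlt\vp)f(\vp)U_{\psi_1}^\ast$; multiplying by $b=gU_{\psi_2}^\ast$ and taking the trace again forces $\psi_2=\psi_1^{-1}$, giving $\int_{G_1} g\cdot(\sigma(\psi_1\tlt\,\cdot\,)f)\circ\widetilde{\psi_2}\,\varpi$. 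The two expressions match precisely because of the change-of-variables formula $\widetilde{\psi_1}^\ast(\varpi)=(\det\Gamma(\psi_1^{-1})\circ\widetilde{\psi_1})\,\varpi$ from~(\ref{volumeform1}) together with $\sigma=\det(\gamma)$ and the identity $\Gamma_i^j(\psi^{-1})\circ\widetilde\psi=(\Gamma^{-1})_i^j(\psi)$ from~(\ref{1-gamma}): substituting $\vp\mapsto\widetilde{\psi_1}(\vp)$ in one integral produces exactly the Jacobian factor $\sigma(\psi_1\tlt\vp)$ appearing in the other.

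For the second identity $\tau(h(a))=\delta(h)\tau(a)$, I would again reduce to $a=fU_\psi^\ast$ and check on the generating set $\{F\acl Z_I\}$. When $h=F\acl 1$, the action multiplies $f$ by $F(\psi\tlt\vp)$ and the support condition $\psi=1$ forces this to be $F(e)f(\vp)=\varepsilon(F)f(\vp)$, so $\tau((F\acl1)(a))=\varepsilon(F)\tau(a)$, matching $\delta(F\acl1)=\varepsilon(F)$. When $h=1\acl Z_I$, the action is differentiation by left-invariant vector fields, and $\tau((1\acl Z_I)(a))=\int_{G_1}\widetilde{Z_1}^{i_1}\cdots\widetilde{Z_p}^{i_p}(f)\,\varpi$. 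This is handled by integration by parts: for a single left-invariant vector field $\widetilde Z$, one has $\int_{G_1}\widetilde Z(f)\,\varpi = \delta(Z)\int_{G_1}f\,\varpi$ because the Lie derivative of $\varpi$ along $\widetilde Z$ is $-\Tr(\ad_Z)\varpi = -\delta(Z)\varpi$ (the standard fact relating the modular function of a Lie group to the trace of the adjoint representation, combined with $f$ having compact support so the total derivative integrates to zero). Iterating gives the factor $\delta(Z_I)=\delta(Z_1)^{i_1}\cdots\delta(Z_p)^{i_p}$, which is exactly $\delta(1\acl Z_I)$. The general case $h=F\acl Z_I$ follows from the product relation~(\ref{zif}) expressing $\widetilde{Z_I}\widetilde F$ in terms of $\widetilde F\,\widetilde{(Z_I)_{(1)}}$ and $\widetilde{(Z_I)_{(2)}\trt F}$, combined with the two special cases; alternatively since $F\acl Z_I = (F\acl 1)(1\acl Z_I)$ in the algebra, one composes the two cases directly using that the action is an algebra action and $\delta$ is a character.

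The third identity $\tau(h(a)b)=\tau(a\,S_\delta(h)(b))$ is the adjointness statement, and it follows formally from the first two: write $h(a)b$ and apply the first identity to move $\sigma$, then use that the Hopf action is an algebra action so $h$ acts on the product $(a)(b)$ via the coproduct, i.e. $h(ab)=h_{(1)}(a)\,h_{(2)}(b)$; combined with the computation of $S_\delta$ on generators done just above (the explicit formulas $S_\delta(1\acl Z_i)=\delta(Z_i)\,1\acl 1+\gamma_i^j\acl S(Z_j)$ and $S_\delta(F\acl 1)=S(F)\acl 1$) this is a bookkeeping exercise tracking how the coproduct distributes the action. Concretely: $\tau(h(a)\,b)$ — apply the second identity in the form $\tau(h_{(1)}(a)h_{(2)}(b)) = $ rearrangement — actually the cleanest route is $\tau(h(a)b)$, insert $1=S_\delta(h_{(3)})h_{(4)}$-type manipulation or simply: by the antipode axioms $ab = (h_{(1)}\circ S_\delta$-adjoint$)\cdots$; I would follow the standard argument that for a $\delta$-invariant trace the pairing $\tau(h(a)b)$ equals $\tau(a\,S_\delta(h)(b))$ using only $\tau\circ h = \delta(h)\tau$, the algebra-action property, and $\delta(h_{(1)})S(h_{(2)})h_{(3)}=\delta(h)1$. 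The main obstacle is the second identity, specifically justifying $\int_{G_1}\widetilde Z(f)\,\varpi=\delta(Z)\int_{G_1}f\,\varpi$ — this requires care with the non-unimodularity of $G_1$ and the precise sign, and one must confirm that the compact support of $f\in C_c^\infty(G_1)$ genuinely kills the boundary term; everything else is either a change of variables (first identity) or formal Hopf-algebra manipulation (third identity).
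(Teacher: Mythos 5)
Your proposal is correct, and for the first two identities it follows essentially the same path as the paper: the trace identity is the change-of-variables computation with the Jacobian factor $\det(\Gamma(\psi))\circ\widetilde{\psi^{-1}}$ evaluated through $\Gamma_i^j(\psi)(\vp_0)=\gamma_i^j(\psi\tlt\vp_0)$, and the $\delta$-invariance is checked on the generators $F\acl 1$ (where the support condition $\psi=1$ forces $F(1)=\varepsilon(F)$) and $1\acl Z_i$ (where the paper writes the same modular-function fact as $\varpi(\vp\exp(-tZ_i))=\Delta(tZ_i)\varpi(\vp)$ rather than via the Lie derivative $L_{\widetilde{Z}}\varpi=-\delta(Z)\varpi$; the two formulations are equivalent and both yield $\int_{G_1}\widetilde{Z}(f)\,\varpi=\delta(Z)\int_{G_1}f\,\varpi$).

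Where you genuinely diverge is the third identity. The paper proves $\tau(h(a)b)=\tau(a\,S_\delta(h)(b))$ by brute force on the generators $F\acl 1$ and $1\acl Z_i$ (invoking anti-multiplicativity of $S_\delta$ to reduce to generators), which for $1\acl Z_i$ requires redoing the change-of-variables and Leibniz computations and matching the result against the explicit formula $S_\delta(1\acl Z_i)=\delta(Z_i)\,1\acl 1+\gamma_i^j\acl S(Z_j)$ computed just before the Proposition. You instead derive it formally: $\tau(h(a)b)=\tau\bigl(h_{(1)}(a)\,(h_{(2)}S(h_{(3)}))(b)\bigr)=\tau\bigl(h_{(1)}(a\cdot S(h_{(2)})(b))\bigr)=\delta(h_{(1)})\tau\bigl(a\,S(h_{(2)})(b)\bigr)=\tau(a\,S_\delta(h)(b))$, using only the counit and antipode axioms of $\cH$, the already-established Hopf-action property $h(xy)=h_{(1)}(x)h_{(2)}(y)$, and the second identity applied to the product $a\cdot S(h_{(2)})(b)$. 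This is valid (note that the relevant coproduct and antipode are those of $\cH=(\cR(G_2)\acl\cU(\Fg_1))^{\cop}$, i.e.\ $\Delta_{\aclsub}^{\rm op}$ and $S_{\aclsub}^{-1}$, which is exactly the structure with respect to which the Hopf-action proposition was proved) and is arguably cleaner and more robust: it makes transparent that integration by parts is a formal consequence of $\delta$-invariance plus the module-algebra axiom, whereas the paper's explicit verification has the side benefit of double-checking the generator formulas for $S_\delta$ against concrete integrals. Your hesitation in the middle of that paragraph (the aborted $h_{(2)}S_\delta(h_{(3)})$ attempt) should be cleaned up, since the correct bookkeeping applies plain $S$ inside and collects the $\delta(h_{(1)})$ factor only at the last step.
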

\begin{proof}
It suffices to verify the identities for non trivial cases. i.e., we assume $a=fU_{\psi}^{\ast}$ and $b=gU_{\psi}$ (recall from our convention \ref{algebramultiplication}, $U_{\psi}=U_{\psi^{-1}}^{\ast}$) for the first and third identities and $a=fU_{1}^{\ast}$ for the second one.\\
\begin{align}
\begin{aligned}
\tau(fU_{\psi}^{\ast} \ast gU_{\psi})&=\int_{G_1} f (g\circ \widetilde{\psi}) \ \varpi\\
&=\int_{G_1} (f\circ \widetilde{\psi^{-1}}) g \ \widetilde{\psi^{-1}}^{\ast} (\varpi)\\
&=\int_{G_1} (f\circ \widetilde{\psi^{-1}}) g \ (\det(\Gamma)(\psi^{})\circ \widetilde{\psi^{-1}}) \varpi\\
&=\int_{G_1} (f\circ \widetilde{\psi^{-1}})(\vp) g (\vp)\ \det(\Gamma)(\psi^{})(  \psi^{-1} \trt       \vp) \varpi (\vp)\\
&=\int_{G_1} (f\circ \widetilde{\psi^{-1}})(\vp) g (\vp)\ \det(\gamma)(\psi^{} \tlt (  \psi^{-1} \trt       \vp)) \varpi (\vp)\\
&=\tau(gU_{\psi} \ast \widetilde {\det (\gamma)^{}} (fU_{\psi}^{\ast})).
\end{aligned}
\end{align}
For the second identity, because of obvious multiplicativity, we verify on generators $1 \acl Z_i $:
\begin{align}
\begin{aligned}
\int_{G_1} \widetilde{Z_i}(f)\ \varpi&= \int_{G_1} \dt f( \vp \exp(tZ_i))\ \varpi(\vp) \\
&=\dt \int_{G_1}  f( \vp \exp(tZ_i))\ \varpi (\vp) \\
&=\dt \int_{G_1}  f( \vp)\ \varpi (\vp \exp(-tZ_i)) \\
&=\dt \Delta(tZ_i) \int_{G_1}  f( \vp)\ \varpi (\vp) \\
&=\delta(Z_i) \int_{G_1} f\ \varpi,
\end{aligned} 
\end{align}
and on $F\acl 1 $:
\begin{align}
\begin{aligned}
\int_{G_1} F((1 \tlt \vp )^{}) f (\vp)\ \varpi (\vp)= F(1) \int_{G_1}  f\ \varpi.
\end{aligned}
\end{align}
For the third identity, because of anti-multiplicativity of $S_{\delta}$, we verify on generators $F \acl 1 $:
\begin{align}
\begin{aligned}
\tau (\widetilde{F \acl 1}(fU_{\psi}^{\ast}) \Conv    gU_{\psi}^{} )
&=\int_{G_1} F((\psi \tlt \vp )^{}) f (\vp) g (\psi \trt \vp) \ \varpi (\vp) \\
&=\int_{G_1} F((\psi^{-1} \tlt (\psi \trt \vp ) )^{-1}) f (\vp) g (\psi \trt \vp) \ \varpi (\vp) \\
&=\int_{G_1} f (\vp) g (\psi \trt \vp)F(( \psi^{-1} \tlt (\psi \trt \vp ) )^{-1})  \ \varpi (\vp) \\
&=\int_{G_1} f (\vp) g (\psi \trt \vp) S(F)(( \psi^{-1} \tlt (\psi \trt \vp) )^{}) \ \varpi (\vp) \\
&=\tau \big(fU_{\psi}^{\ast} \Conv   \widetilde{S_{\delta}( F \acl 1)}( gU_{\psi}^{} ) \big).
\end{aligned}
\end{align}
and on $1\acl Z_i$:
\begin{align}
\begin{aligned}
 \tau ( \widetilde{ 1 \acl Z_i  } (fU_{\psi}^{\ast} \Conv  gU_{\psi}^{}))
=&\int_{G_1} \widetilde{Z_i}(f)(g\circ \widetilde{\psi})\ \varpi\\
=& \int_{G_1} \dt f( \vp \exp(tZ_i)) g (\psi \trt \vp)\ \varpi(\vp) \\
=&\dt \int_{G_1}  f( \vp) g (\psi \trt (\vp \exp(-tZ_i)))\ \varpi(\vp \exp(-tZ_i)) \\
=&\dt \int_{G_1}  f( \vp) g (\psi \trt (\vp \exp(-tZ_i)))\ \varpi(\vp ) \\
&+\dt \int_{G_1}  f( \vp) g (\psi \trt \vp )\ \varpi(\vp \exp(-tZ_i)) \\
=& \int_{G_1}  f( \vp)\ (\Gamma_{i}^j(\psi^{-1} )\circ \widetilde{\psi})(\vp)(-\widetilde{Z}_j(g)\circ \widetilde{\psi}) ( \vp )\ \varpi(\vp ) \\
&+\dt \Delta(tZ_i) \int_{G_1}  f( \vp) g (\psi \trt \vp )\ \varpi(\vp) \\
=& \int_{G_1}  f( \vp)\ \gamma_{i}^j(\psi^{-1} \tlt (\psi \trt \vp ) )(-\widetilde{Z}_j(g)\circ \widetilde{\psi}) ( \vp )\ \varpi(\vp ) \\
&+\delta(Z_i)\int_{G_1}  f( \vp) g (\psi \trt \vp )\ \varpi(\vp) \\
=& \tau ( fU_{\psi}^{\ast} \Conv  ( ( \widetilde{   1 \acl   \gamma  _{i}^j } )(\widetilde{ S(Z_j) \acl 1  })(gU_{\psi}^{})))\\
&+\delta(Z_i)\tau(fU_{\psi}^{\ast} \Conv gU_{\psi})\\
=& \tau ( fU_{\psi}^{\ast} \Conv  ( ( \widetilde{   1 \acl   \gamma  _{i}^j } )(\widetilde{ S(Z_j) \acl 1  })(gU_{\psi}^{})))\\
&+\tau(fU_{\psi}^{\ast} \Conv  \widetilde {\delta(Z_i)}(gU_{\psi}))\\
=& \tau (fU_{\psi}^{\ast} \Conv    \widetilde{      S_{\delta}( Z_i  \acl 1)}(gU_{\psi}^{})),
\end{aligned} 
\end{align}
\end{proof}
\begin{Proposition}[\cite{connes_hopf_1998,connes_cyclic_1999}]
Let $\tau: \cA \to \bC$ be a $\delta$-invariant $\sigma^{}$-trace under the Hopf action of $\cH$ on $\cA$. Then the assignment
\begin{align}
\lambda(h^1 \otimes \cdots \otimes h^n)(a^0,\dots,a^n)=\tau(a^0 h^1(a^1)\cdots h^n(a^n) )
\end{align}
defines a map of $\Lambda$-modules $\lambda^{\natural}: \cH^{\natural}_{(\delta,\sigma^{})} \to \cA^{\natural}$, which induces characteristic homomorphisms in cyclic cohomology:
\begin{align*}
\lambda^{\ast}_{\tau}:HC^{\ast}_{(\delta,\sigma^{})} (\cH) \to HC^{\ast} (\cA).
\end{align*}
\end{Proposition}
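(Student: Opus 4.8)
The plan is to verify directly that $\lambda$ intertwines the structure maps of the cocyclic module $\cH^{\natural}_{(\delta,\sigma)}$ — whose cyclicity was just recorded — with those of the standard cyclic module $\cA^{\natural}$ of the algebra $\cA$. Once $\lambda^{\natural}$ is seen to be a morphism of $\Lambda$-modules, the induced maps $\lambda^{\ast}_{\tau}$ on cyclic (and periodic) cohomology come for free from the functoriality of the $(b,B)$-bicomplex. Recall that $C^{n}(\cA)=\Hom(\cA^{\ot(n+1)},\bC)$ carries the cofaces $(\delta_{i}\varphi)(a^{0},\dots,a^{n})=\varphi(a^{0},\dots,a^{i}a^{i+1},\dots,a^{n})$ for $0\le i\le n-1$ and $(\delta_{n}\varphi)(a^{0},\dots,a^{n})=\varphi(a^{n}a^{0},a^{1},\dots,a^{n-1})$, the codegeneracies inserting $1_{\cA}$, and the cyclic operator rotating the arguments (with the customary signs).

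I would first dispose of the simplicial identities. Compatibility of $\lambda$ with $\delta_{0}$ is immediate from the fact that $1_{\cH}$ acts as the identity on $\cA$; compatibility with $\delta_{i}$ for $1\le i\le n-1$ is precisely the module-algebra rule $h(ab)=h_{(1)}(a)h_{(2)}(b)$ applied in the $i$-th and $(i+1)$-st slots; compatibility with the codegeneracies is the unitality $h(1_{\cA})=\varepsilon(h)1_{\cA}$. The only coface that touches the trace is $\delta_{n}$, which on the $\cH$-side appends the group-like $\sigma$; here one needs $\tau\big((a^{n}a^{0})h^{1}(a^{1})\cdots h^{n-1}(a^{n-1})\big)=\tau\big(a^{0}h^{1}(a^{1})\cdots h^{n-1}(a^{n-1})\,\sigma(a^{n})\big)$, which is exactly the $\sigma$-trace property $\tau(xy)=\tau\big(y\,\sigma(x)\big)$ with $x=a^{n}$ — this is the structural reason the last face of $\cH^{\natural}$ carries $\sigma$.

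The real work is the compatibility with the cyclic operator. Starting from $\tau\big(a^{n}\,h^{1}(a^{0})\,h^{2}(a^{1})\cdots h^{n}(a^{n-1})\big)$, the value of the $\cA$-cyclic operator on $\lambda(h^{1}\ot\cdots\ot h^{n})$, I would (i) apply the $\sigma$-trace to move $a^{n}$ to the end as $\sigma(a^{n})$, (ii) apply the integration-by-parts identity $\tau\big(h(a)b\big)=\tau\big(a\,S_{\delta}(h)(b)\big)$ to peel $h^{1}$ off the front, reaching $\tau\big(a^{0}\cdot S_{\delta}(h^{1})\big(h^{2}(a^{1})\cdots h^{n}(a^{n-1})\,\sigma(a^{n})\big)\big)$, and (iii) distribute $S_{\delta}(h^{1})$ across this product of $n$ factors by the iterated module-algebra rule, using $S_{\delta}(h)=\delta(h_{(1)})S(h_{(2)})$ together with the anti-comultiplicativity of $S$ to evaluate $\Delta^{n-1}\big(S_{\delta}(h^{1})\big)=\delta(h^{1}_{(1)})\,S(h^{1}_{(n+1)})\ot S(h^{1}_{(n)})\ot\cdots\ot S(h^{1}_{(2)})$. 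Reading off the resulting sum term by term should reproduce $\lambda\big(\tau_{n}(h^{1}\ot\cdots\ot h^{n})\big)(a^{0},\dots,a^{n})$ with $\tau_{n}(h^{1}\ot\cdots\ot h^{n})=\sum S(h^{1}_{(n)})h^{2}\ot\cdots\ot S(h^{1}_{(2)})h^{n}\ot S_{\delta}(h^{1}_{(1)})\sigma$ exactly. The degenerate case $n=1$, where the cofaces send $1\mapsto 1$ and $1\mapsto\sigma$ and $\tau_{1}(h)=S_{\delta}(h)\sigma$, is a quick separate check of the same type.

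I expect step (iii) — the cyclic-operator identity — to be the main obstacle: it is the one relation that simultaneously invokes all three axioms of the $\delta$-invariant $\sigma$-trace, and the accounting of the iterated coproduct of $h^{1}$, its order-reversal under the antipode, the slot occupied by the modular character $\delta$, and the consistency of signs must all be carried out carefully. Granting this, $\lambda^{\natural}$ is a morphism of $\Lambda$-modules, hence induces a morphism of the associated $(b,B)$-bicomplexes and thus the characteristic homomorphisms $\lambda^{\ast}_{\tau}\colon HC^{\ast}_{(\delta,\sigma)}(\cH)\to HC^{\ast}(\cA)$, as well as the periodic version and the analogue with $\cA_{\Gamma}=C_{c}^{\ify}(G_{1})\rtimes\Gamma$ in place of $\cA$.
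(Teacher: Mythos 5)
Your argument is correct and is precisely the standard Connes--Moscovici verification which the paper itself omits, deferring to \cite{connes_hopf_1998,connes_cyclic_1999}; the three identities you invoke ($\sigma$-trace property, $\delta$-invariance via $\tau(h(a)b)=\tau(a\,S_{\delta}(h)(b))$, and the module-algebra rule) are exactly those established in the Proposition immediately preceding this one, so nothing external is needed. Your bookkeeping in step (iii), matching $\Delta^{n-1}\big(S_{\delta}(h^1)\big)=\delta(h^1_{(1)})\,S(h^1_{(n+1)})\ot\cdots\ot S(h^1_{(2)})$ against the paper's formula $\tau_{n}(h^1\ot\cdots\ot h^n)=\sum S(h^1_{(n)})h^2\ot\cdots\ot S(h^1_{(2)})h^n\ot S_{\delta}(h^1_{(1)})\sigma$ after reassociating the coproduct indices, checks out.
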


\begin{Corollary} 
For any discrete subgroup $\Gamma < G_2^{\delta}$, $\tau: \cA_{\Gamma} \to \bC$ is also a $\delta$-invariant $\sigma^{}$-trace under the Hopf action of $\cH$ on $\cA_{\Gamma}$. The assignment above also defines a map of $\Lambda$-modules $\lambda^{\natural}: \cH^{\natural}_{(\delta,\sigma^{})} \to \cA_{\Gamma}^{\natural}$, which induces characteristic homomorphisms in cyclic cohomology:
\begin{align*}
\lambda^{\ast}_{\tau}:HC^{\ast}_{(\delta,\sigma^{})} (\cH) \to HC^{\ast} (\cA) \to HC^{\ast} (\cA_{\Gamma}).
\end{align*}

\end{Corollary}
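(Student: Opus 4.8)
The plan is to treat this corollary as a pure restriction statement: all the substantive work has already been carried out for $\cA=C_c^{\ify}(G_1)\rtimes G_2^{\delta}$, and it only remains to observe that every ingredient survives passage to the subalgebra $\cA_{\Gamma}=C_c^{\ify}(G_1)\rtimes\Gamma$. First I would record that $\cA_{\Gamma}$ is the linear span inside $\cA$ of the symbols $fU_{\psi}^{\ast}$ with $\psi\in\Gamma$, and that it is a unital subalgebra: by the product rule~\ref{algebramultiplication} the convolution $fU_{\psi_1}^{\ast}\Conv gU_{\psi_2}^{\ast}$ carries the group label $\psi_2\psi_1$, which lies in $\Gamma$ precisely because $\Gamma$ is a subgroup. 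The Hopf action formula $\widetilde{F\acl Z_{I}}(fU_{\psi}^{\ast})(\vp)=F(\psi\tlt\vp)\,\widetilde{Z_{1}}^{i_1}\cdots\widetilde{Z_{p}}^{i_p}(f)(\vp)\,U_{\psi}^{\ast}$ never alters $\psi$, so the action of $\cH$ preserves $\cA_{\Gamma}$; this is exactly the content of the Note following the Proposition that constructs the Hopf action. Hence $\cA_{\Gamma}$ is an $\cH$-module subalgebra of $\cA$.

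Second, I would check that $\tau$ from~\ref{tracetau} restricts to a $\delta$-invariant $\sigma$-trace on $\cA_{\Gamma}$. Since $1\in\Gamma$, the defining formula for $\tau$ already makes sense on $\cA_{\Gamma}$. The three identities $\tau(ab)=\tau(b\,\sigma(a))$, $\tau(h(a))=\delta(h)\tau(a)$ and $\tau(h(a)b)=\tau(a\,S_{\delta}(h)(b))$ were established over $\cA$ by reducing to the representative cases $a=fU_{\psi}^{\ast}$, $b=gU_{\psi}=gU_{\psi^{-1}}^{\ast}$ and $a=fU_{1}^{\ast}$; when $a,b\in\cA_{\Gamma}$ all these reductions remain inside $\cA_{\Gamma}$ (here one uses that $\Gamma$ is closed under inversion as well as multiplication), and the computations invoke only $\tau|_{\cA_{\Gamma}}$ and the restricted action, so they go through verbatim. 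With this in hand I would simply invoke the Connes--Moscovici proposition quoted just above, now applied to the triple $(\cH,\cA_{\Gamma},\tau)$, obtaining the map of $\Lambda$-modules $\lambda^{\natural}\colon\cH^{\natural}_{(\delta,\sigma)}\to\cA_{\Gamma}^{\natural}$ and hence the characteristic homomorphism $\lambda^{\ast}_{\tau}\colon HC^{\ast}_{(\delta,\sigma)}(\cH)\to HC^{\ast}(\cA_{\Gamma})$.

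Finally, for the asserted factorization through $HC^{\ast}(\cA)$, I would note that the inclusion $\iota\colon\cA_{\Gamma}\hookrightarrow\cA$ is a unital $\cH$-equivariant algebra homomorphism with $\tau\circ\iota=\tau|_{\cA_{\Gamma}}$; it therefore induces, by precomposition of cochains, a morphism of cocyclic modules $\iota^{\ast}\colon\cA^{\natural}\to\cA_{\Gamma}^{\natural}$ and so a restriction map $\iota^{\ast}\colon HC^{\ast}(\cA)\to HC^{\ast}(\cA_{\Gamma})$. Because the characteristic cochain is given by the same expression $\tau(a^0h^1(a^1)\cdots h^n(a^n))$ whether the $a^i$ range over $\cA$ or over $\cA_{\Gamma}$, one has $\lambda^{\natural}_{\cA_{\Gamma}}=\iota^{\ast}\circ\lambda^{\natural}_{\cA}$ already at the level of $\Lambda$-modules, and passing to cohomology yields the composite $HC^{\ast}_{(\delta,\sigma)}(\cH)\to HC^{\ast}(\cA)\to HC^{\ast}(\cA_{\Gamma})$. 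I do not expect any genuine difficulty here: the whole argument is formal, and the only point that deserves a moment's care is to make explicit that it is the \emph{group} structure of $\Gamma$ --- closure under products and under inversion --- that simultaneously underlies the subalgebra property and the validity of the trace computations on $\cA_{\Gamma}$.
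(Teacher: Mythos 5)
Your argument is correct and is exactly the restriction argument the paper intends (the paper states this Corollary without proof, relying on the preceding Note that the Hopf action preserves $\cA_{\Gamma}$ and on the Connes--Moscovici proposition applied to $(\cH,\cA_{\Gamma},\tau)$). The only imprecision is calling $\cA_{\Gamma}$ a \emph{unital} subalgebra --- $C_c^{\ify}(G_1)$ has only local units, as the paper notes later --- but this does not affect the construction of the characteristic map or the factorization through $HC^{\ast}(\cA)$.
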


\begin{Theorem}\label{injectivehomomorphism}
Let ($G_1$, $G_2$) be a matched pair of Lie groups, $\cA$ be the convolution algebra $C_c^{\ify}(G_1)\rtimes G_2^{\delta}$. $\tau: \cA \to \bC$ be a $\delta$-invariant $\sigma$-trace, as defined in this section, under the Hopf action of $\cH=(\cU(\Fg_1)   \acr  \cR(G_2))^{\cop}$ on $\cA$. Then the assignment
\begin{align}
\lambda(h^1 \otimes \cdots \otimes h^n)(a^0,\dots,a^n)=\tau(a^0 h^1(a^1)\cdots h^n(a^n) ) ,\qquad h^i \in \cH, \quad a^i \in \cA,
\end{align}
defines an injective homomorphism of $\Lambda$-modules $\lambda^{\natural}: \cH^{\natural}_{(\delta,\sigma^{})} \to \cA^{\natural}$.
\end{Theorem}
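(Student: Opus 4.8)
The plan is as follows. Since the preceding Proposition already shows that $\lambda^{\natural}$ is a morphism of $\Lambda$-modules, only injectivity requires argument, and because a morphism of modules over the cyclic category $\Lambda$ is injective exactly when it is injective in every degree, it suffices to prove that $\lambda\colon\cH^{\ot n}\to\Hom(\cA^{\ot(n+1)},\bC)$ is injective for each $n\ge 0$; the degree $n=0$ case is just the statement $\tau\ne 0$, which is clear. So fix $n\ge 1$. Using the vector-space identification $\cH\cong\cR(G_2)\ot\cU(\Fg_1)$ and the PBW basis $\{Z_{I}\}$ of $\cU(\Fg_1)$, I would write a general $\omega\in\cH^{\ot n}$ uniquely as a finite sum $\omega=\sum_{\vec I}R_{\vec I}\ot\bigl(Z_{I_1}\ot\cdots\ot Z_{I_n}\bigr)$ over distinct tuples $\vec I=(I_1,\dots,I_n)$ of PBW multi-indices, with coefficients $R_{\vec I}\in\cR(G_2)^{\ot n}$, and reduce the problem to showing that each $R_{\vec I}=0$. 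Since $\cR(G_2)$ is by construction a space of functions on $G_2$ and finitely many linearly independent functions are separated by finitely many point evaluations, $\cR(G_2)^{\ot n}$ embeds into the space of functions on $G_2^{n}$; hence it is equivalent to show that every $R_{\vec I}$ vanishes identically on $G_2^{n}$.

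The key step is to feed $\lambda(\omega)$ well-chosen elements of $\cA$. Given $\psi_1,\dots,\psi_n\in G_2$ I would put $\psi_0:=(\psi_n\psi_{n-1}\cdots\psi_1)^{-1}$, so that $\psi_n\cdots\psi_1\psi_0=e$, and take $a^i:=f_iU^{\ast}_{\psi_i}$ with $f_0,\dots,f_n\in C_c^{\ify}(G_1)$ arbitrary. For a single summand $h^i=F^i\acl Z_{I_i}$, the action formula of the previous Proposition gives $h^i(a^i)=g^iU^{\ast}_{\psi_i}$ with $g^i(\vp)=F^i(\psi_i\tlt\vp)\,\widetilde{Z_{I_i}}(f_i)(\vp)$, and iterating the crossed-product multiplication~\ref{algebramultiplication} yields
\[
a^0\Conv h^1(a^1)\Conv\cdots\Conv h^n(a^n)=f_0\cdot\prod_{i=1}^{n}\bigl(g^i\circ\widetilde{\rho_i}\bigr)\cdot U^{\ast}_{e},\qquad \rho_i:=\psi_{i-1}\cdots\psi_1\psi_0=(\psi_n\cdots\psi_i)^{-1}.
\]
Applying $\tau$ (which integrates the coefficient of $U^{\ast}_{e}$ against $\varpi$) and summing over the expansion of $\omega$, this produces
\[
\lambda(\omega)(a^0,\dots,a^n)=\int_{G_1}f_0(\vp)\Bigl[\,\sum_{\vec I}\bigl\langle R_{\vec I};\psi_1\tlt\eta_1(\vp),\dots,\psi_n\tlt\eta_n(\vp)\bigr\rangle\prod_{i=1}^{n}\widetilde{Z_{I_i}}(f_i)\bigl(\eta_i(\vp)\bigr)\Bigr]\varpi(\vp),
\]
with $\eta_i(\vp):=\rho_i\trt\vp$ and $\langle R;-\rangle$ the evaluation of $R\in\cR(G_2)^{\ot n}$ as a function on $G_2^{n}$.

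Now suppose $\lambda(\omega)=0$. The bracket is a smooth, compactly supported function of $\vp$ and $\varpi$ is nowhere vanishing, so letting $f_0$ range over $C_c^{\ify}(G_1)$ forces the bracket to vanish identically in $\vp$; I would then specialize to $\vp=e$. The matched-pair identities $g\trt e=e$ and $\psi\tlt e=\psi$ give $\eta_i(e)=e$ and $\psi_i\tlt\eta_i(e)=\psi_i$, so all the twists collapse and
\[
\sum_{\vec I}\bigl\langle R_{\vec I};\psi_1,\dots,\psi_n\bigr\rangle\prod_{i=1}^{n}\widetilde{Z_{I_i}}(f_i)(e)=0
\]
for every $(\psi_1,\dots,\psi_n)\in G_2^{n}$ and all $f_1,\dots,f_n\in C_c^{\ify}(G_1)$. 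In the coordinates $\exp(t_1Z_1)\cdots\exp(t_pZ_p)$ at $e$, the number $\widetilde{Z_{I}}(f)(e)$ is, up to a nonzero constant, the corresponding Taylor coefficient of $f$; hence for each fixed $i$ the functionals $f\mapsto\widetilde{Z_{I_i}}(f)(e)$ over the finitely many multi-indices occurring are linearly independent, and, the $f_i$ being independent variables, the products $\prod_i\widetilde{Z_{I_i}}(f_i)(e)$ over distinct $\vec I$ are linearly independent functionals of $(f_1,\dots,f_n)$. Therefore $\langle R_{\vec I};\psi_1,\dots,\psi_n\rangle=0$ for every $\vec I$ and every $(\psi_1,\dots,\psi_n)$, i.e.\ each $R_{\vec I}$ vanishes on $G_2^{n}$; hence $R_{\vec I}=0$ and $\omega=0$.

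The one part that is not a formality is the explicit evaluation of $\tau(a^0h^1(a^1)\cdots h^n(a^n))$ displayed above: it requires careful bookkeeping of the iterated convolutions and of the twist $\psi\tlt\vp$ coming from the $\cR(G_2)$-part of the action, and a check that the choice $\psi_0=(\psi_n\cdots\psi_1)^{-1}$ is exactly what makes $\tau$ see the term. After that the argument is soft: the specialization $\vp=e$ kills every matched-pair twist simultaneously, and what remains is the linear independence of the jet functionals $\widetilde{Z_{I_i}}(\cdot)(e)$ together with the embedding $\cR(G_2)^{\ot n}\hookrightarrow\{\text{functions on }G_2^{n}\}$. (If $G_1=\{e\}$ the $\Fg_1$-part is absent and the argument simplifies accordingly, and the same reasoning applies verbatim to $\cA_{\Gamma}$ for a discrete subgroup $\Gamma<G_2$ by restricting the $\psi_i$ to $\Gamma$.)
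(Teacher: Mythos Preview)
Your proof is correct and follows essentially the same route as the paper. Both arguments use faithfulness of $\tau$ (varying $a^0$) to strip off the integral, then evaluate the resulting convolution product at $e\in G_1$ so that the matched-pair twists $\psi_i\tlt\vp$ collapse to $\psi_i$, and finish by combining the linear independence of the PBW jet functionals $f\mapsto\widetilde{Z_I}(f)(e)$ with separation of points in $G_2$ by representative functions. The only organizational difference is that the paper packages the last step as a nested induction (on $n$, and within that on PBW degree of the last tensor slot), whereas you handle all $n$ slots simultaneously by observing that tensor products of linearly independent functionals remain linearly independent; this is a cosmetic streamlining rather than a different idea.
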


Before we go to the proof we list two useful lemmas.
\begin{Lemma}
\label{faithfultrace}
the $\sigma^{}-$trace $\tau$ is faithful, i.e., that 
\begin{align*}
\tau (ab)=0 ,\quad \forall a \in \cA, \quad \text{implies}\ b=0.
\end{align*}

\end{Lemma}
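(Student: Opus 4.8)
\emph{Proof plan for Lemma~\textup{\ref{faithfultrace}}.} The plan is to establish faithfulness by a direct separation argument, exploiting that the only homogeneous component of a product surviving under $\tau$ is the one supported at the group identity. First I would write an arbitrary element $b \in \cA$ as a finite sum
\[
b = \sum_{k} g_k U_{\psi_k}^{\ast}, \qquad g_k \in C_c^{\ify}(G_1),
\]
with the $\psi_k \in G_2$ pairwise distinct. The goal is to show that the hypothesis $\tau(ab)=0$ for every $a \in \cA$ forces each $g_k$ to vanish.

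Next, for a fixed index $k$ I would feed in the test element $a = f U_{\psi_k^{-1}}^{\ast}$ with $f \in C_c^{\ify}(G_1)$ arbitrary. Using the multiplication rule~\ref{algebramultiplication},
\[
a \Conv \big( g_k U_{\psi_k}^{\ast}\big) = f\,\big(g_k \circ \widetilde{\psi_k^{-1}}\big)\, U_{\psi_k \psi_k^{-1}}^{\ast} = f\,\big(g_k \circ \widetilde{\psi_k^{-1}}\big)\, U_{1}^{\ast},
\]
while each remaining product $a \Conv \big(g_j U_{\psi_j}^{\ast}\big)$ lands in the component indexed by $\psi_j \psi_k^{-1} \neq 1$ for $j \neq k$. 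Since $\tau$ annihilates every component whose group part differs from the identity (definition~\ref{tracetau}), only the $k$-th summand contributes, and the hypothesis yields
\[
0 = \tau(ab) = \int_{G_1} f\,\big(g_k \circ \widetilde{\psi_k^{-1}}\big)\, \varpi \qquad \text{for all } f \in C_c^{\ify}(G_1).
\]

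Finally, because $\varpi$ is a nowhere-vanishing top-degree form on $G_1$, the pairing $(f,h) \mapsto \int_{G_1} f\, h\, \varpi$ is non-degenerate, so the displayed identity forces $g_k \circ \widetilde{\psi_k^{-1}} \equiv 0$. As $\widetilde{\psi_k^{-1}}$ is a diffeomorphism of $G_1$ (it is the action of the group element $\psi_k^{-1}$), precomposition with it is a bijection of $C_c^{\ify}(G_1)$, whence $g_k \equiv 0$. Running this over all $k$ gives $b=0$.

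The argument is essentially routine; the only points requiring care are the bookkeeping of the group index $\psi_j \psi_k^{-1}$, which confirms that a single test element isolates exactly one summand of $b$, and the non-degeneracy of the integration pairing, which is precisely where the nowhere-vanishing of $\varpi$ enters. I do not expect a serious obstacle, but one should verify explicitly that the diffeomorphism $\widetilde{\psi_k^{-1}}$ does not spoil non-degeneracy — it does not, since precomposition by a diffeomorphism is invertible on $C_c^{\ify}(G_1)$.
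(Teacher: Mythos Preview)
Your proposal is correct and follows essentially the same approach as the paper: isolate a single $\psi_k$-component by choosing a test element $a$ supported at $\psi_k^{-1}$, then invoke non-degeneracy of the integration pairing $(f,h)\mapsto\int_{G_1} fh\,\varpi$. The paper's own proof is terser about the reduction step but spells out the non-degeneracy via an explicit bump-function argument, whereas you state the reduction carefully and take non-degeneracy as a standard fact; the content is the same.
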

\begin{proof}
We just need to show that if $\int_{G_1} f (g\circ \widetilde{\psi}) \ \varpi=0$ for all $g$, then $f=0$, which is the same that if $\int_{G_1} f g \ \varpi=0$ for all $g$, then $f=0$. 

Now if we assume $\int_{G_1} f g \ \varpi=0$ for all $g$, but $f(p)\ne 0$ at some $p \in G_1$. Then there exits some small neighborhood $U$ of $p$ such that $f$ is strictly positive or negative, say $f > c >0$ on $U$. We can have a compact set $V_1\subset U$ and a smaller compact set $V_2\subsetneqq V_1 $. Take $g $ to be a smooth cutoff function that is identity on $V_2$ and vanish off $V_1$, then $\int_{G_1} f g \ \varpi  =\int_{V_1} f g \ \varpi > \int_{V_2} f g \ \varpi > \int_{V_2} c  \ \varpi >0 $, hence a contradiction.     
\end{proof}

\begin{Lemma}\label{n=1}
\begin{align*}
h(a)(e)=0 ,\quad  \forall  a \in \cA  , \quad \text{implies}\ h=0.
\end{align*}
\end{Lemma}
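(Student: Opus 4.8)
The plan is to pass to the PBW normal form of $\cH$ and combine it with the explicit formula for the standard action. First I would write $h$ in its unique normal form
\[
h=\sum_{I} F_{I}\acl Z_{I},\qquad I=(i_1,\dots,i_p),
\]
a \emph{finite} sum over ordered multi-indices, with $F_{I}\in\cR(G_2)$ and $Z_{I}=Z_{1}^{i_1}\cdots Z_{p}^{i_p}$ the chosen PBW monomials of $\cU(\Fg_1)$ (cf.~\ref{basis2}). Proving $h=0$ is then the same as proving that every $F_{I}$ vanishes identically on $G_2$, since representative functions are honest functions on $G_2$.

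Next I would test the hypothesis against the elements $a=fU_{\psi}^{\ast}$ for arbitrary $f\in C_c^{\ify}(G_1)$ and arbitrary $\psi\in G_2$. By the action formula $\widetilde{F\acl Z_{I}}(fU_{\psi}^{\ast})(\vp)=F(\psi\tlt\vp)\,\widetilde{Z_{1}}^{i_1}\cdots\widetilde{Z_{p}}^{i_p}(f)(\vp)\,U_{\psi}^{\ast}$ and the fact that the identity of $G_1$ acts trivially, so that $\psi\tlt e=\psi$, evaluation at $e\in G_1$ gives
\[
h(a)(e)=\Big(\sum_{I} F_{I}(\psi)\,\big(\widetilde{Z_{I}}f\big)(e)\Big)\,U_{\psi}^{\ast}.
\]
As $U_{\psi}^{\ast}$ is a nonzero generator of the group algebra, the hypothesis $h(a)(e)=0$ forces $\sum_{I} F_{I}(\psi)\,(\widetilde{Z_{I}}f)(e)=0$ for every $f$ and every $\psi$.

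Finally I would fix $\psi$ and appeal to linear independence: the linear functionals $f\mapsto(\widetilde{Z_{I}}f)(e)$ on $C_c^{\ify}(G_1)$, indexed by the ordered multi-indices $I$, are linearly independent. Indeed, a left-invariant differential operator $D$ on $G_1$ is recovered from the distribution $f\mapsto(Df)(e)$ by left translation, so $u\mapsto\big(f\mapsto(\widetilde{u}f)(e)\big)$ has the same kernel as the map $u\mapsto\widetilde{u}$ from $\cU(\Fg_1)$ to differential operators, and the latter is injective by PBW. Hence $F_{I}(\psi)=0$ for every $I$; since $\psi\in G_2$ was arbitrary, $F_{I}\equiv 0$ for every $I$, i.e.\ $h=0$.

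Once the action formula is in place the argument is essentially bookkeeping; the only non-mechanical ingredient is the final linear-independence statement, which is classical, so I do not expect a genuine obstacle here.
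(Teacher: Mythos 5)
Your proposal is correct and follows essentially the same route as the paper: reduce to the PBW normal form $h=\sum_I F_I\acl Z_I$, evaluate the action on $fU_\psi^\ast$ at $e$ to get $\sum_I F_I(\psi)(\widetilde{Z_I}f)(e)=0$ for all $f$ and $\psi$, and conclude each $F_I(\psi)=0$ from the linear independence of the functionals $f\mapsto(\widetilde{Z_I}f)(e)$. The only cosmetic difference is that the paper justifies this last independence by explicitly constructing, for the top-order multi-index, a bump function $g_n$ with $\widetilde{Z_{I_n}}(g_n)(e)=1$ and $\widetilde{Z_{I_s}}(g_n)(e)=0$ for lower $s$, whereas you cite the classical injectivity of $\cU(\Fg_1)$ into left-invariant differential operators; both are valid.
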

\begin{proof}
Recall~\ref{PBW} and~\ref{basis2}, since the $Z_I$ ’s form a PBW basis of $\cU(\Fg_1)$, and that every element $\sum_{j}F_{j} \acl u_{j}$ in $\cR(G_2) \acl \cU(\Fg_1)$ can be written uniquely as
\begin{align}
\begin{aligned}
\sum_{j}F_{j} \acl u_{j}=\sum_{I} F_{I} \acl Z_{I},\quad I=(i_1,\dots, i_p),
\end{aligned}
\end{align}
we just need to prove that if
\begin{align*}
\sum \widetilde{F_{I}} \widetilde{Z_{I}} (fU_{\psi}^{\ast})(e)=0, \qquad \forall fU_{\psi}^{\ast} \in \cA,
\end{align*}
then $F_{I}=0$ for any $I$.

However, for every $\psi \in G_2$ we will have   
\begin{align*}
\sum F_{I}(\psi \tlt e) \widetilde{Z_{I}} (f)(e)=0, \qquad \forall f \in C_c^{\ify}(G_1),
\end{align*}
Because $Z_{I}$ form a PBW basis of $\cU(\Fg_1)$, we order the sum by the order of the PBW basis assume that the coefficient $F_{I_n}(\psi )$ is non-zero for $I_n$ with top degree. There exits a function $g_n \in C_c^{\ify}(G_1)$ defined on a neighborhood of $e$ such that $\widetilde{Z_{I_n}} (g_n)(e)=1$ and $\widetilde{Z_{I_s}} (g_n)(e)=0$ for all lower $s < n$. Therefore we have $F_{I_n}(\psi)=0$, a contradiction. Therefore $F_{I}(\psi)=0$ for any $I$. Since this is true for every $\psi \in G_2$, we then have our desired conclusion. 
\end{proof}

\begin{proof}
Now we are ready to prove theorem~\ref{injectivehomomorphism}.

Assume $\tau \big( \sum_{j} a^0 h_j^1(a^1)\cdots h_j^n(a^n) \big)=0$ for any $a^0,,\dots,a^n$, by lemma~\ref{faithfultrace}, we conclude
\begin{align}
\sum_{j} h_j^1(a^1)\cdots h_j^n(a^n)=0,
\end{align}
when evaluating at the identity, the above equation becomes 
\begin{align}
\sum_{j} h_j^1(a^1)\cdots h_j^n(a^n)(e)=0,
\end{align}
from where we want to show 
\begin{align*}
\sum_{j} h_j^1\otimes \cdots \otimes h_j^n=0.
\end{align*}
We use induction: When $n=1$, lemma~\ref{n=1} applies. Now assume the statement is true of $n-1$, i.e., if  
\begin{align*}
\sum_{j} h_j^1(a^1)\cdots h_j^{n-1}(a^{n-1})(e)=0,
\end{align*}
we have 
\begin{align}
\sum_{j} h_j^1\otimes \cdots \otimes h_j^{n-1}=0.
\end{align}

Denote $a^k=f_kU_{\psi_k}^{\ast}$. If we use~\ref{basis2} and the PBW basis of $\cU(\Fg_1)$ and write each $h_j^k$ as $F_{j,I(j,k)}^k \acl Z_{j,I(j,k)}^k $. Since $e$ is fixed by every $\psi \in G_2$, when evaluating at the identity the above equality becomes 
\begin{equation*}
\sum_{j,I}  F_{j,I(j,1)}^1 ((\psi_1 \tlt e)^{}) \widetilde{Z_{j,I(j,1)}^1} (f_1)(e) \cdots F_{j,I(j,n)}^n((\psi_n \tlt e)^{}) \widetilde{Z_{j,I(j,n)}^n} (f_n)(e)=0.
\end{equation*}
We partition the sum by the order of $Z_{j,I(j,n)}^n$ and index with $(1\le s \le N,Q(s))$ and use induction again on $s$. after reordering the above equality becomes
\begin{equation*}
\sum_{I_1 \le  I_s \le  I_N} F_{I_s}^n(\psi_n^{} ) \widetilde{Z_{I_s}^n} (f_n)(e)( \sum_{Q(s)} F_{Q}^1 (\psi_1^{} ) \widetilde{Z_{Q}^1} (f_1)(e) \cdots F_{Q}^{n-1}(\psi_{n-1} ^{}) \widetilde{Z_{Q}^{n-1}} (f_{n-1})(e))=0.
\end{equation*}
Now if we look at the top ${Z_{I_N}^n}$ with nonzero $F_{I_N}$, because of the PBW basis, there exits a function $g_n \in C_c^{\ify}(G_1)$ defined on a neighborhood of $e$ such that $\widetilde{Z_{I_N}^n} (g_n)(e)=1$ and $\widetilde{Z_{I_s}^n} (g_n)(e)=0$ for all lower $s < N$. Since we assume $F_{I_N}$ is nonzero, there also exist some $\psi_n^{+}$ such that $F_{I_N}(\psi_n^{+}) \ne 0$.

Now if we fix $a^n=g_nU_{\psi_n^{+}}^{\ast}$, the above equality becomes 
\begin{equation*}
 \sum_{Q(N)} F_{Q}^1 (\psi_1 ^{}) \widetilde{Z_{Q}^1} (f_1)(e) \cdots F_{Q}^{n-1}(\psi_{n-1} ^{}) \widetilde{Z_{Q}^{n-1}} (f_{n-1})(e)=0 \quad \forall f_kU_{\psi_k}^{\ast}, \quad1\le k \le n-1,
\end{equation*}
this is the induction hypothesis for $n-1$ of the first induction, hence we have 
\begin{align}
\sum_{Q(N)} h_Q^1\otimes \cdots \otimes h_Q^{n-1}=0.
\end{align} 
Therefore
\begin{align*}
(\sum_{Q(N)} h_Q^1\otimes \cdots \otimes h_Q^{n-1}) \otimes h_N^n=0.
\end{align*} 
This means the top degree part, with the new order $(1\le s \le N,Q(s))$, of the original sum
\begin{align*}
\sum_{j} h_j^1\otimes \cdots \otimes h_j^n
\end{align*}
is $0$. By the reversed induction on the degree, the whole term is $0$ and $\lambda$ is injective.
\end{proof}

\begin{Definition}\label{representativecochain}
An n-cochain $\vp $ on the algebra $C_c^{\ify}(G_1)\rtimes G_2^{\delta}$ is representative iff it is in the range of the above monomorphism $\lambda$. We denote the representative cochain space as $\text{Im}(\lambda^{\natural})$.
\end{Definition}

\begin{Corollary} \label{representativecochain2}
Under the assumption of the above theorem, $\lambda$ is an isomorphism of $\Lambda$-modules $\lambda^{\natural}: \cH^{\natural}_{(\delta,\sigma^{})} \to \text{Im}(\lambda^{\natural})$.

\end{Corollary}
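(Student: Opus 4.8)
The statement is an essentially immediate consequence of Theorem~\ref{injectivehomomorphism} together with Definition~\ref{representativecochain}, and the plan is simply to spell out why an injective morphism of $\Lambda$-modules is an isomorphism onto its image.

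First I would observe that, being a morphism of $\Lambda$-modules, $\lambda^{\natural}$ intertwines all the cyclic structure maps --- the faces $\delta_i$, the degeneracies $\sigma_i$, and the cyclic operators $\tau_n$ --- of $\cH^{\natural}_{(\delta,\sigma^{})}$ with those of $\cA^{\natural}$. Consequently the graded subspace $\mathrm{Im}(\lambda^{\natural})\subset\cA^{\natural}$, whose degree-$n$ component is $\lambda\big(\cH^{\otimes n}\big)$, is stable under every structure map of $\cA^{\natural}$: indeed, if $x=\lambda^{\natural}(y)$ then $\delta_i x=\lambda^{\natural}(\delta_i y)\in\mathrm{Im}(\lambda^{\natural})$, and similarly for the $\sigma_i$ and $\tau_n$. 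Hence $\mathrm{Im}(\lambda^{\natural})$ is genuinely a $\Lambda$-submodule of $\cA^{\natural}$, which is what makes Definition~\ref{representativecochain} well posed.

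Next I would note that $\lambda^{\natural}\colon\cH^{\natural}_{(\delta,\sigma^{})}\to\mathrm{Im}(\lambda^{\natural})$ is surjective by definition of the target and injective by Theorem~\ref{injectivehomomorphism}; thus in each simplicial degree the component $\lambda\colon\cH^{\otimes n}\to\mathrm{Im}(\lambda^{\natural})_n$ is a linear bijection, so its set-theoretic inverse is again linear in each degree. Finally, applying $(\lambda^{\natural})^{-1}$ to both sides of each identity $\lambda^{\natural}\circ f = f'\circ\lambda^{\natural}$ ($f$ a structure map of $\cH^{\natural}_{(\delta,\sigma^{})}$, $f'$ the corresponding one of $\mathrm{Im}(\lambda^{\natural})$) shows that $(\lambda^{\natural})^{-1}$ intertwines the $\Lambda$-structure as well, so it too is a morphism of $\Lambda$-modules. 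Therefore $\lambda^{\natural}$ is an isomorphism of $\Lambda$-modules onto $\mathrm{Im}(\lambda^{\natural})$, as claimed; the induced isomorphisms on cyclic and periodic cyclic cohomology then follow by functoriality of these invariants on the category of $\Lambda$-modules. There is no real obstacle here --- the only point deserving a sentence of justification is that the image of a morphism of $\Lambda$-modules is again a $\Lambda$-module, which is formal.
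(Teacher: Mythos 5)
Your proposal is correct and matches the paper's intent exactly: the paper gives no proof for this corollary, treating it as an immediate consequence of the injectivity established in Theorem~\ref{injectivehomomorphism} together with the definition of $\text{Im}(\lambda^{\natural})$, which is precisely the formal argument you spell out. Your only added content --- checking that the image is a $\Lambda$-submodule and that the degreewise inverse intertwines the structure maps --- is the routine verification the paper leaves implicit, and it is carried out correctly.
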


From now on we do not distinct between $\cH^{\natural}_{(\delta,\sigma^{})}$ and $\text{Im}(\lambda^{\natural})$.

\section{Differentiable and representative cohomologies of action groupoids}
\label{sec:groupoid}

In this section, we will first review Haefliger's differentiable cohomology of a action groupoid~\cite{haefliger_differential_1972}, in particular we take our $G_1 \rtimes G_2^{\delta}$ as example. Then we refine the complex that calculate the differentiable cohomology and define representative cohomology of a action groupoid. The differentiable and representative cohomology works in general for any action groupoid. However, for our example $G_1 \rtimes G_2^{\delta}$ with a matched pair of Lie groups $(G_1,G_2)$, There is an even smaller subcomplex which we called $D^{p,q}$ that satisfies some strong covariance property. We will explain in detail the description of $D^{p,q}$ and its relation (as appeared in the last column of diagram~\ref{diagram2}) between the representative cochain space $\text{Im}(\lambda^{\natural})$ from previous section via Conne's $\Phi$ map.

\subsection{Decomposition of Lie groups}
Recall from~\cite{hochschild_representations_1957} that a {\it nucleus} of a Lie group $G$ is a simply connected solvable closed normal subgroup $L$ of $G$ such that $G/L$ is reductive, in the sense that $G/L$ has a faithful representation and every finite dimensional analytic representation of $G/L$ is semisimple. In this case one proves (\cite[Thm.~9.1]{hochschild_representations_1957}) that $G = H \ltimes L$, where $H\cong G/L$ is the complementary closed subgroup which is reductive. We will assume all groups are connected through this paper. $H$ acts on $L$ from the right by 
\begin{align}
\begin{aligned}
l \tlt h= h^{-1} l h
\end{aligned}
\end{align}
which coincide with the right multiplication of $H$ on $H \backslash G$
\begin{align}
\begin{aligned}
(Hl )h=H(h h^{-1} l h)=H (l \tlt h)
\end{aligned}
\end{align}

Let, in addition, $\Fh,\Fl  \subset \Fg$ be Lie algebras of $H,L$ and $G$ respectively.

Parallel decomposition of Linear algebraic group is called the Levi decomposition (\cite{hochschild_basic_1981} ) where $G = H \ltimes L $, $H$ the levi factor and $L=G_{u}$ the unipotent radical.\\

For a matched pair of Lie groups $G_1$ and $G_2$ we can form the bicrossed product group $G_1\bowtie G_2$ with group structure given by \label{assemble}
\begin{align*}
(\vp_1\bowtie\psi_1)(\vp_2\bowtie\psi_2)=(\vp_1 \psi_1 \trt \vp_2)\bowtie(\psi_1 \tlt \vp_2 \psi_2).
\end{align*}

For a matched pair of Lie algebras $\Fg_1$ and $\Fg_2$ we can also form the bicrossed product Lie algebra $\Fg_1\bowtie \Fg_2$ with underlying vector space $\Fg_1\oplus \Fg_2$ and bracket relation:
\begin{equation*}
[X_1 \oplus Y_1, X_2 \oplus Y_2] = ([X_1,x_2]+ Y_1 \trt X_2 -Y_2 \trt X_1) \oplus ( [Y_1,Y_2]+Y_1 \tlt X_2 -Y_2 \tlt X_1 ).
\end{equation*}

Now let $G_2 = H_2 \ltimes L_2$ be such decomposition of $G_2$. In general, the simply-connected nucleus $L_2$ of a Lie group is differ from nilpotent by a vector group, to simplify the situation we assume further $L_2$ is nilpotent with Lie algebra $\Fl_2$. In the algebraic case $L_2$ as the unipotent radical is nilpotent. Under our assumption, for both case we have $L_2=\exp (\Fl_2)$ and $\Fl_2=\mathfrak{nil}(\Fg_2)$.  \\

\subsection{Differentiable cohomology of action groupoid}

Let $\cG=G_1 \rtimes G_2^{\delta}$ be our action groupoid, $\Omega_{G_1}^{\ast}$ be the complex of differential forms on $G_1$. We introduce the following double complex $\label{bigcomplex}C^{p,q},d_1,d_2$. $C^{p,q}= \{ 0 \} $ unless $p \ge 0$ and $0 \le q \le \text{ dim }G_1 $, $C^{p,q}$ be the space of totally antisymmetric maps $\alpha:G_2^{p+1}\to \Omega^{q}(G_1)$ that satisfy the $G_2$-equivariant condition
\begin{align}
\alpha(\psi_0  \psi,\dots,\psi_p \psi )=(\psi^{-1}\trt)^{\ast} \alpha(\psi_0,\dots,\psi_p), \quad \forall \psi,\psi_i \in G_2.
\end{align}
The coboundary $d_1:C^{p,q} \to C^{p+1,q}$ is given by
\begin{align}
(d_1\alpha)(\psi_0,\dots,\psi_{p+1})=\sum_{j=0}^{p+1}(-1)^{q+j}\alpha(\psi_0,\dots,\check{\psi_{j}} , \dots,\psi_{p+1}),
\end{align}
The coboundary $d_2:C^{p,q} \to C^{n,q+1}$ is given by de Rham boundary:
\begin{align}
(d_2\alpha)(\psi_0,\dots,\psi_{p+1})=d \alpha(\psi_0,\dots,\psi_{p+1}).
\end{align}

The complex $C^{p,q} $ has a subcomplex $\label{diffcomplex}C_{d}^{p,q} $, which consists of forms $\alpha(\psi_0 ,\dots,\psi_p )$ that depend on $\psi_i$ {\it smoothly}, i.e., we can write $\alpha \in C_{d}^{p,q}$ as 

\begin{align}
\alpha(\psi_0,\dots,\psi_p)=\sum c_j \rho_j  ,
\end{align}
here $\rho_j$ form a basis of left $G_1$ invariant forms on $G_1$, and $c_j$ are smooth functions on $G_2$ .

\begin{Definition}\cite{haefliger_differential_1972}
Let $\cG=G_1 \rtimes G_2^{\delta}$. The cohomology of the action groupoid $H^{\bullet} ( \cG;\Rb)$ is the cohomology of the simple complex associated to the above double complex $C^{p,q}$ and the differentiable cohomology $H^{\bullet}_{d}( \cG;\Rb)$ is the cohomology of the simple complex associated to the subcomplex $C_{d}^{p,q}=C^{p}_{d}( G_2^{\delta};\Omega_{G_1}^{q})$ whose elements are forms depending smoothly on $p$ elements of $G_2$ as in~\ref{diffcomplex}.
\end{Definition}

\begin{Proposition}\cite[Thm4.2]{haefliger_differential_1972}
Let $\cG=G_1 \rtimes G_2^{\delta}$. $H^{\bullet}_{d}( \cG;\Rb)$ is isomorphic to the cohomology of $G_2$-invariant forms on $G_2/K_2 \times G_1$, where $K_2$ is a maximal compact subgroup of $G_2$, and $G_2$ acts diagonally.
\end{Proposition}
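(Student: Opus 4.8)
The plan is to read the bicomplex $\{C_d^{p,q},d_1,d_2\}$ column by column in the de~Rham degree $q$ along $G_1$ and to apply a van~Est--Dupont comparison in the group variables. For each fixed $q$ with $0\le q\le\dim G_1$, the column $(C_d^{\bullet,q},d_1)$ is precisely the complex of \emph{smooth} homogeneous group cochains of $G_2$ with values in the Fr\'echet $G_2$-module $\Omega^{q}(G_1)$, where $\psi\in G_2$ acts by $\omega\mapsto(\psi^{-1}\trt)^{*}\omega$: the $G_2$-equivariance condition is exactly the homogeneity condition, and $d_1$ is the usual differentiable-cohomology differential. Hence $H^{p}(C_d^{\bullet,q},d_1)=H^{p}_{d}\bigl(G_2;\Omega^{q}(G_1)\bigr)$, and $H^{\bullet}_{d}(\cG;\Rb)$ is by definition the total cohomology of this bicomplex, which lives in the strip $0\le q\le\dim G_1$, $p\ge 0$, so the spectral sequences of both filtrations converge and I may use them freely.

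On the geometric side, let $B^{p,q}$ denote the bidegree-$(p,q)$ component of $\Omega^{\bullet}(G_2/K_2\times G_1)^{G_2}$ for the diagonal $G_2$-action, i.e. the space of $G_2$-invariant $\Omega^{q}(G_1)$-valued $p$-forms on $G_2/K_2$; the total cohomology of $\{B^{p,q}\}$ is the cohomology of $G_2$-invariant forms on $G_2/K_2\times G_1$. The key input is van~Est's theorem in its relative-homological form: for any smooth $G_2$-module $V$ there is a natural isomorphism $H^{p}_{d}(G_2;V)\cong H^{p}\bigl(\bigl(\Omega^{\bullet}(G_2/K_2)\,\widehat{\otimes}\,V\bigr)^{G_2}\bigr)\cong H^{p}(\Fg_2,\Fk_2;V)$, the first iso implemented by Dupont's integration over affine simplices of $G_2/K_2$ (\cite{van_est_group_1953,dupont_simplicial_1976}). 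Taking $V=\Omega^{q}(G_1)$ and using naturality in $V$, these maps assemble into a morphism of bicomplexes $\Phi\colon B^{\bullet,\bullet}\to C_d^{\bullet,\bullet}$: it commutes with $d_1$ and with the de~Rham differential on $G_2/K_2$ by van~Est, and it commutes with the de~Rham differential $d_2$ along $G_1$ because $\Phi$ only touches the $G_2/K_2$ variables. By the displayed isomorphism $\Phi$ is a quasi-isomorphism on each column; since the $q$-filtration is finite, the comparison theorem for spectral sequences of bicomplexes shows $\Phi$ induces an isomorphism on total cohomology, which is the assertion.

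The main obstacle is the van~Est step with the \emph{infinite-dimensional} coefficient $V=\Omega^{q}(G_1)$: one must check that the pullback action of $G_2$ along the matched-pair diffeomorphisms $\psi^{-1}\trt\colon G_1\to G_1$ is a continuous smooth Fr\'echet representation, that $\Omega^{\bullet}(G_2/K_2)$ remains a resolution of $\Rb$ by relatively injective smooth $G_2$-modules after completed tensoring with such a $V$ (equivalently, that the contracting homotopies and the van~Est/Dupont maps are continuous and functorial in $V$), and that Dupont's map is built purely from the $G_2$-data so that it is genuinely a morphism of the full double complex rather than only of the rows. Once these continuity and functoriality verifications are in place, the spectral-sequence comparison is routine, because the bicomplex is concentrated in the strip $0\le q\le\dim G_1$ and each total degree receives contributions from only finitely many bidegrees.
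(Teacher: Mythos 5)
Your argument is sound in outline, but it is organized differently from the way the paper (and its source) handles this statement: the paper simply cites Haefliger for this proposition and, for the directly analogous Theorem on representative cohomology, proves the isomorphism by embedding \emph{both} edge complexes into a single augmented double complex $C^{p}\big(G_2^{\delta};\Omega^{q}(G_2/K_2\times G_1)\big)$ (with $L_2$ in place of $G_2/K_2$ in the representative case), checking that the columns are exact because the fiber $G_2/K_2$ is smoothly contractible and that the augmented rows are exact via an explicit contracting homotopy built from averaging/projection onto the invariant part, and then concluding by the standard zig-zag that both augmentations are quasi-isomorphisms. You instead build a direct comparison map $\Phi\colon B^{\bullet,\bullet}\to C_d^{\bullet,\bullet}$ by Dupont-type integration over affine simplices, prove it is a column-wise quasi-isomorphism by quoting van Est with coefficients in the Fr\'echet module $\Omega^{q}(G_1)$, and finish with the comparison theorem for the (finite) $q$-filtration. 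The two routes are equivalent in substance---both rest on the contractibility of $G_2/K_2$ and on the acyclicity of differentiable cohomology with ``induced'' coefficients---but they buy different things: the paper's common-bicomplex argument avoids invoking the full van Est theorem for infinite-dimensional coefficients as a black box (the two exactness checks are done by hand, and the homotopy is elementary), whereas your approach produces an explicit cochain-level map realizing the isomorphism, which is exactly what this paper needs later for its $\mathcal{E}$ and $\mathcal{D}$ constructions. Two small cautions: the paper's $C^{p,q}$ consists of \emph{totally antisymmetric} equivariant cochains, so your identification of a column with ``the'' homogeneous differentiable cochain complex should either pass through the antisymmetrization quasi-isomorphism or note that the integration map already lands in the antisymmetric subcomplex; and the continuity/relative-injectivity verifications for $V=\Omega^{q}(G_1)$ that you flag are genuinely where the work lies---they are not automatic, though they are standard in the Hochschild--Mostow/van Est framework.
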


\subsection{Representative cohomology of action groupoid}
We refine Haefliger's differentiable cohomology of groupoid by defining a subcomplex, $C^{p}_{\cR}( G_2^{\delta};\Omega_{G_1}^{q})$ of $C^{p}_{d}( G_2^{\delta};\Omega_{G_1}^{q})$. The elements of $C_{\cR}^{p,q} =C^{p}_{\cR}( G_2^{\delta};\Omega_{G_1}^{q})$ are forms depending {\it representatively} on $p+1$ elements of $G_2$. i.e., the coefficient functions are products of $p+1$ representative functions on $G_2$, we can write $\alpha \in C_{\cR}^{p,q}$ as 
\begin{align}
\alpha(\psi_0,\dots,\psi_p)=\sum c_j \rho_j  ,
\end{align}
where $c_j$ are representative functions on $G_2$.

Since representative functions are smooth, we can get a subcomplex and define the {\it representative cohomology $H^{\bullet}_{\cR}( \cG;\Rb)$}:

\begin{Definition}
Let $\cG=G_1 \rtimes G_2^{\delta}$. The representative cohomology $H^{\bullet}_{\cR}( \cG;\Rb)$ is the cohomology of the simple complex associated to the double complex $C^{p}_{\cR}( G_2^{\delta} ;\Omega_{G_1}^{q})$.
\end{Definition}

\begin{Theorem}\label{representative cohomology}
Let $\cG=G_1 \rtimes G_2^{\delta}$.  The representative cohomology $H^{\bullet}_{\cR}( \cG;\Rb)$ is isomorphic to the cohomology of $G_2$-invariant forms on $L_2 \times G_1$, where $L_2$ is the nucleus of $G_2$, and $G_2$ acts diagonally.
\end{Theorem}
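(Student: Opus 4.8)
The plan is to run, with modifications, the proof of Haefliger's computation quoted above (\cite[Thm.~4.2]{haefliger_differential_1972}), replacing the maximal compact subgroup $K_2\subset G_2$ by the reductive Levi factor $H_2$ throughout, the point being that representative coefficient functions are matrix coefficients of finite--dimensional representations of $G_2$, so they furnish $H_2$--modules that are \emph{semisimple} (finite--dimensional representations of the reductive group $H_2$ are completely reducible) — and semisimplicity of the isotropy action is exactly what compactness of $K_2$ is used for in the classical argument. First I would homogenize the bicomplex: using the $G_2$--equivariance condition, identify $(C^{p,q}_{\cR},d_1,d_2)$ with the bicomplex of inhomogeneous $p$--cochains $G_2^{p}\to\Omega^{q}(G_1)$ whose coefficient functions, expanded in a left--$G_1$--invariant coframe $\{\rho_j\}$, lie in $\cR(G_2)^{\otimes p}=\cR(G_2^{\,p})$, with $d_1$ the group--cohomology differential for the smooth $G_2$--module $\Omega^{q}(G_1)$ (action by $(\psi^{-1}\trt)^{\ast}$) and $d_2$ the de Rham differential on $G_1$. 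One must check this is a genuine subcomplex, i.e.\ that $d_1$ preserves representativity: this holds because $\Delta$ on $\cR(G_2)$ lands in $\cR(G_2)\otimes\cR(G_2)$ and because the $\trt$--twist of a representative coframe again has representative coefficients — precisely the content of \eqref{coact}--\eqref{leftcoaction2} together with Proposition \ref{11}. Denote the resulting $d_1$--cohomology $H^{\bullet}_{\cR}(G_2;\Omega^{q}(G_1))$.

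The heart of the matter is a \emph{representative van Est theorem}: for a smooth $G_2$--module $V$ the restriction--to--the--Lie--algebra map induces an isomorphism $H^{\bullet}_{\cR}(G_2;V)\xrightarrow{\ \sim\ }H^{\bullet}(\Fg_2,H_2;V)$, where the right side is the relative Lie algebra cohomology computed by $\big(\wedge^{\bullet}(\Fg_2/\Fh_2)^{\ast}\otimes V\big)^{H_2}$. I would prove this by the usual van Est double complex (invariant forms on $EG_2\times G_2/H_2$, or the Dupont--Shulman mixed simplicial/Lie complex): the spectral sequence of that bicomplex collapses after one page in the classical case because the $K_2$-- (resp.\ $\Fk_2$--)modules occurring are semisimple, so invariants are exact; here the coefficient functions restrict to elements of a locally finite $H_2$--module, and such modules are completely reducible over the reductive $H_2$, so the same collapse occurs with $H_2$ in place of $K_2$. (Equivalently one may phrase this as cosemisimplicity of $\cR(H_2)$, using a Hochschild--Mostow factorization $\cR(G_2)\cong\cR(H_2)\otimes\cR(L_2)$.) Since $H_2$ is connected, $H^{\bullet}(\Fg_2,H_2;V)=H^{\bullet}(\Fg_2,\Fh_2;V)$.

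It remains to assemble. From $G_2=H_2\ltimes L_2$ we get $\Fg_2=\Fh_2\oplus\Fl_2$ with $\Fl_2$ an ideal, hence $\Fg_2/\Fh_2\cong\Fl_2$ as $H_2$--modules; and since $L_2$ is simply connected with $\Fl_2=\mathfrak{nil}(\Fg_2)$, the map $l\mapsto lH_2$ is a diffeomorphism $L_2\xrightarrow{\ \sim\ }G_2/H_2$ intertwining the left--translation and conjugation actions of $L_2$ and $H_2$ on $L_2$ with the left $G_2$--action on $G_2/H_2$. By the standard identification of $G_2$--invariant forms on the associated bundle $(G_2/H_2)\times G_1\cong G_2\times_{H_2}G_1$ with the relative Chevalley--Eilenberg complex, $\big(\wedge^{\bullet}(\Fg_2/\Fh_2)^{\ast}\otimes\Omega^{\bullet}(G_1)\big)^{H_2}$ with the combined differential is exactly the complex of $G_2$--invariant forms on $L_2\times G_1$ for the diagonal action. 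Thus, running the spectral sequence of $(C^{p,q}_{\cR},d_1,d_2)$: the first page is $H^{\bullet}_{\cR}(G_2;\Omega^{\bullet}(G_1))$ with the de Rham differential $d_2$, which by the previous paragraph is the complex of $G_2$--invariant forms on $L_2\times G_1$; its cohomology is $H^{\bullet}_{\cR}(\cG;\Rb)$, which gives the claim.

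The main obstacle is making the representative van Est step rigorous: one must show the inclusion of the representative subcomplex is compatible with the van Est comparison maps and with the degeneration of the relevant spectral sequence, and in particular verify carefully that the passage from $K_2$ to $H_2$ is legitimate — i.e.\ that the only place compactness enters the classical proof is through semisimplicity of the isotropy representation, which for representative coefficients is restored by complete reducibility of finite--dimensional $H_2$--modules. The remaining points — well--definedness of the representative subcomplex under $d_1$, and the diffeomorphism $G_2/H_2\cong L_2$ (using connectedness of $H_2$ and the nilpotency hypothesis on $L_2$) — are routine by comparison.
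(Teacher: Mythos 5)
Your proposal is sound and rests on the same two pillars as the paper's argument: contractibility of $L_2\cong H_2\backslash G_2$ and complete reducibility of the locally finite $H_2$-modules spanned by representative coefficient functions (replacing the compact averaging over $K_2$ in Haefliger's proof). The organization is different, though. You route the argument through an abstract ``representative van Est theorem'' $H^{\bullet}_{\cR}(G_2;V)\cong H^{\bullet}(\Fg_2,\Fh_2;V)$ and then identify the relative Chevalley--Eilenberg complex with $G_2$-invariant forms on $L_2\times G_1$; the paper instead works with a single double complex $C^{p}_{\cR}(G_2^{\delta};\Omega^{q}_{L_2\times G_1})$ admitting two augmentations, and proves row-exactness by writing down an explicit contracting homotopy $H(\alpha)(\psi_1,\dots,\psi_p)(l\times m)=(-1)^q\pi\bigl(f^{\alpha}_{(\psi_1,\dots,\psi_p)}\bigr)$ with $f^{\alpha}_{(\psi_1,\dots,\psi_p)}(h)=\alpha(hl,\psi_1,\dots,\psi_p)(l\times m)$, where $\pi$ is the projection onto the $H_2$-invariant part. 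That homotopy is precisely the content you defer as ``the main obstacle'' of making the van Est step rigorous: evaluating the zeroth group variable at $hl$ (with $l$ the $L_2$-coordinate of the base point) and then projecting onto $H_2$-invariants is what substitutes for integration over $K_2$, and it is only available after first enlarging the coefficients from $\Omega^{q}(G_1)$ to $\Omega^{q}(L_2\times G_1)$ --- so if you pursue your plan you will be led back to essentially this construction. Two minor points to tighten: your van Est statement should not be asserted for an arbitrary smooth $G_2$-module $V$ (representative cochains are only defined for the specific coefficients at hand, and $\Omega^{q}(G_1)$ is not locally finite), and your description of the first page is imprecise --- $E_1^{p,q}$ is the $d_1$-cohomology $H^{p}_{\cR}(G_2;\Omega^{q}(G_1))$, not the invariant-forms complex itself; what you actually need, and what your comparison map supplies, is a morphism of bicomplexes inducing an isomorphism on $E_1$, whence an isomorphism of total cohomologies.
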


\begin{proof}

Step 1: \\
Consider the first inclusion
\begin{align*}
\varepsilon_1 : C^{p}_{\cR}( G_2^{\delta};\Omega_{G_1}^{q}) \to C^{p}_{\cR}( G_2^{\delta} ;\Omega_{(L_2\times G_1)}^{q})
\end{align*}
induced by the projection $\pi: L_2 \times G_1 \to G_1$ induces an isomorphism of the cohomology of the double complexes.

By a familiar spectral sequence argument (\cite[p.~86]{godement_topologie_1958}), we only need to show that the inclusion above induces an isomorphism of the vertical deRham cohomology. This is true because the fiber of $\pi$ is smoothly contractible.\\

Step 2: \\
Follow the usual double complex argument as in~\cite{bott_differential_1995}. The augmented sequence of the horizontal complex is exact for every $q$:
\begin{align*}
0 \to (\Omega_{(L_2\times G_1)}^{q})^{G} \xrightarrow{\varepsilon_2} C^{0}_{\cR}( G_2^{\delta} ;\Omega_{(L_2\times G_1)}^{q}) \xrightarrow{d_1} C^{1}_{\cR}( G_2^{\delta} ;\Omega_{(L_2\times G_1)}^{q}) \xrightarrow{d_1} \cdots
\end{align*} 
where $(\Omega_{(L_2\times G_1)}^{q})^{G_2}$ are $G_2$-invariant forms on $L_2 \times G_1$, $\varepsilon_2$ is another inclusion. Because of this exactness we can use the usual diagram chasing argument for double complex with exact rows to get the fact that $\varepsilon_2$ induces an isomorphism from the cohomology of augmented column complex to the cohomology of double complex $C^{p}_{\cR}( G_2^{\delta} ;\Omega_{(L_2\times G_1)}^{q})$. \\

Step 3: \\
Finally the exactness of the above sequence is provided by the given homotopy (cf.,~\cite[Lemma~2.3]{kumar_extensions_2006}):
\begin{align}
\begin{aligned}
&H:\ C^{p}_{\cR}( G_2^{\delta} ;\Omega_{(L_2\times G_1)}^{q})\to \ C^{p-1}_{\cR}( G_2^{\delta} ;\Omega_{(L_2\times G_1)}^{q})\\
&H(\alpha)(\psi_1,\dots,\psi_{p})(l\times m)=(-1)^q \pi(f_{(\psi_1,\dots,\psi_{p})}^\alpha),
\end{aligned}
\end{align}
where
\begin{align}
\begin{aligned}
f_{(\psi_1,\dots,\psi_{p})}^\alpha (h):=\alpha(hl,\psi_1,\dots,\psi_{p})(l\times m),
\end{aligned}
\end{align}
and $\pi$ the projection to its $H_2$ invariant part, since $H_2$ is reductive and every $H_2$ module is completely reducible.
We have
\begin{align}
\begin{aligned}
&H d_1(\alpha)(\psi_0,\dots,\psi_{p})(l\times m)\\
=&(-1)^q \pi( f_{(\psi_0,\psi_1,\dots,\psi_{p})}^{ d_1 \alpha}) \\
=& \alpha(\psi_0,\dots,\psi_{p})(l\times m) - \pi( f_{(\psi_1,\dots,\psi_{p})}^{  \alpha}) +\sum_{j=1}^{p}(-1)^{j+1}\pi( f_{(\psi_0,\dots \check{\psi_j} \dots,\psi_{p})}^{  \alpha}) ,
\end{aligned}
\end{align}
\begin{align*}
d_1 H (\alpha)(\psi_0,\dots,\psi_{p})(l\times m)=\sum_{j=0}^{p}(-1)^{j}\pi( f_{(\psi_0,\dots \check{\psi_j} \dots,\psi_{p})}^{  \alpha}) .
\end{align*}
Therefore
\begin{align}
H d_1 + d_1 H =I.
\end{align}
\end{proof}

\subsection{A smaller subcomplex \texorpdfstring{$D ^{p,q} $}{Dpd} and Connes' \texorpdfstring{$\Phi$}{Phi} map}

The complex $C_{\cR}^{p,q} $ has a subcomplex $D ^{p,q} $, which consists of forms $\alpha(\psi_0 ,\dots,\psi_p )$ that can be written as 
\begin{align}
\alpha(\psi_0,\dots,\psi_p)=\sum c_j \rho_j  ,
\end{align}

where $c_j$ are smooth functions on $G_1$ which are finite linear combinations of finite products of the following functions,
\begin{align*}
\vp  \in  G_1 \to F_i \big( (\psi_i \tlt \vp)  \big),\quad F_i \in \cR(G_2) ,
\end{align*}
more precisely,
\begin{align}
c_j(\vp)=\sum_{\text{finite}} \prod_{i=0}^{p}F_i \big( (\psi_i \tlt \vp)\big),\quad F_i \in \cR(G_2).
\end{align}
The important fact about $\alpha$ is that it is not only $G_2$-equivariant but also $G$-equivariant, i.e.,
\begin{align}
\alpha(\psi_0 \tlt \phi,\dots,\psi_p \tlt \phi )=(\phi^{-1} \trt)^{\ast}   \alpha(\psi_0,\dots,\psi_p), \quad \forall \psi_i \in G_2,\phi=\vp\psi \in G_1G_2=G.
\end{align}

We use Connes' $\Phi$ map from bicomplex $(C^{p,q},d_1,d_2)$ to the $(b,B)$ bicomplex of the algebra $\cA=C_c^{\ify}(G_1)\rtimes G_2^{\delta}$ and we show that the resulting cochains are representative as in~\ref{representativecochain}, i.e., in the range of $\lambda$. Hence we have $\Phi$ from $(C^{p,q},d_1,d_2)$ to the $(b,B)$ bicomplex of $\cH$.

Let us recall the construction of $\Phi$. As in~\cite{connes_noncommutative_1994}, we let $\mathcal{B}$ be the tensor product,
\begin{align*}
\mathcal{B}=A^\ast(G_1) \otimes \Lambda^{\ast} (\bC (G_2')),
\end{align*}
where $A^\ast(G_1)$ is the algebra of smooth forms with compact support on $G_1$, the exterior algebra $\Lambda^{\ast} (\bC (G_2'))$ of the linear space $\bC (G_2')$ with basis the $\delta_{\psi}, \psi \in G_2$, with $\delta_e=0$. We take the crossed product  
\begin{align*}
\mathcal{C}=\mathcal{B} \rtimes  G_2.
\end{align*}
The action of $G_2$ on $\mathcal{B}$ by automorphisms is defined as 
\begin{align}
\begin{aligned}
U^{\ast}_{\psi} \omega U^{}_{\psi} = \widetilde{\psi}^{\ast} \omega = \omega \circ \widetilde{\psi} \qquad \forall \omega \in A^\ast(G_1) , \\
U^{\ast}_{\psi} \delta_{\psi_2} U^{}_{\psi} =\delta_{\psi_2 \psi} -\delta_{\psi}  \qquad \forall \psi_2 \in G_2.
\end{aligned}
\end{align}
We write the elements of $\mathcal{C}$ as a finite sum
\begin{align*}
c=\sum_{\psi} b U^{\ast}_{\psi}, \qquad b \in \mathcal{B},
\end{align*}
then we can write the differential $d$ in $\mathcal{C}$ as  
\begin{align}
d (b U^{\ast}_{\psi})= d b U^{\ast}_{\psi}  - (-1)^{\partial b}b \delta_{\psi} U^{\ast}_{\psi},
\end{align}
where the first term is only the exterior differential in $A^\ast(G_1)$.

A cochain $\gamma$ in $C^{p,q}$ determines a form $\widetilde{\gamma}$ on $\mathcal{C}$ by
\begin{align}
\widetilde{\gamma}(\omega\otimes\delta_{\psi_1}\cdots \delta_{\psi_p} )=\int_{G_1}\omega \wedge \gamma(e,\psi_1,\dots,\psi_p),\quad \widetilde{\gamma}(b U^{\ast}_{\psi})=0 \quad \text{if}\ \psi \ne e.
\end{align}
we define $\Phi(\gamma)$ by
\begin{align}
\begin{aligned}
\Phi ( \gamma )  (a^0,\dots,a^{l}) &=\frac{p!}{(l+1)!} \sum_{j=0}^{l} (-1)^{j(l-j)}\widetilde{\gamma}(da^{j+1}\cdots da^{l} a^0 da^{1}\cdots da^{j}) \\
\text{where}\quad l&=p+\text{dim}(G_1)-q,\qquad a^0,\dots,a^{l} \in \cA.
\end{aligned}
\end{align}
\newline
We have the inclusions:
\begin{align}
\begin{aligned}
D^{p,q} \subset C^{p,q}_{\cR} \subset C^{p,q}_{d} \subset C^{p,q}
\end{aligned}
\end{align}
and we use $\Phi_{d}$, $\Phi_{\cR}$, $\Phi_{D}$ to denote the restriction of $\Phi$ on these above subcomplexes, they are still chain maps because $(b,B)$ maps will not change the type of the cochains.

\begin{Lemma}
Images of $\Phi_{D}$ are representative cochains as defined in~\ref{representativecochain}.
\end{Lemma}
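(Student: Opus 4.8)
The statement to be proved is that for every $\gamma\in D^{p,q}$, setting $l=p+\dim G_1-q$, the cyclic cochain $\Phi(\gamma)\in C^{l}(\cA)$ lies in the range of $\lambda$; equivalently, I must exhibit an element $\xi=\sum h^1\otimes\cdots\otimes h^l\in\cH^{\otimes l}=C^{l}(\cH)$ with $\Phi(\gamma)=\lambda(\xi)$, i.e. with $\Phi(\gamma)(a^0,\dots,a^l)=\sum\tau\big(a^0\,h^1(a^1)\cdots h^l(a^l)\big)$ for all $a^i\in\cA$. The plan is to evaluate $\Phi(\gamma)$ directly on generators $a^k=f_kU^{\ast}_{\psi_k}$ of $\cA$, using the explicit shape of the elements of $D^{p,q}$, and to read the $h^k$'s off the output.

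First I would expand $da^k=df_k\,U^{\ast}_{\psi_k}-f_k\,\delta_{\psi_k}U^{\ast}_{\psi_k}$, multiply out the word $da^{j+1}\cdots da^{l}\,a^0\,da^1\cdots da^{j}$ inside $\mathcal{C}=\mathcal{B}\rtimes G_2$ by pushing all the $U^{\ast}$'s to the right through the relations $U^{\ast}_{\psi}\omega=(\omega\circ\widetilde{\psi})\,U^{\ast}_{\psi}$ and $U^{\ast}_{\psi}\delta_{\psi'}=(\delta_{\psi'\psi}-\delta_{\psi})U^{\ast}_{\psi}$, and then apply $\widetilde{\gamma}$. Because $\widetilde{\gamma}(bU^{\ast}_{\psi})=0$ unless $\psi=e$, only terms with trivial accumulated group element survive; each survivor becomes $\int_{G_1}\eta\wedge\gamma(e,\chi_1,\dots,\chi_p)$, where the $\chi_s$ are products of consecutive $\psi_k$'s and $\eta$ is a product of certain $f_k$'s with de Rham derivatives $d(f_m\circ\widetilde{\psi_{\bullet}})$ of pulled-back $f_m$'s; by (\ref{zpsi})--(\ref{ytltx}) the latter re-express through the matrices $\Gamma$, hence through functions $\varphi\mapsto\gamma^i_j(\psi_\bullet\tlt\varphi)$, and $\eta$ automatically carries the de Rham degree $\dim G_1-q$ required to pair with the $q$-form $\gamma$.

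Now I would use that $\gamma\in D^{p,q}$ has coefficients, in the left-invariant coframe, of the form $\sum_{\text{finite}}\prod_i F_i\big((\chi_i\tlt\varphi)\big)$ with $F_i\in\cR(G_2)$. Feeding this into $\gamma(e,\chi_1,\dots,\chi_p)$ and repeatedly applying the matched-pair identity $(\psi\psi')\tlt\varphi=\big(\psi\tlt(\psi'\trt\varphi)\big)(\psi'\tlt\varphi)$, the coproduct of $\cR(G_2)$, and the cocycle relation (\ref{1-gamma}), every factor $F_i((\chi_i\tlt\varphi))$ and every $\Gamma$-factor breaks into pieces of the shape $F'\big(\psi_m\tlt((\psi_{m-1}\cdots\psi_0)\trt\varphi)\big)$. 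Collecting, the integrand assumes exactly the form $f_0(\varphi)\prod_{k=1}^{l}g_k\big((\psi_{k-1}\cdots\psi_0)\trt\varphi\big)$ with $g_k(\varphi)=F^k(\psi_k\tlt\varphi)\,\widetilde{Z_{I_k}}(f_k)(\varphi)$ for suitable $F^k\in\cR(G_2)$ and PBW monomials $Z_{I_k}\in\cU(\Fg_1)$; by the action formula $\widetilde{F\acl Z_I}(fU^{\ast}_{\psi})(\varphi)=F(\psi\tlt\varphi)\widetilde{Z_I}(f)(\varphi)U^{\ast}_{\psi}$, the multiplication (\ref{algebramultiplication}), and the definition (\ref{tracetau}) of $\tau$, this is precisely $\tau\big(a^0\,h^1(a^1)\cdots h^l(a^l)\big)$ with $h^k=F^k\acl Z_{I_k}$, where the $\sigma$-trace property of $\tau$ is invoked to cycle $a^0$ to the front whenever it occurs inside the word. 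Summing over $j$ with signs $(-1)^{j(l-j)}$ and the prefactor $\frac{p!}{(l+1)!}$ assembles a definite $\xi\in\cH^{\otimes l}$, giving $\Phi(\gamma)=\lambda(\xi)$ and hence $\Phi(\gamma)\in\text{Im}(\lambda^{\natural})$ as in Definition~\ref{representativecochain}.

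The main difficulty is the combinatorial bookkeeping of the middle step, and in particular checking that the iterated splittings of the $\delta$-part ($\delta_{\psi'\psi}-\delta_{\psi}$) dovetail with the splittings of the representative coefficients of $\gamma$, so that after the cancellations each derivative block $\widetilde{Z_{I_k}}$ lands only on the corresponding $f_k$. This is exactly where one needs $D^{p,q}$ rather than merely $C^{p,q}_{\cR}$: it is the $G$-equivariance of $\gamma$ — equivalently the product-over-group-arguments form of its coefficients — that forces this factorization, so that the output is built from honest elements $h^k\in\cH$ and not from a more general, non-representative cyclic cochain on $\cA$. Finally, since $\lambda$ is injective (Theorem~\ref{injectivehomomorphism}), the $\xi$ produced this way is the unique preimage of $\Phi(\gamma)$, so any apparent dependence on the chosen presentation of $\gamma$ is spurious.
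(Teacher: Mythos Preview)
Your proposal is correct and follows essentially the same approach as the paper. The paper's own proof is a one-line reference to the analogous computation in \cite[p.~233--234]{connes_hopf_1998}, noting only that rearranging terms uses the coproduct of $\cR(G_2)$ and that permuting functions past $U^{\ast}_{\psi}$ uses the fact that left and right translates of representative functions remain representative; your write-up is exactly an explicit unpacking of that argument, including the correct observation that the strong $G$-equivariance (not merely $G_2$-equivariance) of elements of $D^{p,q}$ is what makes the factorization into elements $h^k\in\cH$ go through.
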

\begin{proof}
we follow exactly the same argument as in~\cite[p.~233-234]{connes_hopf_1998}. When we need to rearrange the terms we use the coproduct of $\cR(G_2)$, when we need to permute functions and $U^{\ast}_{\psi}$ we use the fact that the left and right translates of representative function is still a representative function. 
\end{proof}

\begin{Proposition}
$\Phi_{D}$ is a chain map from the bicomplex $\big( D^{p,q} ,d_1,d_2 \big)$ to the $(b,B)-$complex of the Hopf algebra $\cH$.
\end{Proposition}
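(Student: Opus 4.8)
The plan is to obtain the statement essentially for free from three facts already at our disposal: (i) Connes' theorem that $\Phi$ is a morphism of bicomplexes from $(C^{p,q},d_1,d_2)$ to the $(b,B)$-bicomplex of $\cA$ (\cite{connes_noncommutative_1994,connes_hopf_1998}); (ii) that $(D^{p,q},d_1,d_2)$ is a sub-bicomplex of $(C^{p,q},d_1,d_2)$; and (iii) the preceding Lemma together with Corollary~\ref{representativecochain2}, which say that $\Phi_D$ takes its values in $\text{Im}(\lambda^{\natural})$, a subspace that we have identified with the Hopf cyclic module $\cH^{\natural}_{(\delta,\sigma)}$. Granting these, the conclusion is a transport of structure along the $\Lambda$-module isomorphism $\lambda^{\natural}$.

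First I would recall, or spell out, why $(D^{p,q},d_1,d_2)$ is a sub-bicomplex, since this is the one point that is not purely formal. Closure under $d_1$ is clear: $d_1$ only deletes arguments $\psi_j$, and a coefficient of the admissible form $c_j(\vp)=\sum_{\text{finite}}\prod_{i} F_i(\psi_i\tlt\vp)$, together with the $G$-equivariance constraint, is visibly preserved. For closure under the de Rham differential $d_2$, I would write a cochain of $D^{p,q}$ as $\alpha=\sum_j c_j\rho_j$ with $\rho_j$ left $G_1$-invariant forms: then $d\rho_j$ is again a combination of left-invariant forms with structure-constant coefficients, while $dc_j$ expanded in the left-invariant coframe $\widetilde{\omega}_k$ dual to $\widetilde{Z}_k$ has coefficients $\widetilde{Z}_k(c_j)$. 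Since $\widetilde{Z}_k\big(\vp\mapsto F(\psi\tlt\vp)\big)=\big(\vp\mapsto (Z_k\trt F)(\psi\tlt\vp)\big)$ and $Z_k\trt F\in\cR(G_2)$ by Proposition~\ref{11}, the Leibniz rule shows that every coefficient of $dc_j$ is again a finite sum of finite products of functions of the required form; hence $d_2(D^{p,q})\subseteq D^{p,q+1}$. The bidegree bookkeeping $l=p+\dim(G_1)-q$ of Connes' $\Phi$ is inherited unchanged.

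Then comes the argument proper. Because $D^{\bullet,\bullet}$ is a sub-bicomplex of $C^{\bullet,\bullet}$, the restriction $\Phi_D=\Phi|_{D^{\bullet,\bullet}}$ is still a chain map, a priori landing in the $(b,B)$-bicomplex of $\cA$. By the preceding Lemma its image lies in $\text{Im}(\lambda^{\natural})$, and $\text{Im}(\lambda^{\natural})$ is a sub-$\Lambda$-module of $\cA^{\natural}$, being the image of the morphism $\lambda^{\natural}$ of Theorem~\ref{injectivehomomorphism}; in particular it is stable under $b$ and $B$. The operators $b$ and $B$ are built functorially out of the faces, degeneracies and cyclic operators of a cyclic module, so the $\Lambda$-module isomorphism $\lambda^{\natural}\colon\cH^{\natural}_{(\delta,\sigma)}\xrightarrow{\sim}\text{Im}(\lambda^{\natural})$ of Corollary~\ref{representativecochain2} intertwines them on the two sides. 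Composing, $(\lambda^{\natural})^{-1}\circ\Phi_D$ is a chain map from $(D^{p,q},d_1,d_2)$ to the $(b,B)$-bicomplex $\big(CC^{\bullet}(\cH;{}^{\sigma}\bC_{\delta}),b,B\big)$ of $\cH$; under the identification of $\cH^{\natural}_{(\delta,\sigma)}$ with $\text{Im}(\lambda^{\natural})$ adopted after Corollary~\ref{representativecochain2}, this is exactly the assertion.

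I expect the only genuine obstacle to be the stability of $D^{p,q}$ under the de Rham differential, and that is dispatched by Proposition~\ref{11} together with the Leibniz rule as above; everything else — restricting a chain map to a sub-bicomplex and moving the target along the $\Lambda$-module isomorphism $\lambda^{\natural}$ — is formal. In particular no fresh computation with $\Phi$ itself is required beyond Connes' original verification, since the preceding Lemma has already confined its image inside $\text{Im}(\lambda^{\natural})$.
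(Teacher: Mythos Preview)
Your proof is correct and follows essentially the same approach as the paper: restrict Connes' chain map $\Phi$ to the sub-bicomplex $D^{\bullet,\bullet}$, invoke the preceding Lemma to confine the image to $\text{Im}(\lambda^{\natural})$, and then transport along the identification $\cH^{\natural}_{(\delta,\sigma)}\cong\text{Im}(\lambda^{\natural})$ from Corollary~\ref{representativecochain2}. You supply more detail than the paper does --- in particular the verification via Proposition~\ref{11} that $D^{p,q}$ is closed under the de Rham differential $d_2$, and the explicit remark that $\text{Im}(\lambda^{\natural})$ is a sub-$\Lambda$-module and hence stable under $b$ and $B$ --- but the underlying argument is identical.
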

\begin{proof}

It is proved in~\cite[III.2.$\delta$,Thm.~14]{connes_noncommutative_1994} that $\Phi$ is a chain map from the bicomplex $\big( C^{p,q} ,d_1,d_2 \big)$ to the $(b,B)-$complex of the algebra $\cA$. From the lemma above, the restriction $\Phi_{D}$ maps $D^{p,q}$ to representative cochain space, i.e., $Im(\lambda^{\natural})$, which is identified with $\cH^{\natural}_{(\delta,\sigma^{})}$ as in Corollary~\ref{representativecochain2}. 

\end{proof}

\section{Explicit construction of Hopf cyclic classes}
\label{sec:cyclic}

As in diagram~\ref{diagram1}, the work of Rangipour and S\"{u}tl\"{u} (\cite{rangipour_van_2012}) can be split into two steps: a vertical map from the Hopf cyclic complex of $\cR(G_2)  \acl \cU(\Fg_1) $ to the complex $C^{p}_{\cR}(G_2,\wedge ^{q} \Fg_1^{\ast})$ and a horizontal map $\nu$ from $C^{p}_{\cR}(G_2,\wedge ^{q} \Fg_1^{\ast})$ to the relative Lie algebra complex. The first process involves a number of quasi-isomorphic complexes and maps, hence will be recalled at the beginning of this section. We then construct an isomorphism $\Theta$ that link $C^{p}_{\cR}(G_2,\wedge ^{q} \Fg_1^{\ast})$ with $D^{p,q}$ from previous section. In the middle of the section we construct an one-sided inverse of $\nu$ map, which we will call $\mathcal{E}$. To the end we prove the main theorem of our paper: $ \mathcal{E} \circ \Theta  \circ \mathcal{D} $ is a quasi-isomorphism, which goes along the opposite direction comparing to the maps of Rangipour and S\"{u}tl\"{u}, that can be used to transit cohomology classes from Lie algebra cohomology complex to representative cocycles on the convolution algebra explicitly.

\subsection{Bicomplexes that calculate Hopf cyclic cohomology of \texorpdfstring{$\cH$}{H}}
\label{subsec:bicomplexes}
In this section we will review a few quasi-isomorphic bicomplex that are all quasi-isomorphic to the cyclic complex of $\cH$. We refer to~\cite{moscovici_hopf_2009,moscovici_hopf_2011,rangipour_van_2012} for the detailed discussion. The first step is taken from~\cite{rangipour_van_2012}, but with our setting; while the last three steps are extra steps from~\cite{moscovici_hopf_2009,moscovici_hopf_2011}, also with our setting, that help us to link to $D^{p,q}$.\\
\newline
step 1
\newline
From $CC^{\bullet}(\cH,^{\sigma^{}} \bC _ \delta)$ to $CC^{\bullet}(\cH^{\cop},^{\sigma^{-1}}\hspace{-3pt} \bC _ \delta)=CC^{\bullet}(\cR(G_2) \acl \cU(\Fg_1),^{\sigma^{-1}}\hspace{-3pt} \bC _ \delta)$.
\newline
It was shown in \cite[Prop3.1]{moscovici_hopf_2009} that 
\begin{align*}
&\mathcal{T}:CC^{\bullet}(\cH,^{\sigma^{}} \bC _ \delta) \to \ CC^{\bullet}(\cH^{\cop},^{\sigma^{-1}}\hspace{-3pt} \bC _ \delta),\\
&\mathcal{T}(1\otimes h^0 \otimes h^1 \times \cdots \otimes h^n)=1\otimes {\sigma^{-1}} h^0 \otimes h^n \otimes \cdots \otimes h^1
\end{align*}
defines an isomorphism of mixed complexes.\\

Recall that we have used antipode in our right coaction so the element $\sigma$ in~\cite{rangipour_van_2012} is our $\sigma^{-1}$. It is proved in~\cite[4.2]{rangipour_van_2012} that our $CC^{\bullet}(\cR(G_2) \acl \cU(\Fg_1),^{\sigma^{-1}}\hspace{-3pt} \bC _ \delta)$ is quasi-isomorphic to a double complex \label{doublecomplex2}
\begin{equation}
\begin{tikzpicture}[description/.style={fill=white,inner sep=2pt}]
\matrix (m) [matrix of math nodes, row sep=3em, column sep=3em, 
text height=1.5ex, text depth=0.25ex]
{   \vdots & \vdots & \vdots    \\
     \wedge^{2} \Fg_1^{\ast} & \wedge^{2} \Fg_1^{\ast} \otimes \cR &  \wedge^{2} \Fg_1^{\ast} \otimes \cR^{\otimes 2} &  \cdots   \\ 
     \Fg_1^{\ast} & \Fg_1^{\ast} \otimes \cR &  \Fg_1^{\ast} \otimes \cR^{\otimes 2} &  \cdots   \\ 
    \bC & \bC \otimes \cR &  \bC \otimes \cR^{\otimes 2} &  \cdots   \\  };
  
\path[transform canvas={yshift=0.6ex},->,font=\scriptsize]
(m-2-1) edge node[above] {$ b_{\cR} $} (m-2-2)  (m-2-2) edge node[above] {$ b_{\cR} $} (m-2-3) (m-2-3) edge node[above] {$ b_{\cR} $} (m-2-4)  
(m-3-1) edge node[above] {$ b_{\cR} $} (m-3-2)  (m-3-2) edge node[above] {$ b_{\cR} $} (m-3-3) (m-3-3) edge node[above] {$ b_{\cR} $} (m-3-4)  
(m-4-1) edge  node[above] {$ b_{\cR} $} (m-4-2)  (m-4-2) edge node[above] {$ b_{\cR} $} (m-4-3) (m-4-3) edge node[above] {$ b_{\cR} $} (m-4-4)    ;

\path[transform canvas={xshift=0ex},->,font=\scriptsize]
(m-4-1) edge node[left]{$\partial_{\Fg}$} (m-3-1)  (m-4-2) edge node[left]{$\partial_{\Fg}$} (m-3-2)   (m-4-3) edge node[left]{$\partial_{\Fg}$} (m-3-3)  
(m-3-1) edge node[left]{$\partial_{\Fg}$} (m-2-1)  (m-3-2) edge node[left]{$\partial_{\Fg}$} (m-2-2)   (m-3-3) edge node[left]{$\partial_{\Fg}$} (m-2-3) 
(m-2-1) edge node[left]{$\partial_{\Fg}$} (m-1-1)  (m-2-2) edge node[left]{$\partial_{\Fg}$} (m-1-2)   (m-2-3) edge node[left]{$\partial_{\Fg}$} (m-1-3) ;

\path[transform canvas={yshift=-0.6ex},->,font=\scriptsize]
(m-2-2) edge node[below] {$ B_{\cR} $} (m-2-1)  (m-2-3) edge node[below] {$ B_{\cR} $} (m-2-2) (m-2-4) edge node[below] {$ B_{\cR} $} (m-2-3)  
(m-3-2) edge node[below] {$ B_{\cR} $} (m-3-1)  (m-3-3) edge node[below] {$ B_{\cR} $} (m-3-2) (m-3-4) edge node[below] {$ B_{\cR} $} (m-3-3)  
(m-4-2) edge node[below] {$ B_{\cR} $} (m-4-1)  (m-4-3) edge node[below] {$ B_{\cR} $} (m-4-2) (m-4-4) edge node[below] {$ B_{\cR} $} (m-4-3)    ;

\end{tikzpicture} 
\end{equation}

the coboundary $\partial_{\Fg}$ is the Lie algebra cohomology coboundary of $\Fg_1$ with coefficients in $\cR ^{\otimes \ast}$ with right $\Fg_1$ action given by
\begin{align}
&( F^1 \otimes \cdots \otimes F^p) \tlt  Z=- Z \trt (F^1 \otimes \cdots \otimes F^p),
\end{align}
here the left action of $\Fg_1$ on $\cR ^{\otimes p}$ is extended from \ref{leftaction} but is different from the usual diagonal action. Because the coproduct of $\Fg_1$ we used here is not from $\cU(\Fg_1)$ but from $\cR(G_2)   \acl  \cU(\Fg_1)$.
The formula is given by
\begin{align*}
&Z \trt (F^1 \otimes \cdots \otimes F^p)= \\
&Z_{(1)_{<0>}} \trt F^1 \otimes Z_{(1)_{<1>}} (Z_{(2)_{<0>}} \trt F^2) \otimes \cdots \otimes Z_{(1)_{<p-1>}}\cdots Z_{(p-1)_{<1>}}(Z_{(p) }\trt F^p).
\end{align*}
The coboundary $b_{\cR}$ involve the coaction $\blacktriangledown$. It is the $b$ operator for coalgebra $\cR$ with coefficients in $\wedge^{\bullet} \Fg_1^{\ast}$. The left coaction is given by~\ref{leftcoaction2}
\begin{align*}
\blacktriangledown (\omega ^i)= S(\gamma^i_j) \otimes \omega^j  ,
\end{align*}
and then extend to $\wedge^{\bullet} \Fg_1^{\ast}$.

It has the explicit expression
\begin{align}
\begin{aligned}
& b_{\cR} ( \alpha \otimes F^1 \otimes \cdots \otimes F^p)=  \alpha \otimes 1 \otimes F^1 \otimes \cdots \otimes F^p \\
&+ \sum\limits_{1\le i \le p}^{} (-1)^{i}  \alpha \otimes F^1 \otimes \cdots \otimes \Delta (F^i) \otimes \cdots \otimes F^p \\
& +(-1)^{p+1} \alpha_{<0>} \otimes F^1 \otimes \cdots \otimes F^p \otimes \alpha_{<-1>}  ;
\end{aligned}
\end{align}
 
\begin{align}
\begin{aligned}
& B_{\cR} =(\sum_{i=0}^{p-1}(-1)^{(p-1)i}\tau_{\cR}^{i})\sigma_{\cR}\tau_{\cR} (1-(-1)^{p} \tau_{\cR}),\qquad \text{with}\\
& \tau_{\cR}( \alpha \otimes F^1 \otimes \cdots \otimes F^p)=\alpha_{<0>} \otimes S(F^1)\cdot (F^2 \otimes \cdots \otimes F^p \otimes \alpha_{<-1>})\\
& \sigma_{\cR}( \alpha \otimes F^1 \otimes \cdots \otimes F^p)=\varepsilon (F^p)\alpha  \otimes F^1 \otimes \cdots \otimes F^{p-1}.
\end{aligned}
\end{align}

Here we see no $\sigma^{-1}$ or $\delta$, because during the transition we have identified the module $^{\sigma^{-1}}\hspace{-3pt} \bC _{\delta} $ with $\wedge^{\text{\tiny dim} (\Fg_1)} \Fg_1^{\ast}$ via Poincar\'{e} isomorphism. \\

In~\cite{rangipour_van_2012}, the authors stopped at this complex and derived the desired quasi-isomorphism between relative Lie algebra cohomology and the Hopf cyclic cohomology that we begin from. However, we want to go to $C^{p}_{\cR}(G_2,\wedge ^{q} \Fg_1^{\ast})$ and $D^{p,q}$. Therefore we pass from the previous complex to its homogeneous version and then to the equivariant cochains. \\
\newline
step 2
\newline
homogeneous version\\

We define $C^{p,q}_{\cR}(\wedge \Fg_1^{\ast},\otimes \cR(G_2)):=(\wedge^{q} \Fg_1^{\ast}\otimes \cR(G_2)^{\otimes p+1})^{\cR(G_2)}$. An element $\sum \alpha \otimes F^0 \otimes \cdots \otimes F^p$ is in $(\wedge^{p} \Fg_1^{\ast}\otimes \cR(G_2)^{\otimes p+1})^{\cR(G_2)}$ if it satisfies the $\cR-$coinvariance condition:
\begin{equation*}
\sum \alpha_{<0>} \otimes F^0 \otimes \dots \otimes F^p \otimes \alpha_{<-1>}   
 = \sum \alpha \otimes F^0_{\ (1)} \otimes F^1_{\ (1)} \otimes \dots \otimes F^p_{\ (1)} \otimes F^0_{\ (2)}\cdots F^p_{\ (2)}.
\end{equation*}
The two complexes are isomorphic via
\begin{align}
\begin{aligned}
&\mathcal{I}: \wedge^{q} \Fg_1^{\ast}\otimes \cR(G_2)^{\otimes p} \to (\wedge^{q} \Fg_1^{\ast}\otimes \cR(G_2)^{\otimes p+1})^{\cR(G_2)}, \\
&\mathcal{I}(\alpha \otimes F^1 \otimes \cdots \otimes F^p)= \\
&\alpha_{<0>} \otimes F^1_{\ (1)} \otimes S(F^1_{\ (2)} ) F^2_{\ (1)} \otimes \dots  \otimes S(F^{p-1}_{\ (2)} ) F^p_{\ (1)} \otimes S(F^p_{\ (2)}) \alpha_{<-1>} ;
\end{aligned}
\end{align}
and 
\begin{align}
\begin{aligned}
&\mathcal{I}^{-1}:  (\wedge^{q} \Fg_1^{\ast}\otimes \cR(G_2)^{\otimes p+1})^{\cR(G_2)} \to \wedge^{q} \Fg_1^{\ast}\otimes \cR(G_2)^{\otimes p}, \\
&\mathcal{I}^{-1}(\alpha \otimes F^0 \otimes \cdots \otimes F^p)= \\
&\alpha  \otimes F^0_{\ (1)} \otimes F^0_{\ (2)}  F^1_{\ (1)} \otimes F^0_{\ (3)}  F^1_{\ (2)} F^2_{\ (1)} \otimes \dots  \otimes F^{0}_{\ (p)} \cdots  F^{p-2}_{\ (2)}  F^{p-1} \varepsilon (F^{p}).
\end{aligned}
\end{align}

\begin{equation}
\begin{tikzpicture}[description/.style={fill=white,inner sep=2pt}]
\matrix (m) [matrix of math nodes, row sep=3em, column sep=3em, 
text height=1.5ex, text depth=0.25ex]
{   \vdots & \vdots & \vdots    \\
     \wedge^{2} \Fg_1^{\ast} & (\wedge^{2} \Fg_1^{\ast}\otimes \cR^{\otimes 2})^{\cR} &  (\wedge^{2} \Fg_1^{\ast}\otimes \cR^{\otimes 3})^{\cR} &  \cdots   \\ 
     \Fg_1^{\ast} & (\wedge^{1} \Fg_1^{\ast}\otimes \cR^{\otimes 2})^{\cR} &  (\wedge^{1} \Fg_1^{\ast}\otimes \cR^{\otimes 3})^{\cR} &  \cdots   \\ 
    \bC & ( \bC \otimes \cR^{\otimes 2})^{\cR} &  ( \bC \otimes \cR^{\otimes 3})^{\cR} &  \cdots   \\  };
  
\path[transform canvas={yshift=0.6ex},->,font=\scriptsize]
(m-2-1) edge node[above] {$ b_{\cR} $} (m-2-2)  (m-2-2) edge node[above] {$ b_{\cR} $} (m-2-3) (m-2-3) edge node[above] {$ b_{\cR} $} (m-2-4)  
(m-3-1) edge node[above] {$ b_{\cR} $} (m-3-2)  (m-3-2) edge node[above] {$ b_{\cR} $} (m-3-3) (m-3-3) edge node[above] {$ b_{\cR} $} (m-3-4)  
(m-4-1) edge  node[above] {$ b_{\cR} $} (m-4-2)  (m-4-2) edge node[above] {$ b_{\cR} $} (m-4-3) (m-4-3) edge node[above] {$ b_{\cR} $} (m-4-4)    ;

\path[transform canvas={xshift=0ex},->,font=\scriptsize]
(m-4-1) edge node[left]{$\partial_{\Fg}$} (m-3-1)  (m-4-2) edge node[left]{$\partial_{\Fg}$} (m-3-2)   (m-4-3) edge node[left]{$\partial_{\Fg}$} (m-3-3)  
(m-3-1) edge node[left]{$\partial_{\Fg}$} (m-2-1)  (m-3-2) edge node[left]{$\partial_{\Fg}$} (m-2-2)   (m-3-3) edge node[left]{$\partial_{\Fg}$} (m-2-3) 
(m-2-1) edge node[left]{$\partial_{\Fg}$} (m-1-1)  (m-2-2) edge node[left]{$\partial_{\Fg}$} (m-1-2)   (m-2-3) edge node[left]{$\partial_{\Fg}$} (m-1-3) ;

\path[transform canvas={yshift=-0.6ex},->,font=\scriptsize]
(m-2-2) edge node[below] {$ B_{\cR} $} (m-2-1)  (m-2-3) edge node[below] {$ B_{\cR} $} (m-2-2) (m-2-4) edge node[below] {$ B_{\cR} $} (m-2-3)  
(m-3-2) edge node[below] {$ B_{\cR} $} (m-3-1)  (m-3-3) edge node[below] {$ B_{\cR} $} (m-3-2) (m-3-4) edge node[below] {$ B_{\cR} $} (m-3-3)  
(m-4-2) edge node[below] {$ B_{\cR} $} (m-4-1)  (m-4-3) edge node[below] {$ B_{\cR} $} (m-4-2) (m-4-4) edge node[below] {$ B_{\cR} $} (m-4-3)    ;

\end{tikzpicture} 
\end{equation}

We transform these boundary operators via $\mathcal{I}$ and get 

the coboundary $\partial_{\Fg_1}$ is the Lie algebra cohomology coboundary of $\Fg_1$ with coefficients in $ \cR ^{\otimes \bullet}$ with right $\Fg_1$ action now given by the usual diagonal action:
\begin{align*}
&(1\otimes F^1 \otimes \cdots \otimes F^p) \tlt  Z\\
&= -1\otimes (\sum_{i=1}^{p}  F^1 \otimes \cdots \otimes Z \trt F^i \otimes \cdots \otimes F^p)
\end{align*}
while $b_{\cR}$ has a simple expression,
\begin{align}
\begin{aligned}
 b_{\cR} ( \alpha \otimes F^0 \otimes \dots \otimes F^p)= \sum_{i=0}^{p+1} (-1)^i \alpha \otimes F^0 \otimes \dots \otimes F^{i-1} \otimes 1 \otimes F^{i} \otimes
\dots \otimes F^{p}.
\end{aligned}
\end{align}
$B_{\cR}$ also has a simple expression,
\begin{align}
\begin{aligned}
 B_{\cR} =(\sum_{i=0}^{p-1}(-1)^{(p-1)i}\tau_{\cR}^{i})\sigma_{\cR}\tau_{\cR} (1-(-1)^{p} \tau_{\cR}) ,
\end{aligned}
\end{align}
with
\begin{align}
\begin{aligned}
& \tau_{\cR}( \alpha \otimes F^0 \otimes \cdots \otimes F^p)=\alpha \otimes F^1 \otimes \cdots \otimes F^p \otimes F^0\\
& \sigma_{\cR}( \alpha \otimes F^0 \otimes \cdots \otimes F^p)=\alpha \otimes F^0 \otimes \cdots \otimes F^{p-1}F^p.
\end{aligned}
\end{align}
\newline
step 3
\newline
Usually we will look at the complex of totally antisymmetric cochains $C^{p,q}_{\cR}(\wedge \Fg_1^{\ast},\wedge \cR(G_2)):=(\wedge^{q} \Fg_1^{\ast}\otimes \wedge^{p+1} \cR(G_2))^{\cR(G_2)}$\\
It is quasi-isomorphic to $C^{p,q}_{\cR}(\wedge \Fg_1^{\ast},\otimes \cR(G_2))$ via the antisymmetrization map $\alpha_{\cR}$
\begin{align*}
\alpha_{\cR} (\alpha \otimes F^0 \wedge \cdots \wedge F^p)= 
\frac{1}{(p+1)!} \sum_{\sigma \in S_{p+1}} (-1)^{\sigma} \alpha \otimes F^{\sigma(0)} \otimes \cdots \otimes F^{\sigma(p)}.
\end{align*}
The quasi-isomorphism is the same as in the case of group cohomology. We refer to~\cite[Appendix~A]{yamagami_polygonal_2002}.
\begin{equation*}
\begin{tikzpicture}[description/.style={fill=white,inner sep=2pt}]
\matrix (m) [matrix of math nodes, row sep=3em, column sep=3em, 
text height=1.5ex, text depth=0.25ex]
{   \vdots & \vdots & \vdots    \\
     \wedge^{2} \Fg_1^{\ast} & (\wedge^{2} \Fg_1^{\ast}\otimes \wedge^{2} \cR)^{\cR} &  (\wedge^{2} \Fg_1^{\ast}\otimes \wedge^{3} \cR)^{\cR} &  \cdots   \\ 
     \Fg_1^{\ast} & (\wedge^{1} \Fg_1^{\ast}\otimes \wedge^{2} \cR)^{\cR} &  (\wedge^{1} \Fg_1^{\ast}\otimes \wedge^{3} \cR)^{\cR} &  \cdots   \\ 
    \bC & (\bC \otimes \wedge^{2} \cR)^{\cR} &  (\bC \otimes \wedge^{3} \cR)^{\cR} &  \cdots   \\  };
  
\path[transform canvas={yshift=0.6ex},->,font=\scriptsize]
(m-2-1) edge node[above] {$ b_{\cR} $} (m-2-2)  (m-2-2) edge node[above] {$ b_{\cR} $} (m-2-3) (m-2-3) edge node[above] {$ b_{\cR} $} (m-2-4)  
(m-3-1) edge node[above] {$ b_{\cR} $} (m-3-2)  (m-3-2) edge node[above] {$ b_{\cR} $} (m-3-3) (m-3-3) edge node[above] {$ b_{\cR} $} (m-3-4)  
(m-4-1) edge  node[above] {$ b_{\cR} $} (m-4-2)  (m-4-2) edge node[above] {$ b_{\cR} $} (m-4-3) (m-4-3) edge node[above] {$ b_{\cR} $} (m-4-4)    ;

\path[transform canvas={xshift=0ex},->,font=\scriptsize]
(m-4-1) edge node[left]{$\partial_{\Fg}$} (m-3-1)  (m-4-2) edge node[left]{$\partial_{\Fg}$} (m-3-2)   (m-4-3) edge node[left]{$\partial_{\Fg}$} (m-3-3)  
(m-3-1) edge node[left]{$\partial_{\Fg}$} (m-2-1)  (m-3-2) edge node[left]{$\partial_{\Fg}$} (m-2-2)   (m-3-3) edge node[left]{$\partial_{\Fg}$} (m-2-3) 
(m-2-1) edge node[left]{$\partial_{\Fg}$} (m-1-1)  (m-2-2) edge node[left]{$\partial_{\Fg}$} (m-1-2)   (m-2-3) edge node[left]{$\partial_{\Fg}$} (m-1-3) ;

\end{tikzpicture} 
\end{equation*}

The boundary operators have simple expression when restricted to such complex.
\begin{align}
\begin{aligned}
&b_{\cR} ( \alpha \otimes F^0 \wedge \dots \wedge F^p)=  \alpha \otimes 1 \wedge F^0 \wedge \dots \wedge F^p, \\
&\partial_{\Fg} (\alpha \otimes F^0 \wedge \dots \wedge F^p)=\partial \alpha \otimes F^0 \wedge \dots \wedge F^p \\
&\qquad \qquad \qquad  \qquad  \qquad -\sum_{i,j}^{}\omega^i \wedge \alpha \otimes F^0 \wedge \dots \wedge Z_i \trt F^j  \wedge \dots \wedge F^p,
\end{aligned}
\end{align}
and because 
\begin{align*}
\tau_{\cR}( \alpha \otimes F^0 \wedge \dots \wedge F^p)= \alpha \otimes F^1 \wedge \dots \wedge F^p \wedge F^0=(-1)^{p} \alpha \otimes F^0 \wedge \dots \wedge F^p,
\end{align*}
we have 
\begin{equation*}
(1-(-1)^{p} \tau_{\cR})( \alpha \otimes F^0 \wedge \dots \wedge F^p)= 0, 
\end{equation*}
therefore
\begin{equation*}
B_{\cR}( \alpha \otimes F^0 \wedge \dots \wedge F^p)= 0.
\end{equation*}

We notice that $C^{p}_{\cR}(G_2,\wedge ^{q} \Fg_1^{\ast})$ is quasi-isomorphic to $C^{p,q}_{\cR}(\wedge \Fg_1^{\ast},\wedge \cR(G_2))$ via the map:
\begin{equation*}
\begin{aligned}
\cJ (\alpha \otimes F^0 \wedge \cdots \wedge F^p) (\psi_0, \dots, \psi_p)= 
\frac{1}{(p+1)!} \sum_{\sigma \in S_{p+1}} (-1)^{\sigma} \alpha  F^{\sigma(0)} (\psi_0) \cdots F^{\sigma(p)} (\psi_p).
\end{aligned}
\end{equation*}

\begin{equation}
\begin{tikzpicture}[description/.style={fill=white,inner sep=2pt}]
\matrix (m) [matrix of math nodes, row sep=3em, column sep=3em, 
text height=1.5ex, text depth=0.25ex]
{   \vdots & \vdots & \vdots    \\
     \wedge^{2} \Fg_1^{\ast} & C^{1}_{\cR}(G_2,\wedge ^{2} \Fg_1^{\ast}) &  C^{2}_{\cR}(G_2,\wedge ^{2} \Fg_1^{\ast}) &  \cdots   \\ 
     \Fg_1^{\ast} & C^{1}_{\cR}(G_2,\wedge ^{1} \Fg_1^{\ast}) &  C^{2}_{\cR}(G_2,\wedge ^{1} \Fg_1^{\ast}) &  \cdots   \\ 
    \bC & C^{1}_{\cR}(G_2,\bC) &  C^{2}_{\cR}(G_2,\bC) &  \cdots   \\  };
  
\path[transform canvas={yshift=0.6ex},->,font=\scriptsize]
(m-2-1) edge node[above] {$ b_{\cR} $} (m-2-2)  (m-2-2) edge node[above] {$ b_{\cR} $} (m-2-3) (m-2-3) edge node[above] {$ b_{\cR} $} (m-2-4)  
(m-3-1) edge node[above] {$ b_{\cR} $} (m-3-2)  (m-3-2) edge node[above] {$ b_{\cR} $} (m-3-3) (m-3-3) edge node[above] {$ b_{\cR} $} (m-3-4)  
(m-4-1) edge  node[above] {$ b_{\cR} $} (m-4-2)  (m-4-2) edge node[above] {$ b_{\cR} $} (m-4-3) (m-4-3) edge node[above] {$ b_{\cR} $} (m-4-4)    ;

\path[transform canvas={xshift=0ex},->,font=\scriptsize]
(m-4-1) edge node[left]{$\partial_{\Fg}$} (m-3-1)  (m-4-2) edge node[left]{$\partial_{\Fg}$} (m-3-2)   (m-4-3) edge node[left]{$\partial_{\Fg}$} (m-3-3)  
(m-3-1) edge node[left]{$\partial_{\Fg}$} (m-2-1)  (m-3-2) edge node[left]{$\partial_{\Fg}$} (m-2-2)   (m-3-3) edge node[left]{$\partial_{\Fg}$} (m-2-3) 
(m-2-1) edge node[left]{$\partial_{\Fg}$} (m-1-1)  (m-2-2) edge node[left]{$\partial_{\Fg}$} (m-1-2)   (m-2-3) edge node[left]{$\partial_{\Fg}$} (m-1-3) ;

\end{tikzpicture} 
\end{equation}

The boundary operators are
\begin{align*}
& b_{\cR} (c)( \psi_0, \dots, \psi_{p+1})= \sum_{i=0}^{p+1}(-1)^i c (\psi_0, \dots, \hat{\psi_{i}}, \dots, \psi_{p+1}) \\
&\partial_{\Fg} (c)(\psi_0, \dots, \psi_{p})= \partial (c( \psi_0, \dots, \psi_{p})) - \sum_{i}^{}\omega^i \wedge ( Z_i \trt c ) (\psi_0, \dots, \psi_{p}).
\end{align*}
\newline
step 4
\newline
we define a map $ \Theta:C^{p}_{\cR}(G_2,\wedge ^{q} \Fg_1^{\ast}) \to D^{p,q}$ 
\begin{align}\label{thetamap}
\Theta(\alpha)(\psi_0,\dots, \psi_p) \vert _{\vp} =L_{\vp}^{\ast}(\alpha) (\psi_0 \tlt \vp,\dots,\psi_{p}\tlt \vp)
\end{align}

\begin{Proposition}
$\Theta$ is an isomorphism between complexes.
\end{Proposition}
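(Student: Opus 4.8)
\emph{Proof plan.} The plan is to exhibit an explicit two‑sided inverse of $\Theta$, namely the restriction of a cochain to the fibre over $e\in G_1$, and then to check that $\Theta$ really lands in $D^{p,q}$ and intertwines the two pairs of coboundary operators.

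First I would write a general element of $C^{p}_{\cR}(G_2,\wedge^{q}\Fg_1^{\ast})$ as a finite sum $\alpha(\eta_0,\dots,\eta_p)=\sum_{j}\bigl(\prod_{i=0}^{p}F_{i,j}(\eta_i)\bigr)\,\omega_j$ with $F_{i,j}\in\cR(G_2)$ and $\omega_j\in\wedge^{q}\Fg_1^{\ast}$ independent of the $\eta_i$. Directly from \eqref{thetamap}, and using that $L_{\vp}^{\ast}$ fixes every left $G_1$‑invariant form,
\[
\Theta(\alpha)(\psi_0,\dots,\psi_p)\big|_{\vp}=\sum_{j}\Bigl(\prod_{i=0}^{p}F_{i,j}(\psi_i\tlt\vp)\Bigr)\,\widetilde{\omega}_j\big|_{\vp},
\]
where $\widetilde{\omega}_j$ denotes the left‑invariant $q$‑form extending $\omega_j$. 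This is exactly the shape demanded of members of $D^{p,q}$; total antisymmetry in the $\psi_i$ and $\cR$‑coinvariance are inherited from $\alpha$, while the $G$‑equivariance of $\Theta(\alpha)$ follows from the action identity $\psi_i\tlt(\vp'\vp)=(\psi_i\tlt\vp')\tlt\vp$, the matched‑pair cocycle relation governing $\psi_i\tlt\vp$ under left translation in $G_2$, and the left‑invariance of the $\widetilde{\omega}_j$. Conversely, since $\tlt$ is a right action one has $\psi\tlt e=\psi$ and $L_e=\id$, so the assignment $\Xi(\beta)(\psi_0,\dots,\psi_p):=\beta(\psi_0,\dots,\psi_p)\big|_{e}\in\wedge^{q}T_e^{\ast}G_1=\wedge^{q}\Fg_1^{\ast}$ gives a well‑defined map $D^{p,q}\to C^{p}_{\cR}(G_2,\wedge^{q}\Fg_1^{\ast})$, and $\Xi\circ\Theta=\id$ is immediate from \eqref{thetamap}. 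That $\Theta\circ\Xi=\id$ is precisely the $G$‑equivariance of a $\beta\in D^{p,q}$ specialised to $\phi=\vp\in G_1$: evaluating $\beta(\psi_0\tlt\vp,\dots,\psi_p\tlt\vp)=(\vp^{-1}\trt)^{\ast}\beta(\psi_0,\dots,\psi_p)$ at $e$ and pulling back by $L_{\vp}$ returns $\beta(\psi_0,\dots,\psi_p)\big|_{\vp}$. Hence $\Theta$ is bijective.

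Next I would verify that $\Theta$ is a chain map, i.e.\ $\Theta\circ b_{\cR}=d_1\circ\Theta$ and $\Theta\circ\partial_{\Fg}=d_2\circ\Theta$. The first is formal: $b_{\cR}$ and $d_1$ are both the alternating sum over deletions of one of the group arguments, and the only discrepancy is the sign factor $(-1)^{q}$ built into $d_1$; since $\Theta$ does not mix the group variables with the form, this is a line‑by‑line comparison. For the second, fix $\psi_0,\dots,\psi_p$, put $\omega:=\Theta(c)(\psi_0,\dots,\psi_p)=\sum_j g_j\,\widetilde{\omega}_j$ with $g_j(\vp)=\prod_{i}F_{i,j}(\psi_i\tlt\vp)$ and $\widetilde{\omega}_j$ left‑invariant, and use $d\omega=\sum_j dg_j\wedge\widetilde{\omega}_j+\sum_j g_j\,d\widetilde{\omega}_j$ on $G_1$. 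The summand $\sum_j g_j\,d\widetilde{\omega}_j$ reproduces, fibrewise, the Chevalley--Eilenberg term $\partial\alpha$ of $\partial_{\Fg}$ via the Maurer--Cartan equations of $G_1$; for the summand $\sum_j dg_j\wedge\widetilde{\omega}_j$, expand $dg_j=\sum_i\widetilde{Z}_i(g_j)\,\widetilde{\omega}^{i}$ in the left‑invariant coframe and invoke the key identity, valid for every $F\in\cR(G_2)$ because $\tlt$ is a right action and by \eqref{leftaction},
\[
\widetilde{Z}_i\bigl(\vp\mapsto F(\psi\tlt\vp)\bigr)\big|_{\vp_0}
=\dt\,F\bigl(\psi\tlt(\vp_0\exp tZ_i)\bigr)
=\dt\,F\bigl((\psi\tlt\vp_0)\tlt\exp tZ_i\bigr)
=\bigl(Z_i\trt F\bigr)(\psi\tlt\vp_0),
\]
together with the Leibniz rule, to identify $\sum_j dg_j\wedge\widetilde{\omega}_j$ with $\Theta$ of the twisting term $-\sum_i\omega^{i}\wedge(Z_i\trt c)$ of $\partial_{\Fg}$. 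Adding the two contributions gives $d_2\Theta(c)=\Theta(\partial_{\Fg}c)$, as needed; the $B$‑type operators vanish on both the antisymmetric $\cR$‑complex and on $D^{p,q}$, so nothing further must be matched.

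The main obstacle is bookkeeping rather than conceptual. On the one hand one must pin down the sign and orientation conventions so that de Rham $d$ on $G_1$, expressed through the left‑invariant coframe, matches the signed Chevalley--Eilenberg differential $\partial_{\Fg}$ with its diagonal‑action twisting term exactly. On the other hand, checking the $G$‑equivariance of $\Theta(\alpha)$ requires a careful handling of the extended actions $\psi_i\tlt\phi$ and $\phi\trt\vp$ for $\phi\in G=G_1G_2$; both of these reduce, after some manipulation, to the matched‑pair axioms, after which bijectivity of $\Theta$ and its commutation with the coboundaries fall out as above.
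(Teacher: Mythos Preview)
Your proof is correct and follows the same idea as the paper's: the paper's argument is the single sentence ``use the strong covariance property of $D^{p,q}$, cochains in $D^{p,q}$ depend only on their value at $e_1 \in G_1$,'' which is exactly your construction of the inverse $\Xi$ by restriction to $e$. Your treatment is considerably more thorough---you also verify that $\Theta$ lands in $D^{p,q}$, write out both compositions $\Xi\circ\Theta$ and $\Theta\circ\Xi$, and check compatibility with the coboundaries---none of which the paper spells out.
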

\begin{proof}
use the strong covariance property of $D^{p,q}$, cochains in $D^{p,q}$ depend only on its value at $e_1 \in G_1$.
\end{proof}

\subsection{Construction from Lie algebra cohomology complex to \texorpdfstring{$D^{p,q}$}{Dpq}}

In this section, we would like to carry out a parallel construction of a chain map from Lie algebra cohomology complex of the pair $(\Fg_1\bowtie \Fg_2 , \Fh_2)$ to the complex $D^{p,q}$, comparing with the infinite dimensional case as in \cite[2.1]{moscovici_hopf_2011}, and~\cite{connes_hopf_1998}.
\newline
Assume $H_2$ is invariant under $G_1$ action, we can have $H_2 \backslash(G_1\bowtie G_2)  \cong G_1 \bowtie (H_2 \backslash G_2)$ and $\Fh_2  \backslash (\Fg_1\bowtie \Fg_2 ) \cong \Fg_1 \bowtie ( \Fh_2 \backslash\Fg_2  )$. We can also write down right $\Fh_2$-action on $\Fh_2  \backslash (\Fg_1\bowtie \Fg_2 ) \cong \Fg_1 \bowtie ( \Fh_2 \backslash\Fg_2  )$ by the induced adjoint action $(-\trt) \oplus ad$. In order to have a linear action we need to assume that the action $\trt$ of $\Fh_2$ on $\Fg_1$ is given by derivations, this is the case when our matched pair of Lie groups are from decomposition of Lie group. Under above assumptions we can talk about $\Fh_{2}-$basic forms on the bicrossed product group $G_1 \bowtie G_2$.

For $\psi_0,\dots,\psi_{p} \in G_2$, we let $\Delta(\psi_0,\dots,\psi_{p})$ be the affine simplex with vertices $\pi_{L}(\psi_i)$ in the affine coordinates on $L_2$. i.e., 
\begin{align*}
\Delta(\psi_0,\dots,\psi_{p})=\exp \Big(\sum_{0}^{p} t_i \log \big(\pi_{L}(\psi_i)\big)\Big); \qquad 0 \le t_i \le 1,\sum_{0}^{p} t_i=1 
\end{align*}
where $\pi_L$ is the projection $G_2 \to H_2 \backslash G_2$.
We check that the right multiplication of $G_2$ is affine on the exponential coordinates of $L_2$, i.e., assume $\psi=hl$,
\begin{align}
\begin{aligned}
\Delta(\psi_0  \psi,\dots,\psi_{p}  \psi)=&\exp \Big(\sum_{0}^{p} t_i \log \big(\pi_{L}(\psi_i  \psi)\big)\Big)\\
=&\exp \Big(\sum_{0}^{p} t_i \log \big((\pi_{L}(\psi_i) \tlt h  )l)\big)\Big)\\
=&(  \Delta(\psi_0,\dots,\psi_{p})  \tlt h) l  
\end{aligned}
\end{align}
where we identify $L_2$ with $H_2 \backslash G_2$ to make sense of the right $G_2$ action.

We assume that the right $G_1$ action is affine on the exponential coordinates, i.e.,
\begin{align}
\begin{aligned}
\Delta(\psi_0 \tlt \vp,\dots,\psi_{p} \tlt \vp)=&\exp \Big(\sum_{0}^{p} t_i \log \big(\pi_{L}(\psi_i \tlt \vp)\big)\Big)\\
=&\exp \Big(\sum_{0}^{p} t_i \log \big(\pi_{L}(\psi_i)\tlt \vp)\big)\Big)\\
=&  \Delta(\psi_0,\dots,\psi_{p}) \tlt \vp   
\end{aligned}
\end{align}

We define $\mathcal{D}$ as discussed in~\cite[2.1]{moscovici_hopf_2011} and~\cite[p229]{connes_hopf_1998}.

\begin{align}
&\mathcal{D} :(\wedge^{n} (\Fg_{1} \oplus \Fl_{2})^{\ast})^{\Fh_{2}}  \to C^{p,q} \\
&\langle \mathcal{D} (\omega)(\psi_0,\dots,\psi_{p}), \eta \rangle = (-1)^{\frac{q(q+1)}{2}}  \int_{(G_1 \times \Delta (\psi_0,\dots,\psi_{p}))^{-1}}  \pi_1^{\ast} \eta \wedge \tilde{\omega}
\end{align}

\begin{Lemma}
The group cochain $\mathcal{D} (\omega)$ satisfies the strong covariance property:
\begin{align}
\begin{aligned}
\mathcal{D} (\omega) (\psi_0 \tlt \phi,\dots,\psi_{p} \tlt \phi ) =(\phi^{-1} \trt )^{\ast} \mathcal{D} (\omega)
\end{aligned}
\end{align}
where $\phi=\vp\psi \in G_1G_2=G$.
\end{Lemma}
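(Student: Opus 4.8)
The plan is to read the covariance law directly off the integral formula defining $\mathcal{D}(\omega)$. Write $\phi=\vp\psi$ with $\vp\in G_1$, $\psi=hl\in H_2L_2=G_2$. By the matched-pair axioms, $\phi\mapsto(\psi_0\tlt\phi,\dots,\psi_p\tlt\phi)$ is a genuine right action of $G=G_1\bowtie G_2$ extending both the $G_1$-action $\tlt$ and right multiplication by $G_2$, and dually $\phi\mapsto(\phi\trt)$ is a left action of $G$ on $G_1$ extending the $G_2$-action $\trt$ and left multiplication by $G_1$. The first step is to record the effect on the affine simplex: feeding $\psi_i\tlt\phi=(\psi_i\tlt\vp)\tlt\psi$ into the two displayed computations preceding the statement (affineness of right $G_2$-translation, then of right $G_1$-translation, on the exponential coordinates of $L_2$) gives
\begin{align}
\Delta(\psi_0\tlt\phi,\dots,\psi_p\tlt\phi)=\Delta(\psi_0,\dots,\psi_p)\tlt\phi ,
\end{align}
where on the right $\tlt\phi$ denotes the induced right $G$-action on $L_2\cong H_2\backslash G_2$.

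Next I would pair $\mathcal{D}(\omega)(\psi_0\tlt\phi,\dots,\psi_p\tlt\phi)$ against a compactly supported test form $\eta$ on $G_1$ and carry out the change of variables in the domain of integration in two moves. First, I would replace $\big(G_1\times\Delta(\psi_0\tlt\phi,\dots)\big)^{-1}$ by $\phi^{-1}\cdot\big(G_1\times\Delta(\psi_0,\dots)\big)^{-1}$: using $G=G_1G_2$ and $\delta\vp=(\delta\trt\vp)(\delta\tlt\vp)$ one checks that these two submanifolds of $G$ have the same image in $G/H_2$, and that the integrand $\pi_1^{\ast}\eta\wedge\widetilde\omega$ is $\Fh_2$-basic — $\pi_1$ is constant on right $H_2$-orbits and $\widetilde\omega$ is pulled back from $G/H_2$ because $\omega\in(\wedge^{\bullet}(\Fg_1\oplus\Fl_2)^{\ast})^{\Fh_2}$ — so both integrals equal the integral of the descended form over the common image and hence agree. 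It is here that the running hypotheses enter: that $H_2$ is stable under the $G_1$-action (so $\Fg_1\oplus\Fl_2$ is an $\Fh_2$-invariant complement of $\Fh_2$ in $\Fg$ and $G_1L_2$ is a subgroup), that $\Fh_2$ acts on $\Fg_1$ by derivations, and that $L_2$ is simply connected nilpotent (so $\exp$ is a global diffeomorphism and the simplices and the relevant translations are affine). Second, I would translate the domain back by left multiplication $L_\phi$: since $\widetilde\omega$ is left invariant, $L_\phi^{\ast}\widetilde\omega=\widetilde\omega$, while from $\pi_1(\phi g)=\vp\,(\psi\trt\pi_1(g))$ we get $\pi_1\circ L_\phi=(\phi\trt)\circ\pi_1$, hence $L_\phi^{\ast}\pi_1^{\ast}\eta=\pi_1^{\ast}\big((\phi\trt)^{\ast}\eta\big)$. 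Combining,
\begin{align}
\big\langle\mathcal{D}(\omega)(\psi_0\tlt\phi,\dots,\psi_p\tlt\phi),\eta\big\rangle=\big\langle\mathcal{D}(\omega)(\psi_0,\dots,\psi_p),(\phi\trt)^{\ast}\eta\big\rangle ,
\end{align}
and since the pairing is $\langle\alpha,\eta\rangle=\int_{G_1}\alpha\wedge\eta$ and $(\phi\trt)^{-1}=\phi^{-1}\trt$, this is exactly $\mathcal{D}(\omega)(\psi_0\tlt\phi,\dots,\psi_p\tlt\phi)=(\phi^{-1}\trt)^{\ast}\mathcal{D}(\omega)(\psi_0,\dots,\psi_p)$.

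I expect the sign and orientation bookkeeping to be routine — the factor $(-1)^{q(q+1)/2}$ is untouched, and left and right translations of the connected group $G$ preserve orientation — as are the matched-pair manipulations. The genuine obstacle is the first move above: making precise the claim that $\big(G_1\times(\Delta\tlt\phi)\big)^{-1}$ and $\phi^{-1}\cdot(G_1\times\Delta)^{-1}$ project to one and the same submanifold of $G/H_2$ on which $\pi_1$ restricts to a diffeomorphism, so that integrating the $\Fh_2$-basic form $\pi_1^{\ast}\eta\wedge\widetilde\omega$ over either one yields the same number. This is where all the structural assumptions on the decomposition $G_2=H_2\ltimes L_2$ are indispensable, and it is the analogue, in the present bicrossed-product setting, of the corresponding verification in \cite[p.~229]{connes_hopf_1998} and \cite[2.1]{moscovici_hopf_2011}.
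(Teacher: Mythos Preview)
Your proof is correct and follows essentially the same route as the paper's: pair against a test form, use the affineness assumptions to identify the translated simplex, rewrite the domain as a left translate by $\phi^{-1}$, and then use left invariance of $\widetilde\omega$ together with $\pi_1\circ L_\phi=(\phi\trt)\circ\pi_1$. The paper's proof is the same chain of equalities but compresses your ``first move'' into the single unjustified line $(G_1\times\Delta(\psi_0\tlt\phi,\dots))^{-1}=L_{\phi^{-1}}(G_1\times\Delta(\psi_0,\dots))^{-1}$; your observation that these two chains need only agree after projection to $G/H_2$, and that this suffices because $\pi_1^{\ast}\eta\wedge\widetilde\omega$ is $\Fh_2$-basic, is exactly the content hidden in that line and is the right way to make it honest.
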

\begin{proof}

\begin{align*}
\langle \mathcal{D} (\omega)(\psi_0\phi,\dots,\psi_{p}\phi), \eta \rangle &=\int_{(G_1 \times \Delta (\psi_0\phi,\dots,\psi_{p}\phi))^{-1}}  \pi_1^{\ast} \eta \wedge \tilde{\omega}\\
&=\int_{L_{\phi^{-1}}(G_1 \times \Delta (\psi_0,\dots,\psi_{p}))^{-1}}  \pi_1^{\ast} \eta \wedge \tilde{\omega}\\
&=\int_{(G_1 \times \Delta (\psi_0,\dots,\psi_{p}))^{-1}} (\widetilde{\phi^{}})^{\ast} (\pi_1^{\ast} \eta \wedge \tilde{\omega})\\
&=\int_{(G_1 \times \Delta (\psi_0,\dots,\psi_{p}))^{-1}}  \pi_1^{\ast} ((\phi \trt)^{\ast} \eta) \wedge \tilde{\omega}\\
&=\langle \mathcal{D} (\omega)(\psi_0,\dots,\psi_{p}), (\phi \trt)^{\ast} \eta \rangle \\
&=\langle (\phi^{-1} \trt)^{\ast} \mathcal{D} (\omega)(\psi_0,\dots,\psi_{p}),  \eta \rangle
\end{align*}

\end{proof}

\begin{Proposition} \label{Dchainmap}
$\mathcal{D} $ is a map of complexes
\end{Proposition}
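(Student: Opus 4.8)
The plan is to prove that $\mathcal{D}$ intertwines the (relative) Chevalley--Eilenberg differential $d_{\Fg}$ of the pair $(\Fg_1\bowtie\Fg_2,\Fh_2)$ with the total differential of the bicomplex $(C^{\bullet,\bullet},d_1,d_2)$, by the simplicial-integration/Stokes argument of \cite{connes_hopf_1998,moscovici_hopf_2011} transplanted to the present finite-dimensional bicrossed setting. First I would record the bigraded picture: writing $\wedge^{n}(\Fg_1\oplus\Fl_2)^{\ast}=\bigoplus_{p+q=n}\wedge^{q}\Fg_1^{\ast}\otimes\wedge^{p}\Fl_2^{\ast}$, the map $\mathcal{D}$ sends $\wedge^{q}\Fg_1^{\ast}\otimes\wedge^{p}\Fl_2^{\ast}$ into $C^{p,q}$; and under the standing assumptions (that $H_2$ is $G_1$-invariant and that $\Fh_2$ acts on $\Fg_1$ by derivations) $d_{\Fg}$ splits as $d_{\Fg}=d'+d''$, where $d'$ raises the $\Fg_1$-degree by one and $d''$ raises the $\Fl_2$-degree by one. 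Since $\tilde\omega$ is the invariant form on $G_1\bowtie G_2$ (descended along $H_2\backslash(G_1\bowtie G_2)$) attached to $\omega$, its de Rham differential corresponds to the Chevalley--Eilenberg one, i.e. $\widetilde{d_{\Fg}\,\omega}=d\tilde\omega$. What must be shown is thus $\mathcal{D}(d''\omega)=\pm\,d_1\mathcal{D}(\omega)$ and $\mathcal{D}(d'\omega)=\pm\,d_2\mathcal{D}(\omega)$.

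Next I would verify the $d_1$-compatibility. Pairing $\mathcal{D}(d''\omega)(\psi_0,\dots,\psi_{p+1})$ against a compactly supported form $\eta$ on $G_1$ of degree $\dim G_1-q$ gives, up to the prefactor $(-1)^{q(q+1)/2}$, the integral $\int_{(G_1\times\Delta(\psi_0,\dots,\psi_{p+1}))^{-1}}\pi_1^{\ast}\eta\wedge d\tilde\omega$, since among the bigraded pieces of $d\tilde\omega$ only the one of $\Fl_2$-degree $p+1$ and $\Fg_1$-degree $q$ can fill the top degree against $\pi_1^{\ast}\eta$. Expanding $d(\pi_1^{\ast}\eta\wedge\tilde\omega)=\pi_1^{\ast}d\eta\wedge\tilde\omega+(-1)^{\deg\eta}\pi_1^{\ast}\eta\wedge d\tilde\omega$, the $d\eta$-term vanishes because $\tilde\omega$ has $\Fl_2$-degree exactly $p$ and so cannot supply $p+1$ simplex directions; Stokes' theorem then turns the rest into a boundary integral over $(G_1\times\partial\Delta(\psi_0,\dots,\psi_{p+1}))^{-1}$ (the $G_1$-boundary being empty, and inversion commuting with $\partial$), and because the $\Delta$'s are affine simplices in the $\log$-coordinates on $L_2$ one has $\partial\Delta(\psi_0,\dots,\psi_{p+1})=\sum_{j}(-1)^{j}\Delta(\psi_0,\dots,\widehat{\psi_j},\dots,\psi_{p+1})$. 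Reassembling, this is $\pm\langle d_1\mathcal{D}(\omega)(\psi_0,\dots,\psi_{p+1}),\eta\rangle$, the $(-1)^{q+j}$ in $d_1$ together with $(-1)^{q(q+1)/2}$ being exactly what makes the sign close.

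For the $d_2$-compatibility I would run the analogue at level $p$ with a test form $\eta$ of degree $\dim G_1-q-1$: then $\mathcal{D}(d'\omega)(\psi_0,\dots,\psi_p)$ paired with $\eta$ equals, up to sign, $\int_{(G_1\times\Delta(\psi_0,\dots,\psi_p))^{-1}}\pi_1^{\ast}\eta\wedge d\tilde\omega$ (only the $\Fl_2$-degree $p$, $\Fg_1$-degree $q+1$ piece of $d\tilde\omega$ survives). Expanding $d(\pi_1^{\ast}\eta\wedge\tilde\omega)$ as above, the boundary term $\int_{(G_1\times\partial\Delta(\psi_0,\dots,\psi_p))^{-1}}\pi_1^{\ast}\eta\wedge\tilde\omega$ now vanishes because $\tilde\omega$, of $\Fl_2$-degree exactly $p$, restricts to $0$ on the $(p-1)$-dimensional faces of $\Delta$; what remains is $\pm\int_{(G_1\times\Delta(\psi_0,\dots,\psi_p))^{-1}}\pi_1^{\ast}d\eta\wedge\tilde\omega$, which by the definition of $\mathcal{D}(\omega)(\psi_0,\dots,\psi_p)$ and of the de Rham differential of a current on $G_1$ is $\pm\langle d_2\mathcal{D}(\omega)(\psi_0,\dots,\psi_p),\eta\rangle$; here the ratio of normalizing constants is $(-1)^{(q+1)(q+2)/2}/(-1)^{q(q+1)/2}=(-1)^{q+1}$, which is precisely the Koszul sign needed. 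Combining the two yields $\mathcal{D}\,d_{\Fg}=d_1\mathcal{D}\pm d_2\mathcal{D}$, so $\mathcal{D}$ is a morphism of complexes.

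I expect the only genuinely delicate point to be the sign and orientation bookkeeping: fixing orientation conventions on the affine simplices and their faces so that the face formula for $\partial\Delta$ holds on the nose, tracking the effect of the inversion map $(\,\cdot\,)^{-1}$, the Koszul signs from moving $d$ past $\pi_1^{\ast}\eta$, and matching all of this against the prefactor $(-1)^{q(q+1)/2}$ and the $(-1)^{q+j}$ in $d_1$. Everything else is already secured: the affine simplices are well defined because $L_2=\exp\Fl_2$ is simply connected and solvable, so $\log$ furnishes global affine coordinates; $\mathcal{D}(\omega)$ lands in $C^{p,q}$ by construction; and its $G$-covariance is the Lemma just proved.
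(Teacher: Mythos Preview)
Your proposal is correct and follows essentially the same Stokes-based argument as the paper: expand $d(\pi_1^{\ast}\eta\wedge\tilde\omega)$ by the Leibniz rule, apply Stokes over $(G_1\times\Delta)^{-1}$ so that the boundary of the affine simplex produces the alternating sum defining $d_1$, and identify the remaining $\pi_1^{\ast}d\eta\wedge\tilde\omega$ term with $d_2\mathcal{D}(\omega)$ via the current pairing. The only organizational difference is that you first decompose $\omega$ by bidegree and treat $d'$ and $d''$ separately, using explicit degree-counting to kill the unwanted terms, whereas the paper runs a single calculation on the full $d\omega$ and lets the dimension of the integration domain pick out the correct bigraded pieces implicitly; the content is the same, and your acknowledged sign bookkeeping is exactly what the paper's prefactors $(-1)^{q(q+1)/2}$ and $(-1)^{q+j}$ resolve.
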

\begin{proof}
The covariance property is checked above. Now we check 
\begin{align}
\begin{aligned}
\mathcal{D} (d\omega) =(d_1+d_2) \mathcal{D} (\omega)
\end{aligned}
\end{align}
we have
\begin{align}
\begin{aligned}
&\langle \mathcal{D} (d\omega)(\psi_0,\dots,\psi_{p}), \eta \rangle \\
=& (-1)^{\frac{q(q+1)}{2}}  \int_{(G_1 \times \Delta (\psi_0,\dots,\psi_{p}))^{-1}}  \pi_1^{\ast} \eta \wedge d\tilde{\omega} \\
=& (-1)^{\frac{q(q+1)}{2}}   (-1)^{q} ( \int_{(G_1 \times \Delta (\psi_0,\dots,\psi_{p}))^{-1}}  d(\pi_1^{\ast} \eta \wedge \tilde{\omega}  )      \\
&  - \int_{(G_1 \times \Delta (\psi_0,\dots,\psi_{p}))^{-1}}  \pi_1^{\ast} (d\eta) \wedge  \tilde{\omega}      ) \\
=&  (-1)^{q} (-1)^{\frac{q(q+1)}{2}}  ( \int_{(G_1 \times \Delta (\psi_0,\dots,\psi_{p}))^{-1}}  d(\pi_1^{\ast} \eta \wedge \tilde{\omega}  )  )     \\
& +(-1)^{\frac{(q+1)(q+2)}{2}} ( \int_{(G_1 \times \Delta (\psi_0,\dots,\psi_{p}))^{-1}}  \pi_1^{\ast} (d\eta) \wedge  \tilde{\omega}      ) \\
=&  (-1)^{q} (-1)^{\frac{q(q+1)}{2}}  ( \int_{(G_1 \times \Delta (\psi_0,\dots,\psi_{p}))^{-1}}  d(\pi_1^{\ast} \eta \wedge \tilde{\omega}  )  )     + \langle d_2 \mathcal{D} (\omega), \eta \rangle.
\end{aligned}
\end{align}
Apply Stokes, the first integral becomes the group coboundary:
\begin{align}
\begin{aligned}
 &(-1)^{q} (-1)^{\frac{q(q+1)}{2}}  ( \int_{(G_1 \times \Delta (\psi_0,\dots,\psi_{p}))^{-1}}  d(\pi_1^{\ast} \eta \wedge \tilde{\omega}  )  ) \\
 =& (-1)^{q} (-1)^{\frac{q(q+1)}{2}} (\sum_{i=0}^{p+1} (-1)^{i}    \int_{(G_1 \times \Delta (\psi_0,\dots,\hat{\psi_{i}},\dots, \psi_{p}))^{-1}}  \pi_1^{\ast} \eta \wedge \tilde{\omega}   ) \\
 =&\sum_{i=0}^{p+1} (-1)^{q+i}  \langle \mathcal{D} (\omega)(\psi_0,\dots,\hat{\psi_{i}},\dots, \psi_{p}), \eta \rangle \\
 =&\langle d_1 \mathcal{D} (\omega) ,\eta \rangle.
\end{aligned}
\end{align}

\end{proof}

Because of the strong covariance property, $D$ is completely determined by its value at $e_1 \in G_1$, consider group cochains with values in $\wedge ^{\bullet} \Fg_{1}^{\ast}$:
\begin{align}
\begin{aligned}
\mathcal{E}(\omega)(\psi_0,\dots,\psi_{p}):=\mathcal{D}(\omega)(\psi_0,\dots,\psi_{p}) \vert _{e_1}.
\end{aligned}
\end{align}
\begin{Corollary} \label{Echain1}
$\mathcal{E}$ is a chain map from $(\wedge^{n} (\Fg_{1} \oplus \Fl_{2})^{\ast})^{\Fh_{2}}$ to $C^{p}(G_2,\wedge ^{q} \Fg_1^{\ast}) $
\end{Corollary}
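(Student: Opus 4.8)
The plan is to recognize $\mathcal{E}$ as the composite $\Theta^{-1}\circ\mathcal{D}$ and to deduce the statement from two facts already in hand: $\mathcal{D}$ is a chain map into $\big(C^{p,q},d_1,d_2\big)$ by Proposition~\ref{Dchainmap}, and, as established above, $\Theta: C^{p}_{\cR}(G_2,\wedge^{q}\Fg_1^{\ast})\to D^{p,q}$ is an isomorphism of complexes. Evaluating the defining formula~\eqref{thetamap} at $\vp=e_1$ gives $\Theta(\alpha)(\psi_0,\dots,\psi_p)\vert_{e_1}=L_{e_1}^{\ast}(\alpha)(\psi_0\tlt e_1,\dots,\psi_p\tlt e_1)=\alpha(\psi_0,\dots,\psi_p)$, so restriction to $e_1$ is a left inverse of $\Theta$ and hence, $\Theta$ being bijective onto $D^{p,q}$, equals $\Theta^{-1}$ on $D^{p,q}$; in particular this restriction map is automatically a chain map. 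Thus, \emph{provided} $\mathcal{D}(\omega)$ lands in the subcomplex $D^{p,q}\subseteq C^{p,q}$, one has $\mathcal{E}(\omega)=\mathcal{D}(\omega)(\,\cdot\,)\vert_{e_1}=\Theta^{-1}\big(\mathcal{D}(\omega)\big)$, and $\mathcal{E}$ is a chain map as a composite of chain maps, with image inside $C^{p}_{\cR}(G_2,\wedge^{q}\Fg_1^{\ast})\subseteq C^{p}(G_2,\wedge^{q}\Fg_1^{\ast})$.

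So everything reduces to verifying the two defining conditions of $D^{p,q}$ for $\mathcal{D}(\omega)$. The $G$-equivariance $\mathcal{D}(\omega)(\psi_0\tlt\phi,\dots,\psi_p\tlt\phi)=(\phi^{-1}\trt)^{\ast}\mathcal{D}(\omega)(\psi_0,\dots,\psi_p)$ is exactly the strong covariance Lemma proved just above. For the coefficient condition I would use the assumed affineness of the right $G_1$-action on the exponential coordinates of the nucleus $L_2$, i.e. $\Delta(\psi_0\tlt\vp,\dots,\psi_p\tlt\vp)=\Delta(\psi_0,\dots,\psi_p)\tlt\vp$: the $\vp$-dependence of $\langle\mathcal{D}(\omega)(\psi_0,\dots,\psi_p)\vert_{\vp},\eta\rangle$ enters only through the vertices $\pi_L(\psi_i\tlt\vp)$ and through the evaluation of the $\Fh_2$-basic form $\tilde\omega$; since $\tilde\omega$ has exponential–polynomial coefficients in those coordinates and the simplex is affine in its vertices, integrating $\pi_1^{\ast}\eta\wedge\tilde\omega$ over $(G_1\times\Delta(\psi_0,\dots,\psi_p))^{-1}$ produces a form on $G_1$ whose coefficients are finite sums of finite products of functions $\vp\mapsto F_i(\psi_i\tlt\vp)$ with each $F_i$ polynomial on $L_2$, hence $F_i\in\cR(G_2)$. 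This matches the description of $D^{p,q}$, so $\mathcal{D}(\omega)\in D^{p,q}$ and the corollary follows.

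The step I expect to be delicate is precisely this coefficient-form verification: one must make sure that the simplicial integration does not smuggle in coefficients outside $\cR(G_2)$ — for instance from the auxiliary form $\eta$ on $G_1$ (which is integrated out) or from the exponential parametrization of $L_2$. An alternative, self-contained route that sidesteps $\Theta$ is to restrict the identity $\mathcal{D}(d\omega)=(d_1+d_2)\mathcal{D}(\omega)$ of Proposition~\ref{Dchainmap} directly to $e_1$: the $d_1$ term restricts verbatim to the group coboundary $b_{\cR}$ of step~3, while rewriting the de Rham differential $d_2\mathcal{D}(\omega)$ in the left-invariant coframe $\{\widetilde\omega^i\}$ and evaluating at $e_1$ produces the Chevalley--Eilenberg differential $\partial$ together with correction terms; differentiating the strong covariance identity along the $G_1$-directions at $e_1$ should identify these correction terms with $-\sum_i\omega^i\wedge(Z_i\trt\,\cdot\,)$, which is the remaining part of $\partial_{\Fg}$. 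Matching the signs and the precise $\trt$-action in those correction terms is the only genuinely fiddly bookkeeping, and the $\Theta$-composite argument is attractive because it keeps that bookkeeping inside the (already proven) statement that $\Theta$ is an isomorphism.
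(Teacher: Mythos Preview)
Your proposal is correct, but the paper actually takes what you call the ``alternative, self-contained route'': it restricts the identity $\mathcal{D}(d\omega)=(d_1+d_2)\mathcal{D}(\omega)$ of Proposition~\ref{Dchainmap} to $e_1$, observes that the Stokes (boundary) integral restricts to the group coboundary, and asserts that the $d_2$-term evaluated at $e_1$ yields the Lie algebra coboundary with coefficients --- exactly the computation you sketch with the correction terms $-\sum_i\omega^i\wedge(Z_i\trt\,\cdot\,)$ coming from differentiating the strong covariance in the $G_1$-directions.

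Your main approach via $\mathcal{E}=\Theta^{-1}\circ\mathcal{D}$ is valid but runs in the opposite direction from the paper's organization. At this point the paper has \emph{not} yet established that $\mathcal{D}$ lands in $D^{p,q}$; in fact it deduces that later, from this very corollary together with the subsequent proposition that $\mathcal{E}(\omega)$ has representative coefficients, by writing $\mathcal{D}=\Theta\circ\mathcal{E}$. So your main route front-loads the coefficient verification (the ``delicate'' step you flag) and then gets the chain-map property for free from $\Theta$, whereas the paper gets the chain-map property cheaply by direct restriction and postpones representativity. Either ordering works; the paper's has the advantage that Corollary~\ref{Echain1} only claims the target $C^{p}(G_2,\wedge^{q}\Fg_1^{\ast})$ rather than $C^{p}_{\cR}$, so no representativity is needed at this stage.
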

\begin{proof}
The covariance property is preserved by the evaluation. \\
Use Prop.~\ref{Dchainmap}, the evaluation of the second integral in its proof at $e_1$ is clearly group coboundary, while the first integral, when evaluate at $e_1$, gives Lie algebra coboundary with coefficients.
\end{proof}
As discussed in~\cite[2.1]{moscovici_hopf_2011} we can also verify the explicit description of $\mathcal{E}$:
Fix a basis $Z_i$ of $\Fg_1$, denote by $\tilde{Z_i}$ the corresponding left invariant vector fields, $\omega_i$ the dual basis, $\tilde{\omega}_i$ the corresponding left invariant forms fields. Use the formula in~\cite[2.2]{moscovici_hopf_2011}, we define $\nu(\vp,\psi)=\vp(\psi \tlt \vp)^{-1}$. Let $\imath:L_2 \to L_2$ be the inversion map, $\iota$ be the contraction. Let $\omega$ be in $(\wedge^{n} (\Fg_{1} \oplus \Fl_{2})^{\ast})^{\Fh_{2}}$ and $\tilde{\omega}$ the corresponding left invariant $\Fh_2-$basic form on $(G_1\bowtie G_2)/H_2$. It is the key that we can have explicit expression of such invariant form in order to make use of this explicit $\mathcal{E}$ map. Denote 
\begin{align}
\begin{aligned}\label{mumap}
&\mu :  (\wedge^{n} (\Fg_{1} \oplus \Fl_{2})^{\ast})^{\Fh_{2}} \to \sum_{p+q=n} (\Omega^{p}(L_2) )^{\Fh_{2}} \otimes \wedge^{q} \Fg_1^{\ast}\\
&\mu_q(\omega)=\sum_{|I|=q} \imath^\ast(\iota_{\tilde{Z}_{I}(e)}\nu^\ast(\tilde{\omega})) \otimes \omega_{I}, \\
& I=(i_1 < \dots < i_q)\quad \text{and} \quad \omega_{I}=\omega_{i_1}\wedge \cdots \wedge \omega_{i_q}
\end{aligned}
\end{align}
Use the formula in~\cite[2.2]{moscovici_hopf_2011} We then write a map $\mathcal{E}=\int\limits_{\Delta} \circ \mu $ from $(\wedge^{n} (\Fg_{1} \oplus \Fl_{2})^{\ast})^{\Fh_{2}}$ to $ \sum_{p+q=n}  C^{p}_{\cR}(G_2,\wedge ^{q} \Fg_1^{\ast})$:
\begin{align} \label{mapE}
\mathcal{E} (\omega) (\psi_0,\dots,\psi_{p})=  \int\limits_{\Delta(\psi_0,\dots,\psi_{p})} \mu_q(\omega)
\end{align}
one can also compare this map with the integration along simplex map as used by Dupont in~\cite{dupont_simplicial_1976}. 
Then we can have 
\begin{align} 
\mathcal{E} (\omega) (\psi_0,\dots,\psi_{p})=  \mathcal{D} (\omega) (\psi_0,\dots,\psi_{p}) \vert _{e_1}
\end{align}

\begin{Proposition}
$\mathcal{E} (\omega)$ are representative functions of $\psi_i$s.
\end{Proposition}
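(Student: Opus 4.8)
The statement asserts that for $\omega \in (\wedge^n(\Fg_1 \oplus \Fl_2)^\ast)^{\Fh_2}$, the cochain $\mathcal{E}(\omega)(\psi_0,\dots,\psi_p)$ — which takes values in $\wedge^q \Fg_1^\ast$ — has coefficients that are representative functions of the variables $\psi_0,\dots,\psi_p \in G_2$. The strategy is to unwind the definition $\mathcal{E} = \int_\Delta \circ\, \mu$ from~\eqref{mumap} and~\eqref{mapE}, and to track where the dependence on the $\psi_i$ enters. There are exactly two places: first through the domain of integration $\Delta(\psi_0,\dots,\psi_p)$, the affine simplex with vertices $\pi_L(\psi_i) = \log$-coordinates in $L_2$; and second, crucially, there is \emph{no} $\psi$-dependence inside the integrand $\mu_q(\omega)$ itself, since $\mu_q(\omega)$ is a fixed $\Fh_2$-invariant form on $L_2$ tensored with a constant in $\wedge^q \Fg_1^\ast$. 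So the whole $\psi$-dependence is through the simplex of integration.

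**Key steps.** First I would reduce to a single coefficient: fix a multi-index $I$ with $|I| = q$ and consider the scalar function $(\psi_0,\dots,\psi_p) \mapsto \int_{\Delta(\psi_0,\dots,\psi_p)} \bigl(\mu_q(\omega)\bigr)_I$, where $(\mu_q(\omega))_I = \imath^\ast(\iota_{\tilde Z_I(e)} \nu^\ast \tilde\omega) \in \Omega^p(L_2)^{\Fh_2}$ is a fixed smooth $p$-form on the simply-connected nilpotent group $L_2$. Second, since $L_2 = \exp(\Fl_2)$ with global exponential coordinates, I would write $\omega_{\text{form}} := (\mu_q(\omega))_I = \sum_{|J|=p} g_J(x)\, dx^{J}$ in the linear coordinates $x = \log(\cdot)$ on $L_2$, where the $g_J$ are polynomial functions of $x$ — this uses that on a nilpotent Lie group, $\Fh_2$-invariant (indeed all translation-related) forms pulled back via $\nu$ and $\imath$ have polynomial coefficients in exponential coordinates, because group multiplication, inversion, and the adjoint action are polynomial maps in these coordinates (Birkhoff embedding / Malcev coordinates). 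Third, the integral of a form with polynomial coefficients over the affine simplex spanned by the points $x_i = \log \pi_L(\psi_i)$ is, by the standard change of variables $x = \sum t_i x_i$ on the standard simplex and explicit integration of $\prod t_i^{a_i}$ over $\{t_i \geq 0, \sum t_i = 1\}$ (these are Dirichlet integrals, rational numbers), a \emph{polynomial function of the coordinates of the vertices $x_0, \dots, x_p$}. Fourth, each coordinate $x_i^{(k)}$ of $x_i = \log\pi_L(\psi_i)$ is itself a representative function of $\psi_i$: this is exactly the content behind Proposition~\ref{11} and the fact that on $G_2 = H_2 \ltimes L_2$ the projection $\pi_L$ followed by $\log$ gives coordinate functions lying in $\cR(G_2)$ (the matrix-coefficient / polynomial nature of the nilpotent part of a representative function). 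Since $\cR(G_2)$ is an algebra closed under products and sums, a polynomial in the $x_i^{(k)}$ is again in $\cR(G_2)$, and a finite tensor product over the $p+1$ slots lands in $\cR(G_2)^{\otimes(p+1)}$, which is precisely the condition for membership in $C^p_\cR(G_2, \wedge^q \Fg_1^\ast)$.

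**The main obstacle.** The hard part is step two and four — verifying that $\log \circ\, \pi_L$ produces genuine representative functions on $G_2$, and that $\nu^\ast\tilde\omega$ has polynomial coefficients. For the first, one uses that $L_2$ is the nucleus: a representative function restricted to the nilpotent part is a polynomial in exponential coordinates (Hochschild--Mostow theory, cf.~\cite{hochschild_representations_1957}), so conversely these coordinate functions — being matrix entries of a faithful unipotent representation composed with $\log$ — belong to $\cR(G_2)$; alternatively one invokes Proposition~\ref{11} and the explicit form of $\gamma_i^j$ to see that the building blocks $\psi \mapsto F(\psi \tlt \vp)$ are representative. For the second, $\tilde\omega$ is left-invariant and $\Fh_2$-basic on $(G_1 \bowtie G_2)/H_2$, and $\nu(\vp,\psi) = \vp(\psi \tlt \vp)^{-1}$ is built from the (polynomial, in exp-coordinates) group operations of the bicrossed product, so its pullback and the contraction $\iota_{\tilde Z_I(e)}$ followed by $\imath^\ast$ preserve polynomiality; the $\Fh_2$-invariance of $\omega$ guarantees descent to $L_2$. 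Once these two facts are in hand, the conclusion follows by the closure properties of $\cR(G_2)$ under the algebra operations and the explicit Dirichlet-integral formula for integration of polynomials over a simplex, which I would only sketch rather than compute in full.
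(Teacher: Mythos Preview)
Your argument is correct and follows essentially the same route as the paper: both identify that the integrand $\mu_q(\omega)$ is a polynomial-coefficient form on $L_2$ in exponential coordinates (nilpotency), so integration over the affine simplex $\Delta(\psi_0,\dots,\psi_p)$ yields polynomials in the coordinates of the vertices $\log\pi_L(\psi_i)$, which are representative on $G_2$.

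Two minor differences in emphasis are worth noting. First, to show that the contracted form $\imath^\ast(\iota_{\tilde Z_I(e)}\nu^\ast\tilde\omega)$ has polynomial coefficients, the paper invokes Proposition~\ref{11} (that $Z_I \trt P$ stays representative, hence polynomial on $L_2$) rather than arguing directly from polynomiality of the bicrossed-product operations as you do; both work, but the paper's argument makes explicit where the $\Fg_1$-action enters through the contraction. Second, for the passage from $\cR(L_2)$ to $\cR(G_2)$, the paper leans on the $H_2$-invariance of $\mathcal{E}(\omega)$ and checks directly that an $H_2$-invariant $F \in \cR(L_2)$, extended trivially via $F(hl)=F(l)$, satisfies the coproduct identity on $G_2=H_2\ltimes L_2$; you instead assert that the coordinate functions $\log\circ\pi_L$ are already in $\cR(G_2)$. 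Your claim is true but implicitly uses the same $H_2$-invariance that the paper isolates --- without it, a general polynomial on $L_2$ need not extend to a representative function on $G_2$. Your explicit handling of the simplex integral via Dirichlet integrals is a nice addition that the paper leaves implicit.
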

\begin{proof}
Because $L_2=\exp(\Fl_2)$ is nilpotent, representative functions are exactly the polynomial functions. Use exponential coordinates on $L_2$ and we can represent the projection of $ \nu^\ast(\tilde{\omega})$ on $L_2$ as forms with polynomial coefficients. Now apply the contractions $\iota_{\tilde{Z}_{e}^{I}}$, the only new type of coefficients that appear are of the form $Z_I \trt P$ where $P$ is some polynomial function of $L_2$. By Lemma~\ref{11} $Z_I \trt P$ is again representative hence polynomial. Therefore $ \imath^\ast(\iota_{\tilde{Z}_{I}(e)}\nu^\ast(\tilde{\omega})) $ are polynomial differential forms on $L_2$ and $\mathcal{E} (\omega)$ are representative functions on $L_2$. 

Furthermore they are $H_2$ invariant, every $H_2$ invariant $L_2$ representative function is representative on $G_2$ because of the semi direct product decomposition $G_2=H_2 \ltimes L_2$. More precisely if we extend any $F \in \cR(L_2)$ trivially to a function on $G_2$ by $F(hl)=F(l)$, then

\begin{align*}
F \big( (h_1l_1)(h_2l_2) \big)&=F \big( (h_1h_2)(l_1 \tlt h_2 l_2 ) \big) =F(l_1 \tlt h_2 l_2 )=F_{(1)}(l_1 \tlt h_2) F_{(2)}(l_2) \\
&=F_{(1)}( l_1) F_{(2)}(l_2) =F_{(1)}( h_1l_1) F_{(2)}(h_2l_2),
\end{align*}

hence $F \in  \cR(G_2) $ and $\pi_L^{\ast}$ preserves representative functions.

Therefore $\mathcal{E} (\omega)$ are representative functions on $G_2$. 
\end{proof}

\begin{Corollary}
$\mathcal{E}$ is a chain map from $(\wedge^{n} (\Fg_{1} \oplus \Fl_{2})^{\ast})^{\Fh_{2}}$ to $C^{p}_{\cR}(G_2,\wedge ^{q} \Fg_1^{\ast}) $
\end{Corollary}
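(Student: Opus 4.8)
The plan is to obtain this statement simply by combining the two preceding results. By Corollary~\ref{Echain1}, the map $\mathcal{E} = \mathcal{D}(-)\vert_{e_1}$ is already a chain map from $(\wedge^{n}(\Fg_1 \oplus \Fl_2)^{\ast})^{\Fh_2}$ into the full complex $C^{p}(G_2, \wedge^{q}\Fg_1^{\ast})$ of group cochains with values in $\wedge^{q}\Fg_1^{\ast}$; that is, $\mathcal{E}$ already intertwines the exterior differential $d$ with $b_{\cR} + \partial_{\Fg}$. The preceding Proposition shows that for every $\omega$ the coefficient functions of $\mathcal{E}(\omega)(\psi_0,\dots,\psi_p)$ are representative functions on $G_2$, so the image of $\mathcal{E}$ in fact lies in the subspace $C^{p}_{\cR}(G_2, \wedge^{q}\Fg_1^{\ast})$.

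Hence the only thing left to check is that $C^{p}_{\cR}(G_2, \wedge^{q}\Fg_1^{\ast})$ is a genuine subcomplex of $C^{p}(G_2, \wedge^{q}\Fg_1^{\ast})$, i.e. that the two boundary operators preserve representativeness. For $b_{\cR}$ this is immediate from the explicit formula $b_{\cR}(c)(\psi_0,\dots,\psi_{p+1}) = \sum_i (-1)^i c(\psi_0,\dots,\hat{\psi_i},\dots,\psi_{p+1})$, which only takes alternating sums and so keeps the coefficients in $\cR(G_2)$. For $\partial_{\Fg}$, beyond the de~Rham piece $\partial$ (which only acts on the $\wedge^{\bullet}\Fg_1^{\ast}$ factor), the new coefficient functions that occur are of the form $Z_i \trt F$ with $F \in \cR(G_2)$, and these are again representative by Proposition~\ref{11}. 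Thus both boundary operators restrict, and $\mathcal{E}\colon (\wedge^{n}(\Fg_1 \oplus \Fl_2)^{\ast})^{\Fh_2} \to C^{p}_{\cR}(G_2, \wedge^{q}\Fg_1^{\ast})$ is a chain map.

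There is no serious obstacle here: the substantive work — verifying the chain-map identity $\mathcal{E}(d\omega) = (b_{\cR}+\partial_{\Fg})\mathcal{E}(\omega)$ and the representativeness of the coefficients — has already been done in Corollary~\ref{Echain1} and the preceding Proposition, and this Corollary merely repackages them after noting that the ambient differentials restrict to the representative subcomplex. The only point requiring a moment's attention is the compatibility of the $\Fg_1$-action occurring in $\partial_{\Fg}$ with the algebra $\cR(G_2)$, which is precisely the content of Proposition~\ref{11}.
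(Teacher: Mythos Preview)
Your proposal is correct and follows essentially the same approach as the paper, which simply says ``Use previous proposition and corollary~\ref{Echain1}.'' You have added the explicit check that $C^{p}_{\cR}(G_2,\wedge^{q}\Fg_1^{\ast})$ is closed under $b_{\cR}$ and $\partial_{\Fg}$, which the paper takes for granted (having already treated $C^{p,q}_{\cR}$ as a complex in \autoref{sec:groupoid}); this is harmless extra care and your justification via Proposition~\ref{11} is exactly right.
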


\begin{proof}
Use previous proposition and corollary~\ref{Echain1}.
\end{proof}

\begin{Corollary}
$\mathcal{D}$ is a chain map from $(\wedge^{n} (\Fg_{1} \oplus \Fl_{2})^{\ast})^{\Fh_{2}}$ to $D^{p,q} \subset C^{p,q}$
\end{Corollary}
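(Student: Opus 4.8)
The plan is not to compute anything new, but to observe that this corollary is a formal consequence of facts already assembled. By Proposition~\ref{Dchainmap}, $\mathcal{D}$ is already a chain map from $(\wedge^{n}(\Fg_1\oplus\Fl_2)^{\ast})^{\Fh_2}$ into $C^{p,q}$, and $D^{p,q}\subset C^{p,q}$ is a subcomplex; so the only thing left to establish is that $\mathcal{D}(\omega)$ actually takes values in $D^{p,q}$, after which the chain-map assertion into $D^{p,q}$ is automatic.

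First I would use the strong covariance Lemma above to note that $\mathcal{D}(\omega)(\psi_0,\dots,\psi_p)$ is determined by its restriction to $e_1\in G_1$: evaluating the covariance identity at $\phi=\vp\in G_1$ reconstructs $\mathcal{D}(\omega)$ by exactly formula~\eqref{thetamap} defining $\Theta$,
\begin{align*}
\mathcal{D}(\omega)(\psi_0,\dots,\psi_p)\big|_{\vp}
&=L_{\vp}^{\ast}\Big(\mathcal{D}(\omega)(\psi_0\tlt\vp,\dots,\psi_p\tlt\vp)\big|_{e_1}\Big)\\
&=L_{\vp}^{\ast}\big(\mathcal{E}(\omega)(\psi_0\tlt\vp,\dots,\psi_p\tlt\vp)\big),
\end{align*}
the second line being the definition of $\mathcal{E}$. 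Hence $\mathcal{D}(\omega)=\Theta(\mathcal{E}(\omega))$. Since the Corollary immediately preceding this one gives $\mathcal{E}(\omega)\in C^{p}_{\cR}(G_2,\wedge^{q}\Fg_1^{\ast})$, and $\Theta$ carries $C^{p}_{\cR}(G_2,\wedge^{q}\Fg_1^{\ast})$ isomorphically onto $D^{p,q}$, we conclude $\mathcal{D}(\omega)\in D^{p,q}$; moreover $\mathcal{D}=\Theta\circ\mathcal{E}$ is a chain map because $\mathcal{E}$ is one and $\Theta$ is an isomorphism of complexes (equivalently, the property is inherited from Proposition~\ref{Dchainmap}).

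For completeness I would then spell out why $\Theta(\mathcal{E}(\omega))$ visibly has the shape required in the definition of $D^{p,q}$: writing $\mu_q(\omega)=\sum_{|I|=q}a_I\otimes\omega_I$, the argument showing that $\mathcal{E}(\omega)$ is representative shows each $a_I$ is a polynomial form on $L_2$, so $\mathcal{E}(\omega)(\psi_0,\dots,\psi_p)=\sum_I\big(\int_{\Delta(\psi_0,\dots,\psi_p)}a_I\big)\,\omega_I$ has coefficients that are finite sums of products $\prod_i F_i(\psi_i)$ with $F_i\in\cR(G_2)$; applying $\Theta$ substitutes $\psi_i$ by $\psi_i\tlt\vp$ in these coefficients, producing exactly functions $\vp\mapsto\sum_{\text{finite}}\prod_i F_i(\psi_i\tlt\vp)$, while $L_{\vp}^{\ast}$ turns the constant forms $\omega_I$ into the left $G_1$-invariant forms $\widetilde{\omega}_I$. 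The main obstacle here is nothing deep: it is just a matter of matching the orientation/direction conventions in the covariance Lemma with the reconstruction formula~\eqref{thetamap} for $\Theta$, and of confirming that the standing affineness hypotheses on the $G_1$- and $G_2$-actions on the exponential coordinates of $L_2$ are precisely what give $\Delta(\psi_0\tlt\vp,\dots,\psi_p\tlt\vp)=\Delta(\psi_0,\dots,\psi_p)\tlt\vp$, so that the covariance Lemma applies to the $G_1$-arguments — both of which have already been recorded above, so the corollary follows at once.
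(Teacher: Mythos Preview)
Your proposal is correct and follows essentially the same route as the paper: both arguments recognize that $\mathcal{D}=\Theta\circ\mathcal{E}$ via the strong covariance property, and then use the preceding Corollary (that $\mathcal{E}$ is a chain map into $C^{p}_{\cR}(G_2,\wedge^{q}\Fg_1^{\ast})$) together with the fact that $\Theta$ is an isomorphism of complexes onto $D^{p,q}$. Your write-up is simply more detailed than the paper's, which records only the factorization formula and the resulting identity $\mathcal{D}(\omega)(\psi_0,\dots,\psi_p)\vert_{\vp}=L_{\vp}^{\ast}(\mathcal{E}(\omega))(\psi_0\tlt\vp,\dots,\psi_p\tlt\vp)$.
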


\begin{proof}

The map $\mathcal{E}$ can be extended by the isomorphism $\Theta:C^{p}_{\cR}(G_2,\wedge ^{q} \Fg_1^{\ast}) \to D^{p,q}$ defined in~\ref{thetamap}
\begin{align*}
\Theta(\alpha)(\psi_0,\dots, \psi_p) \vert _{\vp} =L_{\vp}^{\ast}(\alpha) (\psi_0 \tlt \vp,\dots,\psi_{p}\tlt \vp)
\end{align*}
back to map $\mathcal{D}=\Theta \circ \mathcal{E}$:
\begin{align*}
\mathcal{D}(\omega) (\psi_0,\dots,\psi_{p})\vert_{\vp}=L_{\vp}^{\ast}(\mathcal{E} (\omega)) (\psi_0 \tlt \vp,\dots,\psi_{p}\tlt \vp)
\end{align*}

\end{proof}

We can also have the identification below in this case:
$(\wedge^{n} (\Fg_{1} \oplus \Fl_{2})^{\ast})^{\Fh_{2}} \cong \bigoplus_{p+q=n}(\wedge^{q} \Fg_1^{\ast}\otimes \wedge^{p} (\Fl_2^{\ast})^{\Fh_2})$ \label{identification2}
follow the proof in~\cite[Lemma 2.7]{moscovici_hopf_2011} the isomorphism is given by
\begin{align}
\begin{aligned}
&\natural: (\wedge^{n} (\Fg_{1} \oplus \Fl_{2})^{\ast})^{\Fh_{2}} \to \bigoplus_{p+q=n}(\wedge^{q} \Fg_1^{\ast}\otimes \wedge^{p} (\Fl_2^{\ast})^{\Fh_2})\\
&\natural (\omega)(X^1,\dots, X^q \vert \xi_1,\dots \xi_p)= \omega (X^1\oplus 0, \dots , X^q \oplus 0, 0 \oplus \xi_1, \dots , 0 \oplus \xi_p)
\end{aligned}
\end{align}
with its inverse 
\begin{align}
\begin{aligned}
&\natural^{-1} (\mu \otimes \nu)(X^1 \oplus \xi_1 , \dots , X^{p+q} \oplus \xi_{p+q})\\
=&\sum_{\sigma \in Sh(p,q)}(-1)^{\sigma} \mu (X^{\sigma(1)},\dots,X^{\sigma(p)}) \nu (\xi _{\sigma(p+1)},\dots, \xi _{\sigma(p+q)} )
\end{aligned}
\end{align}

Use a slight modification of \cite[Lemma.~2.8,Cor.~2.9]{moscovici_hopf_2011} we can show that $\mu \vert _{e_2} $ coincides with $\natural$.

For $\xi_1,\dots \xi_p \in \Fl_2$, denote 
\begin{equation*}
\begin{aligned}
&\Psi(\xi_1,\dots \xi_p)=\\
&\big(1,\exp(s_{\sigma(1)}\xi_{\sigma(1)}),\exp(s_{\sigma(1)}\xi_{\sigma(1)})\exp(s_{\sigma(2)}\xi_{\sigma(2)}),\dots,\exp(s_{\sigma(1)}\xi_{\sigma(1)})\cdots\exp(s_{\sigma(p)}\xi_{\sigma(p)})\big)
\end{aligned}
\end{equation*}
Define a map $j:C^{p}_{\cR}(G_2,\wedge ^{q} \Fg_1^{\ast}) \to \wedge^{q} \Fg_1^{\ast}\otimes \wedge^{p} (\Fl_2^{\ast})^{\Fh_2}$
\begin{align*}
 &j(\alpha)(X^1,\dots, X^q \vert \xi_1,\dots \xi_p) \\
 =&\sum_{\sigma \in S_{p}}  (-1)^{\sigma} \left.\frac{d}{ds_1}\right|_{_{s_1=0}} \cdots \left.\frac{d}{ds_p}\right|_{_{s_p=0}} \alpha(\Psi(\xi_1,\dots \xi_p))(X^1,\dots, X^q)
\end{align*}

then The map $\mathcal{E}$ has a quasi-inverse $ \natural ^{-1}\circ j$.\\

\begin{Lemma}
$( \natural ^{-1}\circ j) \circ \mathcal{E} $ is identity. 
\end{Lemma}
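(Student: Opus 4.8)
The plan is to verify directly that the composite $(\natural^{-1}\circ j)\circ\mathcal{E}$ equals the identity on $(\wedge^{n}(\Fg_1\oplus\Fl_2)^{\ast})^{\Fh_2}$, using the explicit description $\mathcal{E}=\int_{\Delta}\circ\mu$ from~\eqref{mapE} together with the product decomposition from~\ref{identification2}. First I would reduce to a pointwise identity: since $\natural$ is an isomorphism, it suffices to show that $j\circ\mathcal{E}=\natural$, i.e. that for all $X^1,\dots,X^q\in\Fg_1$ and $\xi_1,\dots,\xi_p\in\Fl_2$ one has
\begin{align*}
(j\circ\mathcal{E})(\omega)(X^1,\dots,X^q\mid\xi_1,\dots,\xi_p)=\omega(X^1\oplus 0,\dots,X^q\oplus 0,0\oplus\xi_1,\dots,0\oplus\xi_p).
\end{align*}

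Next I would unwind the left-hand side. Plugging $\mathcal{E}(\omega)=\int_{\Delta(\psi_0,\dots,\psi_p)}\mu_q(\omega)$ into the definition of $j$, the key point is that $j$ differentiates the value of $\mathcal{E}(\omega)$ along the path of partial products $\Psi(\xi_1,\dots,\xi_p)$ at $s_1=\dots=s_p=0$. Since $\mathcal{E}(\omega)$ is obtained by integrating the $p$-form $\mu_q(\omega)$ over the affine simplex $\Delta(\Psi(\xi_1,\dots,\xi_p))$ spanned by $1,\exp(s_{\sigma(1)}\xi_{\sigma(1)}),\dots$ in the exponential coordinates on $L_2$, the composite $\left.\tfrac{d}{ds_1}\right|_0\cdots\left.\tfrac{d}{ds_p}\right|_0$ applied to $\int_{\Delta}\mu_q(\omega)$ extracts precisely the value of the $p$-form part of $\mu_q(\omega)$ on the tangent vectors $\xi_1,\dots,\xi_p$ at the identity $e_2\in L_2$, because the volume of the infinitesimal simplex scales like $\tfrac{1}{p!}\,ds_1\cdots ds_p$ and the antisymmetrizing sum over $\sigma\in S_p$ cancels the $\tfrac{1}{p!}$. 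Concretely, $(j\circ\mathcal{E})(\omega)(X^1,\dots,X^q\mid\xi_1,\dots,\xi_p)=\langle\mu_q(\omega)|_{e_2},\ \xi_1\wedge\dots\wedge\xi_p\rangle$ evaluated against $\omega_I$-components, and then I invoke the slight modification of~\cite[Lemma~2.8, Cor.~2.9]{moscovici_hopf_2011} quoted just above, which says $\mu|_{e_2}$ coincides with $\natural$; this is exactly the desired identity. The role of the inversion map $\imath$ and of $\nu(\vp,\psi)=\vp(\psi\tlt\vp)^{-1}$ in the definition of $\mu$ is only to implement the change of variables so that $\nu^{\ast}(\tilde\omega)$ becomes, at $e_1\in G_1$ and $e_2\in L_2$, literally $\tilde\omega$ itself; hence at the base point the contraction $\iota_{\tilde Z_I(e)}$ followed by $\imath^{\ast}$ just reads off the corresponding component of $\omega$.

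I would then assemble the $\natural^{-1}$ on the outside: since $j\circ\mathcal{E}=\natural$ on all slots simultaneously and $\natural^{-1}\circ\natural=\id$ by the formula in~\ref{identification2}, we get $(\natural^{-1}\circ j)\circ\mathcal{E}=\id$. The main obstacle I anticipate is the bookkeeping in the differentiation step: one must check carefully that differentiating an integral over a simplex whose vertices move along $\exp(s_i\xi_i)$ really does produce the multilinear alternating form evaluated on the $\xi_i$, including getting the combinatorial factor $\tfrac{1}{p!}$ to match the $\tfrac{1}{(p+1)!}$-type normalizations hidden in $\mathcal{E}$ and the sign conventions $(-1)^{\sigma}$, and that the higher-order terms in the Taylor expansion of $\mu_q(\omega)$ along the path (which involve the nonlinearity of $\exp$ and the $\Fh_2$-action) drop out after setting all $s_i=0$. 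This is a standard Dupont-type simplicial-de Rham computation (\cite{dupont_simplicial_1976}), so I would cite that and~\cite[2.2]{moscovici_hopf_2011} for the normalization and relegate the index-chasing to a short verification rather than reproduce it in full.
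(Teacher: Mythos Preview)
Your proposal is correct and follows essentially the same route as the paper: both reduce to showing $j\circ\int_{\Delta}$ is evaluation at $e_2$ and then invoke $\mu|_{e_2}=\natural$. The paper carries out the differentiation step explicitly in exponential coordinates on $L_2$, tracking the Baker--Campbell--Hausdorff correction terms $p_k(s_{\sigma(1)},\dots,s_{\sigma(k)})$ of degree $\ge 2$ and checking that exactly one monomial survives after $\left.\tfrac{d}{ds_1}\right|_0\cdots\left.\tfrac{d}{ds_p}\right|_0$, with the $\tfrac{1}{p!}$ from the simplex volume cancelled by the sum over $S_p$; you describe this same mechanism in words and propose to cite Dupont and \cite{moscovici_hopf_2011} for the bookkeeping.
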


\begin{proof}
since  $\mu \vert _{e_2}$ implement the $\natural$ map, we just need to show that $j\circ \int\limits_{\Delta}$ is the evaluation at $e_2$ .
we write $\xi_{k}=\xi_k^{j}\frac{\partial}{\partial x_{j}}$ and
write any $p$ form as $f_{i_1\dots i_p} dx^{i_1} \wedge \cdots \wedge  dx^{i_p}$ in the exponential coordinates.
\begin{equation*}
\begin{aligned}
x^{i_k}=&t_1 ( s_{\sigma(1)} \xi_{\sigma(1)}^{i_k}) +t_2 (s_{\sigma(1)}\xi_{\sigma(1)}^{i_k}+s_{\sigma(2)}\xi_{\sigma(2)}^{i_k} +p_2(s_{\sigma(1)},s_{\sigma(2)})) +\cdots \\
&+ t_k (s_{\sigma(1)}\xi_{\sigma(1)}^{i_k}+ \cdots + s_{\sigma(k)}\xi_{\sigma(k)}^{i_k} +p_{\sigma(k)}(s_{\sigma(1)},s_{\sigma(2)},\dots s_{\sigma(k)})) , \quad 1 \le k \le p 
\end{aligned}
\end{equation*}
where $p_2,\dots, p_k$ are $\xi-$valued polynomials with degree higher than $2$. 
\begin{equation*}
\begin{aligned}
d(x^{i_k})=& ( s_{\sigma(1)} \xi_{\sigma(1)}^{i_k}) d t_1  + (s_{\sigma(1)}\xi_{\sigma(1)}^{i_k}+s_{\sigma(2)}\xi_{\sigma(2)}^{i_k} +p_2(s_{\sigma(1)},s_{\sigma(2)})) d t_2 +\cdots \\
 &+ (s_{\sigma(1)}\xi_{\sigma(1)}^{i_k}+ \cdots + s_{\sigma(k)}\xi_{\sigma(k)}^{i_k} +p_{\sigma(k)}(s_{\sigma(1)},s_{\sigma(2)},\dots s_{\sigma(k)})) d t_k  , \quad 1 \le k \le p 
\end{aligned}
\end{equation*}
therefore if we calculate the wedge product of $dx^{i_1} \wedge \cdots \wedge  dx^{i_p}$, its degree will be at least $p$, viewed as $\xi dt^{1} \wedge \cdots \wedge  dt^{p}-$valued polynomials in $s_j$. Since we want to take derivative from $s_1$ to $s_p$, there is exact one term that gives non-zero value:

\begin{align*}
&\sum_{\sigma \in S_{p}}  (-1)^{\sigma} \left.\frac{d}{ds_1}\right|_{_{s_1=0}} \cdots \left.\frac{d}{ds_p}\right|_{_{s_p=0}}  \\
&\int\limits_{\Delta(\Psi(\xi_1,\dots \xi_p))} \sum_{i_1 < \dots < i_p} f_{i_1\dots i_p} dx^{i_1} \wedge \cdots \wedge  dx^{i_p} \\
=&\sum_{\sigma \in S_{p}}  (-1)^{\sigma} \left.\frac{d}{ds_1}\right|_{_{s_1=0}} \cdots \left.\frac{d}{ds_p}\right|_{_{s_p=0}} \\
&\int\limits_{\Delta^p} \sum_{i_1 < \dots < i_p} f_{i_1\dots i_p} s_{\sigma(1)} \xi_{\sigma(1)}^{i_1}dt_1\wedge \cdots \wedge  s_{\sigma(p)}\xi_{\sigma(p)}^{i_p} dt_{p} \\
=&\sum_{\sigma \in S_{p}}  (-1)^{\sigma} \int\limits_{\Delta^p} \sum_{i_1 < \dots < i_p} f_{i_1\dots i_p} (e_2)  \xi_{\sigma(1)}^{i_1}(e_2)dt_1\wedge \cdots \wedge  \xi_{\sigma(p)}^{i_p} (e_2) dt_{p} \\
=&\sum_{\sigma \in S_{p}}  (-1)^{\sigma}  \frac{1}{p! } \sum_{i_1 < \dots < i_p}( f_{i_1\dots i_p}  \xi_{\sigma(1)}^{i_1} \cdots  \xi_{\sigma(p)}^{i_p} )(e_2) \\
=&(f_{i_1\dots i_p} dx^{i_1} \wedge \cdots \wedge  dx^{i_p})(\xi_1^{j}\frac{\partial}{\partial x_{j}}, \dots , \xi_p^{j}\frac{\partial}{\partial x_{j}}) \vert_{e_2}
\end{align*}

\end{proof}

\begin{Proposition}\label{Equasi}
$\mathcal{E}$ is a quasi-isomorphism. 
\end{Proposition}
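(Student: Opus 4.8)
The plan is to combine the one-sided inverse of the preceding Lemma with a filtration argument. Since $(\natural^{-1}\circ j)\circ\mathcal{E}=\mathrm{id}$ on the relative Chevalley--Eilenberg complex $(\wedge^{\bullet}(\Fg_{1}\oplus\Fl_{2})^{\ast})^{\Fh_{2}}$ and all three maps are chain maps, the induced map $\mathcal{E}^{\ast}$ on cohomology is a split monomorphism; it remains to prove that $\mathcal{E}^{\ast}$ is onto, equivalently that $\natural^{-1}\circ j$ is a quasi-isomorphism.

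To get surjectivity I would filter both complexes by the exterior degree $q$ in $\Fg_{1}^{\ast}$. On the source this is the filtration attached to the decomposition $(\wedge^{n}(\Fg_{1}\oplus\Fl_{2})^{\ast})^{\Fh_{2}}\cong\bigoplus_{p+q=n}(\wedge^{q}\Fg_{1}^{\ast}\otimes\wedge^{p}\Fl_{2}^{\ast})^{\Fh_{2}}$ of~\ref{identification2}; on the target $\bigoplus_{p+q=\bullet}C^{p}_{\cR}(G_{2},\wedge^{q}\Fg_{1}^{\ast})$ it is the obvious filtration by $q$. All maps in sight --- $\mathcal{E}$ via~\ref{mapE} and~\ref{mumap}, the quasi-inverse $j$, and $\natural^{\pm1}$ --- preserve $q$, and each filtration is finite in every total degree, so $\mathcal{E}$ is a quasi-isomorphism as soon as it induces an isomorphism on the $E_{1}$-pages of the associated spectral sequences.

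On the $q$-th graded piece the source reduces to $(\wedge^{\bullet}\Fl_{2}^{\ast}\otimes\wedge^{q}\Fg_{1}^{\ast})^{\Fh_{2}}$ with the $\Fl_{2}$-Chevalley--Eilenberg differential twisted by the coaction on $\wedge^{q}\Fg_{1}^{\ast}$, computing $H^{\bullet}(\Fl_{2};\wedge^{q}\Fg_{1}^{\ast})^{\Fh_{2}}\cong H^{\bullet}(\Fg_{2},\Fh_{2};\wedge^{q}\Fg_{1}^{\ast})$; the target reduces to $(C^{\bullet}_{\cR}(G_{2},\wedge^{q}\Fg_{1}^{\ast}),b_{\cR})$, computing the representative group cohomology $H^{\bullet}_{\cR}(G_{2};\wedge^{q}\Fg_{1}^{\ast})$; and the graded component of $\mathcal{E}$ is, up to the fixed sign, the van Est cochain map $\omega\mapsto\bigl((\psi_{0},\dots,\psi_{p})\mapsto\int_{\Delta(\psi_{0},\dots,\psi_{p})}\mu_{q}(\omega)\bigr)$, i.e.\ integration of the contracted form $\mu_{q}(\omega)$ over the affine simplices in $H_{2}\backslash G_{2}\cong L_{2}$ (compare~\cite{van_est_group_1953,dupont_simplicial_1976}). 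Because $G_{2}=H_{2}\ltimes L_{2}$ with $H_{2}$ reductive and $L_{2}=\exp\Fl_{2}$ simply connected nilpotent, this map is a quasi-isomorphism: the polynomial Poincar\'{e} lemma for $L_{2}$ --- the very homotopy used in the proof of Theorem~\ref{representative cohomology}, which is also the reason $\mathcal{E}(\omega)$ is representative --- together with averaging over the reductive $H_{2}$ provides the contracting homotopy exactly as there, now with the twisted coefficient module $\wedge^{q}\Fg_{1}^{\ast}$. Hence $\mathcal{E}$ is an isomorphism on $E_{1}$ and therefore a quasi-isomorphism.

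The step I expect to be the main obstacle is the precise identification of the $q$-th graded component of $\mathcal{E}$ with van Est's integration map: one must unwind $\mu_{q}$ from~\ref{mumap} --- the composite $\imath^{\ast}\iota_{\tilde{Z}_{I}(e)}\nu^{\ast}$ applied to the left-invariant $\Fh_{2}$-basic form $\tilde{\omega}$ --- and check that, modulo lower filtration, it restricts to exactly the data fed into the van Est homotopy with coefficients in $\wedge^{q}\Fg_{1}^{\ast}$, keeping track of the signs inherited from~\ref{mapE}; one must also check that the relevant van Est statement holds in the representative (polynomial) category with nontrivial coefficients, which is precisely what the reductive-radical plus nilpotent-nucleus decomposition buys us. As a shortcut that bypasses the spectral sequence, one may instead observe that, through the isomorphism $\Theta$ and Theorem~\ref{representative cohomology} (respectively steps 1--3 of~\ref{subsec:bicomplexes}, the map $\cJ$, and~\cite[Thm.~4.10]{rangipour_van_2012}), the cohomology of the target total complex is finite dimensional in each degree and isomorphic to $H^{\bullet}(\Fg_{1}\bowtie\Fg_{2},\Fh_{2})$, the cohomology of the source; a split monomorphism between vector spaces of equal finite dimension is an isomorphism, so $\mathcal{E}^{\ast}$ is forced to be one.
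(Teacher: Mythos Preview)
Your shortcut at the end --- split monomorphism plus an external identification of the two cohomologies via \cite[Thm.~4.10]{rangipour_van_2012} --- is exactly the paper's first proof. The paper phrases it as ``$\mathcal{E}$ is a chain map with a one-sided inverse between two complexes already known to be quasi-isomorphic, hence is itself a quasi-isomorphism''; your finite-dimension remark makes explicit why that inference is valid.

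Your main spectral-sequence argument is a different organization of the same ingredients. The paper's second, self-contained proof introduces the triple complex $C^{s}_{\cR}(G_{2},\Omega_{L_{2}}^{t}\otimes\wedge^{q}\Fg_{1}^{\ast})$: for each fixed $q$ one has a double complex in $(s,t)$ with two augmentations, one by $C^{s}_{\cR}(G_{2},\wedge^{q}\Fg_{1}^{\ast})$ and one by $(\Omega_{L_{2}}^{t})^{G_{2}}\otimes\wedge^{q}\Fg_{1}^{\ast}\cong(\wedge^{t}\Fl_{2}^{\ast})^{\Fh_{2}}\otimes\wedge^{q}\Fg_{1}^{\ast}$; the columns are exact by the polynomial Poincar\'{e} lemma on the contractible $L_{2}$, the rows are exact by the reductive averaging homotopy from Theorem~\ref{representative cohomology}, and a standard zig-zag gives the quasi-isomorphism. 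Your filtration by $q$ plus ``graded $\mathcal{E}$ is van Est'' repackages the same two homotopies as the single statement that van Est is a quasi-isomorphism with coefficients. The trade-off is that the paper's route sidesteps the point you yourself flag as the main obstacle, namely checking that $\mu_{q}$ on the $q$-th graded piece really reduces (modulo lower filtration) to the van Est integrand; the triple-complex argument never needs that identification.
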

\begin{proof}
In~\cite[Theorem~4.10]{rangipour_van_2012} the authors showed that $(\wedge^{n} (\Fg_{1} \oplus \Fl_{2})^{\ast})^{\Fh_{2}}$ is quasi-isomorphic to the double complex $C^{q,p}(\wedge \Fg_1^{\ast},\otimes \cR(G_2))$ (\ref{doublecomplex2}). The later is also quasi-isomorphic to $C^{p}_{\cR}(G_2,\wedge ^{q} \Fg_1^{\ast})$. Therefore $\mathcal{E}$ as a chain map with one side inverse between two quasi-isomorphic complexes is a quasi-isomorphism.
\end{proof}
we can also give a direct proof here to show that $C^{p}_{\cR}(G_2,\wedge ^{q} \Fg_1^{\ast})$ is quasi-isomorphic to $(\wedge^{n} (\Fg_{1} \oplus \Fl_{2})^{\ast})^{\Fh_{2}}$.
\begin{proof}
we will take a look at the triple complex $C^{s}_{\cR}(G_2,\Omega_{L_2}^{t} \otimes \wedge ^{q} \Fg_1^{\ast})$. During the first step we will fix $q$ and vary $s$, $t$ and view it as a double complex for each fixed $q$.
we put an augmented row
\begin{align*}
C^{s}_{\cR}(G_2,\bC \otimes \wedge ^{q} \Fg_1^{\ast}) 
\end{align*}
there is an inclusion
\begin{align*}
0 \to C^{s}_{\cR}(G_2,\bC \otimes \wedge ^{q} \Fg_1^{\ast})  \xrightarrow{\varepsilon_1} C^{s}_{\cR}(G_2,\Omega_{L_2}^{0} \otimes \wedge ^{q} \Fg_1^{\ast}) \to C^{s}_{\cR}(G_2,\Omega_{L_2}^{1} \otimes \wedge ^{q} \Fg_1^{\ast}) \to \cdots
\end{align*}
and an augmented column
\begin{align*}
(\Omega_{L_2}^{t})^{G_2} \otimes \wedge ^{q} \Fg_1^{\ast}
\end{align*}
there is another inclusion
\begin{align*}
0 \to (\Omega_{L_2}^{t})^{G_2} \otimes \wedge ^{q} \Fg_1^{\ast}  \xrightarrow{\varepsilon_2} C^{0}_{\cR}(G_2,\Omega_{L_2}^{t} \otimes \wedge ^{q} \Fg_1^{\ast}) \to C^{1}_{\cR}(G_2,\Omega_{L_2}^{t} \otimes \wedge ^{q} \Fg_1^{\ast}) \to \cdots
\end{align*}
the columns are exact for $s \ge 0$ since $L$ is smoothly contractible.
the rows are exact for $t \ge 0$ because of the homotopy below (cf.,~\cite[Lemma~2.3]{kumar_extensions_2006}):
\begin{align*}
&H:\ C^{s}_{\cR}(G_2,\Omega_{L_2}^{t} \otimes \wedge ^{q} \Fg_1^{\ast}) \to \ C^{s-1}_{\cR}(G_2,\Omega_{L_2}^{t} \otimes \wedge ^{q} \Fg_1^{\ast})  \\
&H(c)(\psi_1,\dots,\psi_{p})=\pi(f_{(\psi_1,\dots,\psi_{p})}^c)
\end{align*}
where
\begin{align*}
f_{(\psi_1,\dots,\psi_{p})}^c (h):=c(hl,\psi_1,\dots,\psi_{p}) (l) \qquad  l\,\in L_2, h\,\in H_2
\end{align*}
and $\pi$ the projection to its $H_2$ invariant part, since $H_2$ is reductive and any $H_2$ module is completely reducible.
By the standard diagram chasing of double complex we have that $\varepsilon_1$ and $\varepsilon_2$ both induces isomorphisms to the cohomology of the double complex. Therefore these augmented complexes are quasi-isomorphic, while the augmented column is isomorphic to the complex $(\wedge^{n} (\Fg_{1} \oplus \Fl_{2})^{\ast})^{\Fh_{2}}$. 

\end{proof}

\subsection{Main theorem}

\begin{Lemma} \label{Hoch-iso}
The map $\Phi_{D} \circ \Theta$ from $(\sum_{p+q=n}C^{p}_{\cR}(G_2,\wedge ^{q} \Fg_1^{\ast}),d_1)$ to the Hochschild complex of $\cH$ is a quasi-isomorphism. 
\end{Lemma}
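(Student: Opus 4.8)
The plan is to reduce to a filtered comparison and run a spectral-sequence argument. Since $\Theta$ is an isomorphism of complexes, it suffices to show that $\Phi_{D}$ is a quasi-isomorphism from $\big(\sum_{p+q=n}D^{p,q},\,d_{1}\big)$ onto the Hochschild complex $(C^{\bullet}(\cH),b)$; that $\Phi_{D}$ is a chain map for this pair of differentials is the ``$d_{1}\leftrightarrow b$'' half of the bicomplex morphism of the preceding Proposition. Recall that $\Phi_{D}$ sends a cochain in $D^{p,q}$ to a Hochschild cochain of degree $p+\dim G_{1}-q$, so on the source the piece $D^{p,q}$ (equivalently $C^{p}_{\cR}(G_{2},\wedge^{q}\Fg_{1}^{\ast})$) is placed in homological degree $p+\dim G_{1}-q$, in which grading $d_{1}$ raises degree by one, matching $b$.

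Next I would filter both sides. On the source, filter $D^{p,q}$ by the de Rham degree $q$ on $G_{1}$; since $d_{1}$ preserves $q$ this is a filtration by subcomplexes, bounded because $0\le q\le\dim G_{1}$. On the target, use the Poincar\'e--Birkhoff--Witt filtration of $\cH=(\cR(G_{2})\acl\cU(\Fg_{1}))^{\cop}$ coming from the generators $\{F\acl Z_{I}\}$: let $F^{k}C^{n}(\cH)$ be spanned by the tensors $h^{1}\otimes\cdots\otimes h^{n}$ of total $\cU(\Fg_{1})$-order $\ge k$; the matched-pair compatibility relations established above show $\Delta_{\aclsub}^{\rm{op}}$, hence $b$, is filtered, and this filtration is also bounded in each degree. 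The two filtrations are matched under the reindexing (order in $\cU(\Fg_{1})$) $\leftrightarrow \dim G_{1}-q$; with respect to it $\Phi_{D}\circ\Theta$ is a filtered map, and this is the computational heart of the argument: one unwinds $\widetilde{\gamma}(\omega\otimes\delta_{\psi_{1}}\cdots\delta_{\psi_{p}})=\int_{G_{1}}\omega\wedge\gamma(e,\psi_{1},\dots,\psi_{p})$ together with the cyclically antisymmetrized formula for $\Phi(\gamma)$ and the identity $d(bU_{\psi}^{\ast})=db\,U_{\psi}^{\ast}-(-1)^{\partial b}b\,\delta_{\psi}U_{\psi}^{\ast}$, and tracks how many of the $l=p+\dim G_{1}-q$ factors $da^{j}$ produce $1$-forms on $G_{1}$ (exactly $\dim G_{1}-q$ of them, forced by the integration pairing) versus $\delta_{\psi}$-terms.

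Then I would compute the induced map on the $E_{1}$ pages. On the source, via the strong covariance property (a cochain in $D^{\bullet,q}$ is determined by its value at $e_{1}\in G_{1}$, exactly as for $\Theta$), $\mathrm{gr}^{q}$ is the representative group-cochain complex of $G_{2}$ valued in the constant coefficient system $\wedge^{q}\Fg_{1}^{\ast}$. On the target, by the Chevalley--Eilenberg identification of the leading symbol of $b$ on the $\cU(\Fg_{1})$-slots, $\mathrm{gr}\,C^{\bullet}(\cH)$ is $\wedge^{\bullet}\Fg_{1}^{\ast}$ tensored with the group-Hochschild complex of $\cR(G_{2})$ built from the faces $\delta_{i}$ restricted to the $\cR(G_{2})$-slots. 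The map $\mathrm{gr}(\Phi_{D}\circ\Theta)$ is then, up to an explicit sign (accounting for the $(-1)^{q(q+1)/2}$ in $\mathcal{D}$ and the orientation of $G_{1}$), the Poincar\'e-duality isomorphism $\wedge^{q}\Fg_{1}^{\ast}\xrightarrow{\ \sim\ }\wedge^{\dim G_{1}-q}\Fg_{1}^{\ast}$ tensored with the identity on the $\cR(G_{2})$-cochains --- precisely what the built-in $\int_{G_{1}}$ and the degree shift $l=p+\dim G_{1}-q$ encode. Being an isomorphism in every $q$ and every group-cochain degree, $\Phi_{D}\circ\Theta$ is an isomorphism on $E_{1}$, whence a quasi-isomorphism by the comparison theorem for complete bounded filtrations.

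The main obstacle is that middle step: verifying that $\Phi$ respects the two filtrations and that its associated graded is Poincar\'e duality tensored with the identity. Two sub-points need care. First, the leading-symbol computation, namely that at top $\cU(\Fg_{1})$-order the faces $\delta_{i}$ of $\cH$ degenerate to the pure group-coface maps on the $\cR(G_{2})$-slots (this is where $\Delta_{\aclsub}^{\rm{op}}$ and the matched-pair relations enter), so that the source differential $d_{1}$ and the graded Hochschild differential agree after the identification. Second, the sign bookkeeping in the fiberwise Poincar\'e duality on $G_{1}$; the cleanest route here is to test $\Phi_{D}\circ\Theta$ on the distinguished cocycles produced by $\mathcal{E}$, whose values at $e_{1}$ are explicit invariant forms and for which the integration over the affine simplices $\Delta(\psi_{0},\dots,\psi_{p})$ collapses to the group-cochain structure and the $G_{1}$-part to a single wedge-and-integrate, making the duality manifest. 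An alternative to the whole argument is to invoke the fact that the unrestricted $\Phi$ is a Hochschild quasi-isomorphism for $\cA=C_{c}^{\ify}(G_{1})\rtimes G_{2}^{\delta}$ (Connes \cite{connes_noncommutative_1994}, III.2.$\delta$) and to show its restriction to $D^{p,q}$, which lands in $\mathrm{Im}(\lambda^{\natural})\cong\cH^{\natural}_{(\delta,\sigma)}$ by Corollary~\ref{representativecochain2}, remains a quasi-isomorphism onto the Hochschild subcomplex; but that restriction statement reduces to the same symbol-level computation.
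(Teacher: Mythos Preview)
Your proposal and the paper's proof both run a spectral-sequence comparison, but they are organized quite differently. The paper works on the algebra side: via the injective characteristic map $\lambda^{\natural}:\cH^{\natural}_{(\delta,\sigma)}\hookrightarrow\cA^{\natural}$ it identifies the Hochschild complex of $\cH$ with the representative subcomplex of the Hochschild complex of $\cA=C_c^{\ify}(G_1)\rtimes G_2^{\delta}$, and then invokes the abstract Hochschild--Serre spectral sequence of Cartan--Eilenberg for the pair $\cA_1=C_c^{\ify}(G_1)\subset\cA$ (with a digression on local units and excision to handle non-unitality). The quotient $\cA_2$ is the group ring of $G_2^{\delta}$, and the $E_2$ term is the group cohomology of $G_2$ with coefficients in $H^{\ast}_{\cA_1}(\bC)\cong\wedge^{\ast}\Fg_1^{\ast}$; restricted to the representative range one gets exactly $C^{p}_{\cR}(G_2,\wedge^{q}\Fg_1^{\ast})$, and $\Phi_D\circ\Theta$ is then matched on $E_2$. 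Your route instead stays on the Hopf-algebra side and filters $C^{\bullet}(\cH)$ by PBW order in $\cU(\Fg_1)$, aiming for a Chevalley--Eilenberg identification on the associated graded. The paper's approach has the virtue that the filtration and the $E_2$ identification are classical (Cartan--Eilenberg plus the known Hochschild cohomology of $C_c^{\ify}(G_1)$), so the only new ingredient is checking that $\Phi_D\circ\Theta$ is compatible with the Hochschild--Serre filtration; your approach is more intrinsic to $\cH$ and avoids the detour through $\cA$ and the non-unital excision argument.

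That said, the step you yourself flag as the ``main obstacle'' is a genuine one and is not discharged in your outline. You assert that $\Phi_D$ sends $D^{p,q}$ into PBW filtration level $\dim G_1-q$ and that the associated graded map is Poincar\'e duality on $\wedge^{\bullet}\Fg_1^{\ast}$ tensored with the identity on the $\cR(G_2)$-cochains, but this requires an explicit symbol calculation of $\Phi$ that you only sketch (counting how many $da^j$ contribute a $G_1$-one-form versus a $\delta_{\psi}$). In particular, the cyclic antisymmetrization in $\Phi$ and the cross-terms in $\Delta_{\aclsub}^{\rm op}$ (the $u_{(1)<1>}$ factors) produce contributions whose PBW order must be shown to be strictly lower; without this the filtered-map claim is unproven. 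The paper sidesteps this by comparing directly at the $E_2$ page of the Hochschild--Serre spectral sequence, where the identification with representative group cochains is essentially tautological. If you pursue your route, the cleanest way to close the gap is probably to compute $\Phi_D$ on the explicit generators $\alpha\otimes F^0\wedge\cdots\wedge F^p\in(\wedge^{q}\Fg_1^{\ast}\otimes\wedge^{p+1}\cR(G_2))^{\cR(G_2)}$ from step~3 of \S\ref{subsec:bicomplexes}, where the combinatorics are most transparent.
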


\begin{proof}
Recall from Corollary~\ref{representativecochain2}, we shall identify $\cH^{\natural}_{(\delta,\sigma^{})}$ and $\text{Im}(\lambda^{\natural})$. In order to prove the statement we need to understand the Hochschild cohomology of the algebra $\cA$ with coefficients in the module $\bC$ given by the augmentation $\varepsilon$ on $\cA$. 

We use the abstract version of the Hochschild-Serre spectral sequence.

The following is from~\cite[Thm~6.1,p.~349]{cartan_homological_1956} 

A subalgebra $\cA_1 \subset \cA$ of an augmented algebra $\cA$ is called normal if the left ideal $J$ generated by $\text{Ker}\, \varepsilon_1$, where $\varepsilon_1=\varepsilon \vert _{\cA_1}$, is also a right ideal. 

we let 
\begin{align}
\cA_{2}=\cA / J,\qquad \text{or equivalently}\quad \cA_{2}=\bC \otimes_{\cA_1} \cA,
\end{align}
and then we have a spectral sequence converging to the Hochschild cohomology $H^{\ast}_{\cA}(\bC)$ of augmented algebra $\cA$ with coefficients in $\bC$ and with $E_2$ term given by 
\begin{align*}
H^{p}_{\cA_2}(H^{q}_{\cA_1}(\bC)).
\end{align*}

In our case we let $\cA=C_c^{\ify}(G_1)\rtimes G_2^{\delta}$ and $\cA_1=C_c^{\ify}(G_1)$, with augmentation 
\begin{align*}
\varepsilon(fU^{\ast}_{\psi})=f(1)\qquad \forall f \in C_c^{\ify}(G_1),\ \psi \in G_2.
\end{align*}
Thus the ideal $J$ is generated by 
\begin{align*}
g U^{\ast}_{\psi},\qquad g(1)=0.
\end{align*}
It is a two sided ideal because $\forall \psi \in G_2$, $\tilde{\psi}(1)=1$ and 
\begin{align*}
g(1)=0\qquad\text{iff}\quad (g \circ \tilde{\psi})(1)=0
\end{align*}
therefore the algebra $\cA_{2}=\bC \otimes_{\cA_1} \cA$ is the group ring of $G_2$. 

Now since the original version of Hochschild-Serre spectral sequence is for unital algebra, in order to apply the spectral sequence we need one more preparation, i.e., we would like to extend the result to our algebra. 

A algebra $\cA$ is said to have local units if for every finite family of elements $a_i \in \cA $ there is an element $u \in \cA$ such that $ua_i=a_iu=a_i$ for all $i$.

If an algebra has local units, it is excisive for Hochschild cohomology.~\cite[p32]{loday_cyclic_1997}\cite{wodzicki_excision_1989}.

Now we let $\cA=C_c^{\ify}(G_1)\rtimes G_2^{\delta}$ and $\cA_1=C_c^{\ify}(G_1)$. It is obvious that both our algebras $\cA_1$ and $\cA$ have local units (just take $\text{Id} \vert _{\bigcup \text{supp}(f_i)} U^{\ast}_{e}$ and $\text{Id} \vert _{\bigcup \text{supp}(f_i)}$ ). Hence we add a unit to $\cA_1$ as usual:
\begin{align*}
0 \to \bC  \to \tilde{\cA_1}  \to \cA_1\to 0
\end{align*}
where $\tilde{\cA_1}=\bC \oplus \cA_1$ with multiplication $(\lambda,u)(\mu,v)=(\lambda\mu,\lambda v+u \mu + uv)$ and unit $(1,0)$,
and add a unit to $\cA$:
\begin{align*}
0 \to \bC \cA_2  \to \tilde{\cA}  \to \cA\to 0
\end{align*}
where $\tilde{\cA}=\bC \cA_2 \oplus \cA$ with multiplication $(\lambda,u)(\mu,v)=(\lambda\mu,\lambda v+u \mu + uv)$ and unit $(1e,0)$.

Thus we can have a diagram as below:

\begin{equation*}
\begin{tikzpicture}[description/.style={fill=white,inner sep=2pt}]
\matrix (m) [matrix of math nodes, row sep=3em, column sep=3em, 
text height=1.5ex, text depth=0.25ex]
{ \dots & H^{p}_{\cA_2}(H^{q}_{\cA_1}(\bC)) & H^{p}_{\cA_2}(H^{q}_{\tilde{\cA_1}}(\bC)) & H^{p}_{\cA_2}(H^{q}_{\bC}(\bC)) & \dots \\ 
\dots & H^{p+q}_{\cA}(\bC)  & H^{p+q}_{\tilde{\cA}}(\bC) & H^{p+q}_{\cA_2}(\bC) & \dots  \\   };
  
\path[transform canvas={yshift=0ex},->,font=\scriptsize]
(m-1-1) edge  (m-1-2)  (m-1-2) edge   (m-1-3) (m-1-3) edge  (m-1-4)  (m-1-4) edge  (m-1-5)
(m-2-1) edge   (m-2-2)  (m-2-2) edge   (m-2-3) (m-2-3) edge   (m-2-4)  (m-2-4) edge   (m-2-5);

\path[transform canvas={xshift=0ex},->,font=\scriptsize]
(m-1-2) edge  (m-2-2)  (m-1-3) edge  (m-2-3)   (m-1-4) edge  (m-2-4)  ;

\end{tikzpicture} 
\end{equation*}
the first row is due to the excision property of $\cA_1$ and the second row is due to the excision property of $\cA$. Now we apply the abstract Hochschild-Serre spectral sequence for the middle and right columns and conclude that $H^{p}_{\cA_2}(H^{q}_{\cA_1}(\bC))$ also converge to $H^{p+q}_{\cA}(\bC)$. This proved the Hochschild-Serre spectral sequence for our algebra $\cA$.

Now $\text{Im}(\lambda^{\natural})$ can be teated as $\cA$ with the weak topology given by the range of $\lambda$, we can have a corresponding spectral sequence which converges to the Hochschild cohomology of $\cH$ and whose $E_2$ term is given by the representative group cohomology of $G_2$ with coefficients in $H^{\ast}_{\cA_1}(\bC)$, which is given by $\wedge^{q} \Fg_{1}^{\ast}$. On $E_2$ level, the map $\Phi_{D}  \circ \Theta$ realize the representative group cochains of $G_2$ with coefficients in $\wedge^{q} \Fg_{1}^{\ast}$.

\end{proof}

We now have the main result:

\begin{Theorem}\label{mthm1}
Let ($G_1$, $G_2$) be a matched pair of Lie groups, and $L_2$ be a nucleus of $G_2$ that is nilpotent. Let $\Fh_2$, $\Fg_1$, $\Fg_2$ and $\Fl_2$ denote the Lie algebras of $H_2:= G_2/L_2$, $G_1$, $G_2$ and $L_2$ respectively. Let us also assume that $H_2$ is $G_1-$invariant, the action $\trt$ of $\Fh_2$ on $\Fg_1$ is given by derivations, and the right $G_1$ action on $L_2$ is affine in its exponential coordinates. Then we have
$ \Phi_{D}  \circ \mathcal{D} $ induces an isomorphism from the relative Lie cohomology of the pair $(\Fg_{1} \bowtie \Fg_{2}, \Fh_{2})$ to the periodic cyclic cohomology of $\cH=(\cU(\Fg_1)   \acr  \cR(G_2))^{\cop}$ with coefficient $^\sigma \bC _ \delta$, shifted by $\text{dim}(G_1)$. We have 
\begin{align}
\begin{aligned}
&\bigoplus_{i\,\text{\tiny even} } H^{i} (\Fg_{1} \bowtie \Fg_{2}, \Fh_{2},\Cb) \cong HP^{\text{\tiny even}} (\cH,^\sigma \bC _ \delta) \\
&\bigoplus_{i\,\text{\tiny odd} } H^{i} (\Fg_{1} \bowtie \Fg_{2}, \Fh_{2},\Cb) \cong HP^{\text{\tiny odd}} (\cH,^\sigma \bC _ \delta)
\end{aligned}
\end{align}
if $\text{dim}(G_1)$ is even, and 
\begin{align}
\begin{aligned}
&\bigoplus_{i\,\text{\tiny even} } H^{i} (\Fg_{1} \bowtie \Fg_{2}, \Fh_{2},\Cb) \cong HP^{\text{\tiny odd}} (\cH,^\sigma \bC _ \delta) \\
&\bigoplus_{i\,\text{\tiny odd} } H^{i} (\Fg_{1} \bowtie \Fg_{2}, \Fh_{2},\Cb) \cong HP^{\text{\tiny even}} (\cH,^\sigma \bC _ \delta)
\end{aligned}
\end{align}
if $\text{dim}(G_1)$ is odd.
\end{Theorem}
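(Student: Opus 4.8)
The plan is to realize $\Phi_D\circ\mathcal{D}$ as the composite $\Phi_D\circ\Theta\circ\mathcal{E}$ and to check that each factor is a quasi-isomorphism onto the next complex; beyond the results already proved, the only genuine work is the passage from the Hochschild-level statement of Lemma~\ref{Hoch-iso} to periodic cyclic cohomology. First I would pin down the source. Since $G_2=H_2\ltimes L_2$ yields $\Fg_2=\Fh_2\oplus\Fl_2$ as $\Fh_2$-modules and $\Fh_2\subset\Fg_2\subset\Fg_1\bowtie\Fg_2$, the quotient $(\Fg_1\bowtie\Fg_2)/\Fh_2$ is isomorphic to $\Fg_1\oplus\Fl_2$ as an $\Fh_2$-module — this is exactly where the hypotheses that $H_2$ is $G_1$-invariant and that $\Fh_2$ acts on $\Fg_1$ by derivations enter, making the adjoint action on the quotient linear — so the relative Chevalley--Eilenberg complex of $(\Fg_1\bowtie\Fg_2,\Fh_2)$ with trivial coefficients is precisely $(\wedge^{\bullet}(\Fg_{1}\oplus\Fl_{2})^{\ast})^{\Fh_{2}}$, the common domain of $\mathcal{E}$ and $\mathcal{D}$, whose total cohomology is $H^{\bullet}(\Fg_1\bowtie\Fg_2,\Fh_2;\bC)$.

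Second, $\mathcal{D}=\Theta\circ\mathcal{E}$, where $\mathcal{E}$ is a quasi-isomorphism by Proposition~\ref{Equasi} and $\Theta$ is an isomorphism of bicomplexes; hence $\mathcal{D}$ is a quasi-isomorphism from the relative Chevalley--Eilenberg complex onto the total complex of $(D^{p,q},d_1,d_2)$, and this map is degree-preserving, the Chevalley--Eilenberg degree $n$ corresponding to $p+q=n$.

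Third — the homological core — I would show that $\Phi_D$ identifies the total cohomology of $(D^{p,q},d_1,d_2)$ with $HP^{\bullet}(\cH;{}^{\sigma}\bC_{\delta})$. Recall $\Phi_D$ is a morphism of bicomplexes into the $(b,B)$-bicomplex of $\cH$, carrying $D^{p,q}$ into Hochschild degree $l=p+\dim(G_1)-q$, with $d_1$ matching $b$ and $d_2$ matching $B$; graded by $l$ both sides are mixed complexes and $\Phi_D$ a morphism of mixed complexes. By Lemma~\ref{Hoch-iso}, together with the identification $\Im(\lambda^{\natural})\cong\cH^{\natural}_{(\delta,\sigma)}$ of Corollary~\ref{representativecochain2}, $\Phi_D$ is a quasi-isomorphism on the underlying Hochschild complexes. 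I would then invoke the standard fact that a morphism of mixed complexes which is a Hochschild quasi-isomorphism induces isomorphisms on cyclic, negative cyclic and periodic cyclic cohomology — concretely, playing the two Connes $SBI$ long exact sequences against each other with the five lemma, using that $\Phi$ respects $S$, $B$ and $I$. The bound $0\le q\le\dim(G_1)$, which is the point at which the nilpotency of $L_2=\exp(\Fl_2)$ is used, guarantees that only finitely many de Rham degrees $q$ occur in each total degree, so the attendant spectral sequences converge and the $\mathbb Z/2$-folding of the relative Lie complex (finite-dimensional in each degree and bounded) loses no information.

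Composing, $\Phi_D\circ\mathcal{D}=\Phi_D\circ\Theta\circ\mathcal{E}$ is a quasi-isomorphism, and the bookkeeping is: a Chevalley--Eilenberg class in degree $n=p+q$ is carried into Hochschild, hence periodic cyclic, degree $l=p+\dim(G_1)-q\equiv n+\dim(G_1)\pmod 2$; collapsing to $\mathbb Z/2$ gives the two stated cases — parity preserved when $\dim(G_1)$ is even, reversed when $\dim(G_1)$ is odd — so that the even, respectively odd, part of $H^{\bullet}(\Fg_1\bowtie\Fg_2,\Fh_2;\bC)$ is isomorphic to $HP^{\text{\tiny even}}$ or $HP^{\text{\tiny odd}}(\cH;{}^{\sigma}\bC_{\delta})$. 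I expect the main obstacle to be exactly this promotion of Lemma~\ref{Hoch-iso} to periodic cyclic cohomology: one must verify that $\Phi_D$ genuinely intertwines the $(b,B)$ and $SBI$ structures after restriction to $D^{p,q}$ and under the identification with $\cH^{\natural}_{(\delta,\sigma)}$, and that the spectral-sequence and five-lemma comparison converges — which is precisely where the boundedness in $q$, and thus the nilpotency hypothesis on $L_2$, is indispensable; the remaining sign and degree bookkeeping producing the $\dim(G_1)$-shift is routine.
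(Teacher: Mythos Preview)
Your proposal is correct and follows essentially the same route as the paper: factor $\Phi_D \circ \mathcal{D} = \Phi_D \circ \Theta \circ \mathcal{E}$, invoke Proposition~\ref{Equasi} for $\mathcal{E}$, the isomorphism $\Theta$, and Lemma~\ref{Hoch-iso} for the Hochschild quasi-isomorphism, then promote to periodic cyclic cohomology (the paper asserts this passage in a single clause; you make the $SBI$/five-lemma mechanism explicit, which is a welcome clarification). One minor misattribution: the bound $0\le q\le\dim(G_1)$ comes simply from $\wedge^q\Fg_1^\ast=0$ for $q>\dim\Fg_1$, not from the nilpotency of $L_2$ --- nilpotency is used earlier, to guarantee $L_2=\exp(\Fl_2)$ and that the coefficient functions produced by $\mathcal{E}$ are polynomial and hence representative.
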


\begin{proof}
$ \Phi_{D}  \circ \Theta $ is a map between complexes $\{ \sum_{}C^{p}_{\cR}(G_2,\wedge ^{q} \Fg_1^{\ast}),d_1,d_2 \}$ and $\{CC^{\bullet}(\cH;^\sigma \bC _ \delta), b, B\}$. By Lemma~\ref{Hoch-iso}, The map $ \Phi_{D}  \circ \Theta $ induces an isomorphism on Hochschild cohomology, therefore it also induces an isomorphism on cyclic cohomology. By Proposition~\ref{Equasi}, $\mathcal{E}$ induces isomorphism between relative Lie cohomology of the pair $(\Fg_{1} \bowtie \Fg_{2}, \Fh_{2})$ and the cohomology of $(\sum_{p+q=n}C^{p}_{\cR}(G_2,\wedge ^{q} \Fg_1^{\ast}),d_1,d_2)$. Therefore the map $ \Phi_{D}  \circ \mathcal{D} = \Phi_{D}  \circ \Theta  \circ \mathcal{E}$ induces an isomorphism from the relative Lie cohomology of the pair $(\Fg_{1} \bowtie \Fg_{2}, \Fh_{2})$ to the periodic cyclic cohomology of $\cH=(\cU(\Fg_1)   \acr  \cR(G_2))^{\cop}$ with coefficient $^\sigma \bC _ \delta$.
The shift by the number $\text{dim}(G_1)$ is included in the formulation of $\Phi_{D}$ map.

\end{proof}

\section{Example construction and calculation}
\label{sec:examples}

\subsection{Real diamond group \texorpdfstring{$\Rb \ltimes H_{3}$}{D4} }

The so called real diamond Lie algebra is the 4-dimensional solvable Lie algebra $\Fd$ with basis $T$, $X$, $Y$, $Z$ satisfying the following commutation relations, see~\cite{brezin_unitary_1968,son_sur_1984}:
\begin{align*}
[T,X]=-Y,[T,Y]=X,[X,Y]=Z,
\end{align*}
this real diamond Lie algebra $\Fd=\Rb \ltimes \Fh_3$ is an extension of the one-dimensional Lie algebra $\Rb T$ by the Heisenberg algebra $\Fh_3$ with basis $X$, $Y$, $Z$.

We look at the corresponding Lie group $\Rb \ltimes H_{3}$. A group element is represented as $(\theta,x+\mathbf{i}y,z)$, while the group multiplication is given by :
\begin{equation*}
\begin{aligned}
&(\theta_1,x_1+\mathbf{i}y_1,z_1)\cdot(\theta_2,x_2+\mathbf{i}y_2,z_2) =\\
&\big(\theta_1+\theta_2,e^{-\mathbf{i}\theta_2}(x_1+\mathbf{i}y_1)+(x_2+\mathbf{i}y_2),z_1+z_2-\frac{1}{2}Im((x_1+\mathbf{i}y_1) e^{-\mathbf{i}\theta_2} (x_2-\mathbf{i}y_2) )\big)
\end{aligned}
\end{equation*}
and inverse is given by
\begin{align}
(\theta,x+\mathbf{i}y,z)^{-1}=(-\theta,-e^{\mathbf{i}\theta}(x+\mathbf{i}y),-z)
\end{align}
when $\theta=0$ we have the usual $H_{3}$. Denote $G_1=(\theta,0,0)$, $G_2=(0,x+\mathbf{i}y,z)$, we have
\begin{align}
(\theta,x+\mathbf{i}y,z)=(\theta,0,0)\cdot(0,x+\mathbf{i}y,z),\qquad \text{i.e.,} \quad G=G_1G_2.
\end{align}
Because
\begin{align}
(0,x+\mathbf{i}y,z_1) \cdot (\theta,0,0)= (\theta, e^{-\mathbf{i}\theta} (x+\mathbf{i}y) , z )=(\theta,0,0)\cdot(0,e^{-\mathbf{i}\theta} (x+\mathbf{i}y),z)
\end{align}
we have 
\begin{align}
\begin{aligned}
&G_2 \trt G_1 :(0,x+\mathbf{i}y,z) \trt  (\theta,0,0) = (\theta,0,0), \\
&G_2 \tlt G_1 : (0,x+\mathbf{i}y,z) \tlt  (\theta,0,0) = (0,e^{-\mathbf{i}\theta} (x+\mathbf{i}y),z)
\end{aligned}
\end{align}
Now we look at the Lie algebra:

\begin{align*}
\Fd \, = \,\{  T,X,Y,Z  \}= \,\{  e_1 ,e_2, e_3 ,e_4  \} \qquad
 \Fg_2= \,  \{ X,Y,Z\} \qquad
 \Fg_1= \,  \{ T \}.
\end{align*}
with Lie bracket given by
\begin{align*}
[T,X]=-Y,[T,Y]=X,[X,Y]=Z
\end{align*}
and actions:
\begin{align}
 \Fg_2 \trt  \Fg_1 :   \text{trivial action}  \qquad  \Fg_2 \tlt  \Fg_1: X \tlt T =Y, Y \tlt T =-X.
\end{align}

The cohomology ring of $H^{\ast}( \Fd )$ is generated by $\theta_1$, $\theta_2\wedge \theta_3\wedge \theta_4$ and $\theta_1\wedge \theta_2\wedge \theta_3\wedge \theta_4$: $H^{0}$ is one dimensional, generated by $\{1\}$, $H^{1}$ is one dimensional, generated by $\{\theta_1\}$, $H^{3}$ is one dimensional, generated by $\{ \theta_2\wedge \theta_3\wedge \theta_4 \}$, $H^{4}$ is one dimensional, generated by $\{ \theta_1\wedge \theta_2\wedge \theta_3\wedge \theta_4 \}$.

On $G_1$ we let $\theta_1=d\theta$. 

On $G_2$ we use the global exponential coordinate system $\left[ \begin {array}{ccc} 1&x&z+\frac{1}{2}\,xy\\ \noalign{\medskip}0&1&y
\\ \noalign{\medskip}0&0&1\end {array} \right] 
$ of $G_2$ to write $\theta_2=dx,\theta_3=dy,\theta_4=\frac{y}{2}dx-\frac{x}{2}dy+dz$.

Let $\vp=(\theta,0,0)$, $\psi=(0,x+\mathbf{i}y,z)$, we calculate 
\begin{align}
\nu(\vp,\psi)=\vp(\psi \tlt \vp)^{-1}&=\big(\theta,-e^{-\mathbf{i}\theta}(x+\mathbf{i}y),-z \big) \\
&=\big(\theta,(-\cos\theta x - \sin \theta y)+i(- \cos \theta y + \sin \theta x),-z \big) 
\end{align}

Now use~\ref{mumap} we calculate
\begin{align*}
\nu^\ast(\theta_1)=d\theta, \qquad \text {therefore,}\qquad \mu (\theta_1)=1 \otimes \theta_1
\end{align*}
\begin{align*}
\nu^\ast(\theta_2\wedge \theta_3\wedge \theta_4)=&\nu^\ast(dx\wedge dy \wedge dz) \\
=&d (-\cos\theta x - \sin \theta y) \wedge d (- \cos \theta y + \sin \theta x) \wedge dz\\
=&(\cos^2\theta+\sin^2\theta)dx \wedge dy \wedge dz \\
+ &(\sin^2 \theta x -\sin \theta \cos  \theta y+ \cos \theta \sin \theta y + \cos^2 \theta x) d \theta \wedge dx \wedge dz \\
+&(-\sin \theta \cos \theta x + \cos^2  \theta y+ \sin^2 \theta y + \cos \theta  \sin \theta x) d \theta \wedge dy \wedge dz\\
=&dx \wedge dy \wedge dz + x d \theta \wedge dx \wedge dz + y d \theta \wedge dy \wedge dz
\end{align*}
hence 
\begin{align*}
\mu_0 (\theta_2\wedge \theta_3\wedge \theta_4)=& dx \wedge dy \wedge dz \otimes 1 \\
\mu_1(\theta_2\wedge \theta_3\wedge \theta_4)=&(x  dx \wedge dz + y  dy \wedge dz )\otimes \theta_1
\end{align*}
\begin{align*}
\nu^\ast(\theta_1 \wedge \theta_2\wedge \theta_3\wedge \theta_4)=&\nu^\ast(d\theta \wedge dx\wedge dy \wedge dz) \\
=&d\theta  \wedge d (-\cos\theta x - \sin \theta y) \wedge d (- \cos \theta y + \sin \theta x) \wedge dz\\
=&(\cos^2\theta+\sin^2\theta)d\theta \wedge dx \wedge dy \wedge dz 
\end{align*}
hence 
\begin{align*}
\mu (\theta_1 \wedge \theta_2\wedge \theta_3\wedge \theta_4)=d \theta \wedge  dx \wedge dy \wedge dz \otimes 1 \end{align*}

Therefore the images of $\theta_1$ and $\theta_2\wedge \theta_3\wedge \theta_4$ give two even cocycles and the image of $1$,$\theta_1\wedge \theta_2\wedge \theta_3\wedge \theta_4$ give two odd cocycles. We take $\theta_2\wedge \theta_3\wedge \theta_4$ as an example.

\begin{align*} 
\mathcal{E} (\theta_2\wedge \theta_3\wedge \theta_4) (\psi_0,\dots,\psi_{3})=1 \cdot  \int\limits_{\Delta(\psi_0,\dots,\psi_{3})} dx \wedge dy \wedge dz
\end{align*}
where
\begin{align*} 
&dx=(x_1-x_0) dt_1 +(x_2-x_0) dt_2 +(x_3-x_0) dt_3, \\
&dy=(y_1-y_0) dt_1 +(y_2-y_0) dt_2 +(y_3-y_0) dt_3, \\
&dz=(z_1-z_0) dt_1 +(z_2-z_0) dt_2 +(z_3-z_0) dt_3.
\end{align*}
therefore the $D^{3,0}$ part of $\theta_2\wedge \theta_3\wedge \theta_4$ is
\begin{align*} 
\mathcal{E} (\theta_2\wedge \theta_3\wedge \theta_4) (\psi_0,\dots,\psi_{3})=\frac{1}{2}\sum_{\sigma \in S_3} (-1)^{\sigma} (x_{\sigma(1)}-x_0) (y_{\sigma(2)}-y_0) (z_{\sigma(3)}-z_0) 
\end{align*}
Now for the other component we have 
\begin{align*} 
\mathcal{E} (\theta_2\wedge \theta_3\wedge \theta_4) (\psi_0,\dots,\psi_{2})=\theta_1 \cdot  \int\limits_{\Delta(\psi_0,\dots,\psi_{2})} y \cdot dy \wedge dz x \cdot dx \wedge dz 
\end{align*}
where
\begin{align*} 
&dx=(x_1-x_0) dt_1 +(x_2-x_0) dt_2, \\
&dy=(y_1-y_0) dt_1 +(y_2-y_0) dt_2, \\
&dz=(z_1-z_0) dt_1 +(z_2-z_0) dt_2.
\end{align*}
therefore the $D^{2,1}$ part of $\theta_2\wedge \theta_3\wedge \theta_4$ is
\begin{align*} 
&\mathcal{E} (\theta_2\wedge \theta_3\wedge \theta_4) (\psi_0,\dots,\psi_{2})\\
=&\frac{1}{6}\theta_1 \cdot ((y_0^2+x_0^2)(z_1-z_2)+(y_1^2+x_1^2)(z_2-z_0)+(y_2^2+x_2^2)(z_0-z_1))
 \end{align*}

Use $\Phi$ to go direct from these two parts to the representative cochain on algebra $\cA=C_c^{\ify}(G_1)\rtimes G_2^{\delta}$, we have $\Phi(\theta_2 \wedge \theta_3 \wedge \theta_4) \in C^{4}(\cA) \cup C^{2}(\cA)$. 

As an example, the $C^{2}(\cA)$ part is:
\begin{align*}
&\Phi(\theta_2 \wedge \theta_3 \wedge \theta_4) (f_{0} U^{\ast}_{\psi_{0} },f_{1} U^{\ast}_{\psi_{1} },f_{2} U^{\ast}_{\psi_{2} })\\
=&\big((y_0^2+x_0^2)(z_1-z_2)+(y_1^2+x_1^2)(z_2-z_0)+(y_2^2+x_2^2)(z_0-z_1) \big) \int_{\Rb}f_0 f_{1}f_{2} \theta_1
\end{align*}
%\begin{align}
%\Phi(\theta_2 \wedge \theta_3 \wedge \theta_4) (f_{0} U^{\ast}_{\psi_{0} },f_{1} U^{\ast}_{\psi_{1} },f_{2} U^{\ast}_{\psi_{2} },f_{3} U^{\ast}_{\psi_{3} },f_{4} U^{\ast}_{\psi_{4} })&=
%\end{align}

\subsection{Other examples}
Other examples like higher diamond group can also be calculated explicitly. We can use mathematical software like Maple\texttrademark~to initiate example Lie groups and Lie algebras and calculate all the Lie algebra cohomology classes we want, and then implement $\mathcal{E}$, $\Theta$ and $\Phi$ maps as in the paper.

\section{Appendix}
\subsection{detail of step 1}
We give a detailed account of the transition from $CC^{\bullet}(\cR(G_2) \acl \cU(\Fg_1),^{\sigma^{-1}}\hspace{-3pt} \bC _ \delta)$ to $C^{\bullet,\bullet}(\wedge \Fg_1^{\ast},\otimes \cR(G_2))$ in step 1 in \autoref{subsec:bicomplexes}. We just restate what is proved in~\cite[4.2]{rangipour_van_2012}, under our setting.
\newline
Define a bi-cyclic complex $C^{\bullet,\bullet}(\cU,\cR,^{\sigma^{-1}}\hspace{-3pt}\bC _{\delta})$, where
\begin{equation*}
C^{p,q}(\cU,\cR,^{\sigma^{-1}}\hspace{-3pt} \bC _{\delta}):=^{\sigma^{-1}}\hspace{-3pt} \bC _{\delta}   \otimes \cR^{p } \otimes \cU^{q}, \qquad p,q \ge 0
\end{equation*}

\begin{equation}
\begin{tikzpicture}[description/.style={fill=white,inner sep=2pt}]
\matrix (m) [matrix of math nodes, row sep=3em, column sep=3em, 
text height=1.5ex, text depth=0.25ex]
{   \vdots & \vdots & \vdots    \\
     ^{\sigma^{-1}}\hspace{-3pt} \bC _{\delta}   \otimes  \cU^{\otimes 2}& ^{\sigma^{-1}}\hspace{-3pt} \bC _{\delta}   \otimes  \cU^{\otimes 2} \otimes \cR&  ^{\sigma^{-1}}\hspace{-3pt} \bC _{\delta}   \otimes  \cU^{\otimes 2} \otimes \cR^{\otimes 2}&  \cdots   \\ 
     ^{\sigma^{-1}}\hspace{-3pt} \bC _{\delta}   \otimes  \cU^{} & ^{\sigma^{-1}}\hspace{-3pt} \bC _{\delta}   \otimes  \cU^{} \otimes \cR&  ^{\sigma^{-1}}\hspace{-3pt} \bC _{\delta}   \otimes  \cU^{} \otimes \cR^{\otimes 2}&  \cdots   \\ 
    ^{\sigma^{-1}}\hspace{-3pt} \bC _{\delta}   & ^{\sigma^{-1}}\hspace{-3pt} \bC _{\delta}  \otimes \cR^{}& ^{\sigma^{-1}}\hspace{-3pt} \bC _{\delta}  \otimes \cR^{\otimes 2}&  \cdots   \\  };
  
\path[transform canvas={yshift=0.6ex},->,font=\scriptsize]
(m-2-1) edge node[above] {$ b_{\cR} $} (m-2-2)  (m-2-2) edge node[above] {$ b_{\cR} $} (m-2-3) (m-2-3) edge node[above] {$ b_{\cR} $} (m-2-4)  
(m-3-1) edge node[above] {$ b_{\cR} $} (m-3-2)  (m-3-2) edge node[above] {$ b_{\cR} $} (m-3-3) (m-3-3) edge node[above] {$ b_{\cR} $} (m-3-4)  
(m-4-1) edge  node[above] {$ b_{\cR} $} (m-4-2)  (m-4-2) edge node[above] {$ b_{\cR} $} (m-4-3) (m-4-3) edge node[above] {$ b_{\cR} $} (m-4-4)    ;

\path[transform canvas={xshift=-0.6ex},->,font=\scriptsize]
(m-4-1) edge node[left]{$b_{\cU}$} (m-3-1)  (m-4-2) edge node[left]{$b_{\cU}$} (m-3-2)   (m-4-3) edge node[left]{$b_{\cU}$} (m-3-3)  
(m-3-1) edge node[left]{$b_{\cU}$} (m-2-1)  (m-3-2) edge node[left]{$b_{\cU}$} (m-2-2)   (m-3-3) edge node[left]{$b_{\cU}$} (m-2-3) 
(m-2-1) edge node[left]{$b_{\cU}$} (m-1-1)  (m-2-2) edge node[left]{$b_{\cU}$} (m-1-2)   (m-2-3) edge node[left]{$b_{\cU}$} (m-1-3) ;

\path[transform canvas={yshift=-0.6ex},->,font=\scriptsize]
(m-2-2) edge node[below] {$ B_{\cR} $} (m-2-1)  (m-2-3) edge node[below] {$ B_{\cR} $} (m-2-2) (m-2-4) edge node[below] {$ B_{\cR} $} (m-2-3)  
(m-3-2) edge node[below] {$ B_{\cR} $} (m-3-1)  (m-3-3) edge node[below] {$ B_{\cR} $} (m-3-2) (m-3-4) edge node[below] {$ B_{\cR} $} (m-3-3)  
(m-4-2) edge node[below] {$ B_{\cR} $} (m-4-1)  (m-4-3) edge node[below] {$ B_{\cR} $} (m-4-2) (m-4-4) edge node[below] {$ B_{\cR} $} (m-4-3)    ;

\path[transform canvas={xshift=0.6ex},->,font=\scriptsize]
(m-1-1) edge node[right]{$B_{\cU}$} (m-2-1)  (m-1-2) edge node[right]{$B_{\cU}$} (m-2-2)   (m-1-3) edge node[right]{$B_{\cU}$} (m-2-3)  
(m-2-1) edge node[right]{$B_{\cU}$} (m-3-1)  (m-2-2) edge node[right]{$B_{\cU}$} (m-3-2)   (m-2-3) edge node[right]{$B_{\cU}$} (m-3-3) 
(m-3-1) edge node[right]{$B_{\cU}$} (m-4-1)  (m-3-2) edge node[right]{$B_{\cU}$} (m-4-2)   (m-3-3) edge node[right]{$B_{\cU}$} (m-4-3) ;

\end{tikzpicture} 
\end{equation}

We refer to~\cite[Sec.~3.2]{moscovici_hopf_2009} for the detail of the construction of the above complex and the proof of its bi-cyclicity. 

Next, we identify the diagonal of the above bi-cyclic complex $D^{\bullet}(\cU,\cR,^{\sigma^{-1}}\hspace{-3pt}\bC _{\delta})$ with the standard Hopf cyclic module $CC^{\bullet}(\cR(G_2)   \acl  \cU(\Fg_1) ,^{\sigma^{-1}}\hspace{-3pt}\bC _{\delta})$ by the map, which is similar to the one defined in~\cite[p.~766-767]{moscovici_hopf_2009} and~\cite[p.~502]{rangipour_van_2012}, $\Psi_{\aclsub}:CC^{\bullet}(\cR(G_2)   \acl  \cU(\Fg_1),^{\sigma^{-1}}\hspace{-3pt}\bC _{\delta}) \to D^{\bullet}(\cU,\cR,^{\sigma^{-1}}\hspace{-3pt}\bC _{\delta}) $:

\begin{equation*}
\begin{aligned}
\Psi_{\aclsub}^{}&(1 \otimes F^1 \acl u^1\otimes \cdots \otimes F^n \acl u^n) \\
=&1 \otimes F^1 \otimes F^2 S(u^1_{<n-1>})\otimes F^3 S(u^1_{<n-2>})S(u^2_{<n-2>})\otimes \cdots \\
&\otimes F^nS(u^1_{<1>})S(u^2_{<1>})\cdots S(u^{n-1}_{<1>}) \otimes u^1_{<0>}\otimes \cdots \otimes u^{n-1}_{<0>}\otimes u^{n},
\end{aligned}
\end{equation*}

\begin{equation*}
\begin{aligned}
\Psi_{\aclsub}^{-1}&(1 \otimes F^1\otimes \cdots \otimes F^n\otimes u^1 \otimes \cdots \otimes u^n) \\
&=1 \otimes F^1 \acl u^1_{<0>} \otimes F^2 u^1_{<1>} \acl u^2_{<0>}  \otimes  \cdots \otimes F^n u^1_{<n-1>} \cdots u^{n-1}_{<1>}  \acl u^n.
\end{aligned}
\end{equation*}
We relate $\text{Tot} C^{\bullet,\bullet}(\cU,\cR,^{\sigma^{-1}}\hspace{-3pt}\bC _{\delta})$ with the diagonal $D^{\bullet}(\cU,\cR,^{\sigma^{-1}}\hspace{-3pt}\bC _{\delta})$ by Alexander-Whitney map and shuffle map~(cf.~\cite{khalkhali_generalized_2004}):
\begin{align}
\begin{aligned}
&\qquad AW=\bigoplus_{p+q=n} AW_{p,q},\\
&AW_{p,q}:^{\sigma^{-1}}\hspace{-3pt} \bC _{\delta}   \otimes  \cU^{q} \otimes \cR^{p} \to ^{\sigma^{-1}}\hspace{-3pt} \bC _{\delta}   \otimes  \cU^{p+q} \otimes \cR^{p+q} \\
&AW_{p,q}=(-1)^{p+q} \uparrow\partial_{0}\cdots \uparrow\partial_{0} \overrightarrow{\partial}_{p+q}\cdots \overrightarrow{\partial}_{p+1},\\
&Sh=\sum_{\sigma \in Sh(p,q)} (-1)^{\sigma} \uparrow s_{\sigma(1)}\cdots \uparrow s_{\sigma(p)}\otimes \overrightarrow s_{\sigma(p+1)}\cdots \overrightarrow s_{\sigma(p+q)}.
\end{aligned}
\end{align}
We note that both Alexander-Whitney map and shuffle map are only chain maps between Hochschild complexes. The lack of explicit cyclic Alexander-Whitney map is the reason that the vertical map in diagram~\ref{diagram1} can not be inverted.\\
\newline
Follow the treatment in~\cite[Sec.~4.2]{rangipour_van_2012}, we can go from the complex $C^{p,q}(\cU,\cR,^{\sigma^{-1}}\hspace{-3pt}\bC _{\delta})$ to the quasi-isomorphic complex 
$C^{p,q}(\wedge \Fg_1^{},\otimes \cR(G_2),^{\sigma^{-1}}\hspace{-3pt}\bC _{\delta}):= ^{\sigma^{-1}}\hspace{-3pt}\bC _{\delta} \otimes\wedge^{q} \Fg_1^{}\otimes \cR(G_2)^{\otimes p}$ through antisymmetrization map. We can continue from the complex $C^{\bullet,\bullet}(\wedge \Fg_1^{},\otimes \cR(G_2),^{\sigma^{-1}}\hspace{-3pt}\bC _{\delta})$ to the quasi-isomorphic complex $C^{\bullet,\bullet}(\wedge \Fg_1^{\ast},\otimes \cR(G_2)):= \wedge^{\bullet} \Fg_1^{\ast}\otimes \cR(G_2)^{\otimes \bullet}$ by Poincar\'{e} isomorphism:\\

\begin{align}
\begin{aligned}
\mathfrak{D}&: \wedge^{\text{\tiny dim}(\Fg_1)}  \Fg_1^{\ast} \otimes      \wedge^{\text{\tiny dim}(\Fg_1)-q} \Fg_1            \otimes \cR(G_2)^{\otimes p}   \to  \wedge^{q} \Fg_1^{\ast}\otimes \cR(G_2)^{\otimes p}  \\
\mathfrak{D}& (\varpi \otimes \eta \otimes F^1 \otimes \cdots \otimes F^p) =  \iota ( \eta) \varpi \otimes F^1 \otimes \cdots \otimes F^p,
\end{aligned}
\end{align}
where $\varpi $ is a volume form and $\iota ( \eta) $ is the contraction by $\eta$.

As in~\cite[Sec.~4.2]{rangipour_van_2012} and~\cite[Sec.~1.3]{moscovici_hopf_2011}, we have the right coadjoint action of $\Fg_1$ on $\varpi $
\begin{align}
\text{ad}^{\ast}(Z) \varpi  = \delta(Z) \varpi, \qquad \forall Z \in \Fg_1,
\end{align}
and identify $\wedge^{\text{\tiny dim}(\Fg_1)}  \Fg_1^{\ast} $ with $\bC_{\delta}$ as right $\Fg_1$-modules. 

use~\ref{leftcoaction2} and~\ref{volumeform1} we have the left coaction of $\cR(G_2)$ on $\varpi ^{\ast}$
\begin{align}
\blacktriangledown (\varpi ) = \sigma^{-1} \otimes \varpi ,
\end{align}
and identify $\wedge^{\text{\tiny dim}(\Fg_1)}  \Fg_1^{\ast} $ with $^{\sigma^{-1}}\hspace{-3pt}\bC _{\delta}$ as right $\Fg_1$-module and left $\cR(G_2)$-comodule, or, at the same time, as right-left SAYD over $\cR(G_2)   \acl  \cU(\Fg_1)$.

\endgroup

\bibliographystyle{acm}
\bibliography{EHCCBP}
\nocite{knapp_lie_2002}
\nocite{helgason_differential_1978}
\nocite{moscovici_cyclic_2007}
\nocite{hochschild_cohomology_1962}
\nocite{getzler_cyclic_1993}
\nocite{dixmier_enveloping_1977}
%\nocite{*}
\end{document}